\newtheorem{thm}{Theorem}[section]
\newtheorem{cor}[thm]{Corollary}
\newtheorem{lem}[thm]{Lemma}
\newtheorem{prop}[thm]{Proposition}
\newtheorem{rem}[thm]{Remark}
\theoremstyle{definition}
\numberwithin{equation}{section}
\renewcommand{\Re}{\hbox{Re}\,}
\renewcommand{\Im}{\hbox{Im}\,}
\newcommand{\C}{\mathbb{C}}
\newcommand{\N}{\mathbb{N}}
\newcommand{\R}{\mathbb{R}}
\newcommand{\supp}{\operatorname{supp}}
\def\hat{\widehat}
\def\tilde{\widetilde}
\def \bfo {\begin {eqnarray*} }
\def \efo {\end {eqnarray*} }
\def \ba {\begin {eqnarray*} }
\def \ea {\end {eqnarray*} }
\def \beq {\begin {eqnarray}}
\def \eeq {\end {eqnarray}}
\def \supp {\hbox{supp }}
\def \p {\partial}
\def\hat{\widehat}
\def\tilde{\widetilde}
\def \bfo {\begin {eqnarray*} }
\def \efo {\end {eqnarray*} }
\def \ba {\begin {eqnarray*} }
\def \ea {\end {eqnarray*} }
\def \beq {\begin {eqnarray}}
\def \eeq {\end {eqnarray}}
\def \supp {\hbox{supp }}
\def \p {\partial}
\begin{document}

 \title[From Strichartz estimates to uniform $L^p$ resolvent estimates]{From semiclassical Strichartz estimates to uniform $L^p$ resolvent estimates on compact manifolds}

\author[Burq]{Nicolas Burq}

\address
        {N. Burq, D\'epartement de math\'ematiques\\ 
Universit\'e Paris-Sud\\  
 Bat. 425, 91405 Orsay Cedex \\
France}

\email{nicolas.burq@math.u-psud.fr}

\author[Dos Santos Ferreira]{David Dos Santos Ferreira}

\address
        {D. Dos Santos Ferreira, Institut \'Elie Cartan 
Universit\'e de Lorraine \\
B.P. 70239\\ 
F-54506 Vandoeuvre-l\`es-Nancy Cedex\\ 
France }

\email{ddsf@math.cnrs.fr}

\author[Krupchyk]{Katya Krupchyk}

\address
        {K. Krupchyk, Department of Mathematics\\
University of California, Irvine\\ 
CA 92697-3875, USA }

\email{katya.krupchyk@uci.edu}

\begin{abstract}

We prove uniform $L^p$ resolvent estimates for the stationary damped wave operator. The uniform $L^p$ resolvent estimates for  the Laplace operator on a compact smooth Riemannian manifold without boundary were first  established by Dos Santos Ferreira--Kenig--Salo \cite{DKS_resolvent} and advanced further by Bourgain--Shao--Sogge--Yao \cite{Bourgain_Shao_Sogge_Yao}. Here we provide an alternative proof relying on the techniques of semiclassical Strichartz estimates. This  approach allows us also to handle  non-self-adjoint perturbations of the Laplacian and embeds very naturally in the semiclassical spectral analysis framework. 

\end{abstract}

\maketitle

\section{Introduction and statement of result}

\label{sec_int}

Let $(M, g)$ be a compact smooth connected Riemannian manifold of dimension $n\ge 3$ without boundary, and let $-\Delta_g$ be the Laplace operator associated to the metric $g$.  The operator $-\Delta_g$ is self-adjoint on $L^2(M)$ with the domain $H^2(M)$, the standard Sobolev space on $M$, and it has a discrete spectrum $\text{Spec}(-\Delta_g)\subset[0,\infty)$. In \cite{DKS_resolvent}, see also \cite{Bourgain_Shao_Sogge_Yao} and  \cite{Shen_2001}, the following uniform resolvent estimates for  the Laplace operator were established. 

\begin{thm}
\label{thm_main}
Given $\delta>0$,
there exists a constant $C=C(\delta)>0$ such that  for all $u\in C^\infty(M)$ and all $\lambda\in \mathcal{R}_\delta$, we have
\begin{equation}
\label{eq_resolvent_est_laplacian}
\|u\|_{L^{\frac{2n}{n-2}}(M)}\le C\|(-\Delta_g-\lambda)u\|_{L^{\frac{2n}{n+2}}(M)},
\end{equation}
where
\begin{equation}
\label{eq_region_R_delta}
\mathcal{R}_\delta=\{\lambda\in \C: (\emph{\Im} \lambda)^2\ge 4\delta^2(\emph{\textrm{Re}}\,\lambda+\delta^2)\}.
\end{equation}
\end{thm}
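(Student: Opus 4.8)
The plan is to reduce the uniform $L^p$ resolvent estimate to a semiclassical estimate and then prove the latter using semiclassical Strichartz estimates. Write $\lambda = z/h^2$ where $h>0$ is a small parameter and $z$ lies in a fixed compact set with $\Re z$ near $1$; the region $\mathcal{R}_\delta$ corresponds, after this rescaling, to $|\Im z| \gtrsim h$ (with an appropriate uniform bound), and one checks that $\Re z \le 0$ or $|z|$ large can be treated by easy elliptic estimates. Multiplying \eqref{eq_resolvent_est_laplacian} through by $h^2$, the estimate becomes
\begin{equation*}
\|u\|_{L^{\frac{2n}{n-2}}(M)} \le \frac{C}{h^2}\,\|(-h^2\Delta_g - z)u\|_{L^{\frac{2n}{n+2}}(M)},
\end{equation*}
so by duality and a $TT^*$-type argument it suffices to establish a semiclassical smoothing/resolvent bound of the form $\|(-h^2\Delta_g - z)^{-1}\|_{L^{\frac{2n}{n+2}} \to L^{\frac{2n}{n-2}}} \lesssim h^{-2}$ for $z$ in the relevant range, uniformly in $h$.

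The key step is to represent the resolvent via the semiclassical Schrödinger propagator: formally, for $\Im z > 0$,
\begin{equation*}
(-h^2\Delta_g - z)^{-1} = \frac{i}{h}\int_0^{\infty} e^{it(h^2\Delta_g + z)/h}\, dt,
\end{equation*}
and one splits the time integral into a short-time piece $t \in [0, h\,T_0]$ and a long-time piece $t \ge h\,T_0$. On the short-time regime, after further rescaling $t = h\tau$, the propagator $e^{i\tau(h\Delta_g)}$ is handled by the semiclassical Strichartz estimates on the manifold (valid on time intervals of size $O(1)$ in $\tau$), which, combined with the Keel--Tao machinery and Sobolev embedding at the endpoint exponents, give exactly the mapping property $L^{\frac{2n}{n+2}} \to L^{\frac{2n}{n-2}}$ with the correct power of $h$. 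The long-time piece is where the damping $\Im z \gtrsim h$ enters: the factor $e^{-t\,\Im z/h}$ provides exponential decay on the scale $t \gtrsim h$, so the long-time contribution is bounded by an $L^2 \to L^2$ estimate times $\int_{hT_0}^\infty e^{-t\Im z/h}\,dt \lesssim (\Im z)^{-1} \lesssim h^{-1}$, together with the trivial $L^2$-based Sobolev bounds on the spatial side; one must check these combine to the claimed $h^{-2}$.

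The main obstacle I expect is the short-time analysis: semiclassical Strichartz estimates on a compact manifold hold only locally in time (there is an unavoidable loss or a restriction to $|\tau| \lesssim 1$ coming from the geometry and the parametrix construction for $e^{i\tau h \Delta_g}$), so one must carefully track how the endpoint Strichartz pair $(p,q)$ with the Sobolev embedding $\dot{W}^{1,\frac{2n}{n+2}} \hookrightarrow$ (Strichartz space) and its dual produce precisely the exponent pair $\big(\frac{2n}{n+2}, \frac{2n}{n-2}\big)$ and precisely the power $h^{-2}$, with no logarithmic loss. A secondary technical point is patching the local-in-time parametrix estimates into a global statement on $M$ via a partition of unity and finite propagation speed / non-stationary phase for the "off-diagonal" error terms, and verifying uniformity as $\Im z \to 0$ within $\mathcal{R}_\delta$ — i.e. that the short-time piece alone, without help from the damping, already yields the estimate when $\Im z \sim h$, matching the known critical threshold of \cite{DKS_resolvent} and \cite{Bourgain_Shao_Sogge_Yao}.
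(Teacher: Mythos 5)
Your proposal follows a genuinely different route from the paper. Where the paper microlocalizes near the characteristic variety $p_0=1$, factors $p_0-1=e\,(\xi_1-a(x,\xi'))$ in a coordinate chart, and treats $x_1$ as ``time'' so that the evolution takes place in $n-1$ spatial variables over a \emph{bounded} $x_1$-interval, you instead use the resolvent integral $(-h^2\Delta_g-z)^{-1}=\frac{-i}{h}\int_0^\infty e^{it(h^2\Delta_g+z)/h}\,dt$ and apply Strichartz estimates to the honest Schr\"odinger flow $e^{ith\Delta_g}$ in $n$ spatial dimensions over an infinite time interval. This is closer in flavor to the Bourgain--Shao--Sogge--Yao proof (via $\cos(t\sqrt{-\Delta_g})$) than to the present paper's argument, and is not just a cosmetic rephrasing.

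There is a genuine gap in the long-time piece. You estimate the contribution of $t\gtrsim h$ by ``$L^2\to L^2$ estimate times $\int_{hT_0}^\infty e^{-t\Im z/h}\,dt\lesssim h^{-1}$, together with the trivial $L^2$-based Sobolev bounds.'' But since $e^{ith\Delta_g}$ commutes with $\Delta_g$ and does not smooth, the classical Sobolev embeddings $L^{\frac{2n}{n+2}}\hookrightarrow H^{-1}$ and $H^1\hookrightarrow L^{\frac{2n}{n-2}}$ each cost $h^{-1}$ after semiclassical rescaling on the frequency-localized piece, yielding $\frac{1}{h}\cdot h^{-1}\cdot h^{-1}\cdot\frac{h}{\Im z}\sim h^{-3}$ when $\Im z\sim h$ — one power of $h$ too large. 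To recover the claimed $h^{-2}$ you actually need the Sogge spectral cluster estimate $\|\chi(-h^2\Delta_g)\|_{L^2\to L^{\frac{2n}{n-2}}}\lesssim h^{-1/2}$ and its dual, which are \emph{not} the trivial Sobolev bounds and constitute a substantive extra ingredient. (They are derivable from the short-time semiclassical Strichartz parametrix, e.g. the $L^\infty_tL^q_x$ admissible pair, but the point is that they must be invoked explicitly; ``trivial'' bounds fail.) You also implicitly need the Keel--Tao endpoint pair $(p,q)=(2,\frac{2n}{n-2})$ for the $n$-dimensional flow; the paper deliberately sidesteps this by reducing to an $(n-1)$-dimensional evolution where the relevant pair $(\frac{2n}{n-1},\frac{2n}{n-2})$ has $p>2$, and the a priori $L^2$ bound $\|u\|_{L^2}^2\lesssim \frac{h}{\delta}\|f\|_{L^{2n/(n+2)}}\|u\|_{L^{2n/(n-2)}}$ together with a compact $x_1$-range eliminates any long-time issue from the outset.

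In short: your approach is plausible if you add spectral cluster estimates and the Keel--Tao endpoint to your toolbox and carefully track the geometric-series sum in $k$ over time intervals of size $\epsilon$ (with the decay $e^{-2\delta k\epsilon}$ supplying convergence), but as written the long-time argument does not close at the claimed exponent. The paper's factorization approach avoids both of these difficulties by turning the global-in-time problem into a local-in-space one.
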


Notice that $\mathcal{R}_\delta$ is the exterior of a parabolic region, containing the spectrum of $-\Delta_g$,   see Figure \ref{pic_laplacian}. Letting $\lambda=z^2$, $\Im z>0$, we observe that the region $\mathcal{R}_\delta$ is the image of the region $\Xi_\delta=\{z\in \C: \Im z\ge \delta\}$ under the map $z\mapsto z^2$, and the estimate  \eqref{eq_resolvent_est_laplacian} is equivalent to 
\begin{equation}
\label{eq_resolvent_est_laplacian_z} 
\|u\|_{L^{\frac{2n}{n-2}}(M)}\le C \|(-\Delta_g-z^2)u\|_{L^{\frac{2n}{n+2}}(M)}, \quad z\in\Xi_\delta,
\end{equation}
see Figure \ref{pic_laplacian}. 
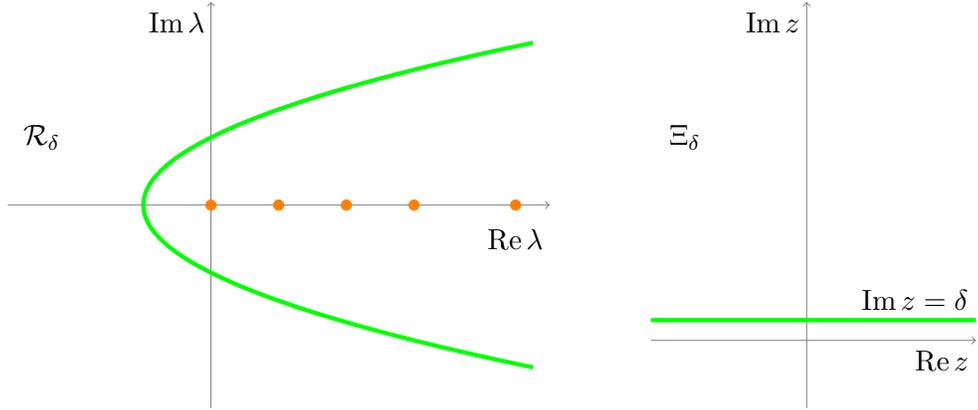
\begin{figure} [ht]
\centering
\begin{tikzpicture}[scale=.9]
\draw [help lines, ->] (-1,2) -- (7,2);
\draw [help lines,  ->] (2,-1) -- (2,5);
\draw[green, ultra thick, domain=2:4.4] plot ({(\x-2)*(\x-2)+1}, \x);
\draw[green, ultra thick, domain=2:4.4] plot ({(\x-2)*(\x-2)+1}, 4-\x);
\node at (6.5,1.5) {$\Re \lambda$};
\node at (1.5, 4.7) {$\Im  \lambda$};
\node at (-0.5, 3) {$\mathcal{R}_\delta$};
\fill[orange] (2,2) circle (0.5ex);
\fill[orange] (3,2) circle (0.5 ex);
\fill[orange] (4,2) circle (0.5 ex);
\fill[orange] (5,2) circle (0.5 ex);
\fill[orange] (6.5,2) circle (0.5 ex);
\draw [help lines, ->] (8.5,0) -- (13.3,0);
\draw [help lines,  ->] (10.8,-1) -- (10.8,5);
\draw [green,  ultra thick, -] (8.5,0.3) -- (13.3,0.3);
\node at (12.8,-0.3) {$\Re z$};
\node at (10.3, 4.7) {$\Im z$};
\node at (9, 3) {$\Xi_\delta$};
\node at (12.4,0.6) {$\Im z=\delta$};
\end{tikzpicture}
\caption{The spectral regions $\mathcal{R}_\delta$ and $\Xi_\delta$  the uniform resolvent bound \eqref{eq_resolvent_est_laplacian} and \eqref{eq_resolvent_est_laplacian_z}, respectively.}
\label{pic_laplacian}
\end{figure}

The work  \cite{Bourgain_Shao_Sogge_Yao} has addressed the very interesting question of the optimality of the spectral region 
$\mathcal{R}_\delta$, for the uniform estimate \eqref{eq_resolvent_est_laplacian} to hold, revealing that any sharpening in the spectral region is related to improvements in the remainder estimates in the Weyl law for the Laplacian. Thus, in \cite{Bourgain_Shao_Sogge_Yao} it was shown 
that the parabolic region $\mathcal{R}_\delta$ cannot be improved when $M$  is  the standard sphere, or more generally,  a Zoll manifold, 
due to  a cluster structure of the spectrum of $-\Delta_g$ on such manifolds, \cite{Weinstein_1977}. Exploiting the link to the Weyl law, 
the work \cite{Bourgain_Shao_Sogge_Yao} also obtained an improvement  over the spectral region in the case of manifolds of nonpositive
sectional curvature and in the case of flat tori.

In \cite{Uhlmann_Krupchyk_res} the uniform resolvent estimates \eqref{eq_resolvent_est_laplacian}  were extended to the case of higher order elliptic self-adjoint differential operators. 

The uniform $L^p$ resolvent estimates \eqref{eq_resolvent_est_laplacian}  have turned out to be  a very useful tool in the study of the structure of the spectrum of the periodic Schr\"odinger operator  with an unbounded potential, see \cite{Shen_2001} and \cite{Uhlmann_Krupchyk_abs}.  Such estimates are also crucial in solving inverse boundary value problems for Schr\"odinger operators  with unbounded potentials, see \cite{DKS_inverse} and \cite{Uhlmann_Krupchyk_inverse}. Uniform resolvent estimates in various $L^p$ spaces are also of great importance in control theory, see \cite{Bourgain_Burq_Zworski}.   

To state the main result of this paper, let us consider the following damped wave equation, 
\[
(\p_t^2-\Delta_g+2a(x)\p_t) v(x,t)=0, \quad (x,t)\in M\times\R,
\]  
arising in control theory, see \cite{Lebeau_1996}. Here $a\in C^\infty(M; \R)$ is the damping coefficient. Searching for solutions of the form $v(x,t)=e^{it\tau}u(x)$, $\tau\in \C$, we are led to the corresponding non-self-adjoint spectral  problem,
\begin{equation}
\label{eq_itro_damped_wave}
P(\tau)u:=(-\Delta_g+2i a(x)\tau -\tau^2)u(x)=0.
\end{equation}
We say that $\tau\in \C$ is an eigenvalue of $P(\tau)$, if there exists a corresponding non-trivial function $u \in L^2(M)$ 
such that $P(\tau)u = 0$.  

Using an integration by parts argument, one can easily see that the eigenvalues of $P(\tau)$ are confined to a band parallel to the real axis, see \cite{Sjostrand_2000}. More precisely, if $\tau$ is an eigenvalue, then we have 
\begin{equation}
\label{eq_intro_localization_eigenvalues}
\begin{aligned}
\inf a\le &\Im \tau\le \sup a,\quad \text{when}\quad \Re\tau\ne 0,\\
2\min(\inf a,0)\le &\Im \tau\le 2 \max (\sup a,0),\quad  \text{when}\quad \Re\tau= 0.
\end{aligned}
\end{equation} 
We shall denote the set of the eigenvalues of $P(\tau)$ by $\text{Spec}(P(\tau))$.  Using the analytic Fredholm theory, applied to the family of the operators $P(\tau):H^2(M)\to L^2(M)$, we see that the set $\text{Spec}(P(\tau))$ is discrete.  Furthermore, since \eqref{eq_itro_damped_wave} is invariant under the map $(\tau,u)\mapsto (-\overline{\tau},\overline{u})$, the eigenvalues are located symmetrically around the imaginary axis.

It turns out that the bounds \eqref{eq_intro_localization_eigenvalues}  on the imaginary parts of the eigenvalues of $P(\tau)$ can be sharpened in the regime when $|\Re \tau|$ is large, as has been established in \cite{Lebeau_1996},   \cite{Sjostrand_2000}, \cite{Koch_Tataru_1995}. Since the sharpened bounds are of major importance to us, we shall now recall the precise statement. First let us introduce some notation. Let $p(x,\xi)=|\xi|^2_g$ be the principal symbol of $-\Delta_g$, defined on $T^*M$, and let $H_p$ be the corresponding Hamilton vector field.  The Hamilton vector field $H_p$  generates the flow $\text{exp}(tH_p):T^*M \to T^*M$, $t\in \R$,  which is called the Hamilton flow of $p$, and which in this case can be identified with the geodesic flow, see \cite[Chapter 4]{Sternberng_book}. For $T>0$, let $\langle a\rangle_T$ denote the symmetric time $2T$ average of $a$ along the $H_p$--flow, 
\[
\langle a\rangle_T(x,\xi)=\frac{1}{2T}\int_{-T}^T a\circ \text{exp}(tH_p)(x,\xi)dt, \quad (x,\xi)\in T^*M.
\]
It was shown in \cite{Lebeau_1996} and \cite[Appendix A]{Sjostrand_1996} that 
\begin{align*}
A_+:&=\inf_{T>0}\sup_{p^{-1}(1)}\langle a\rangle_T=\lim_{T\to \infty}\sup_{p^{-1}(1)}\langle a\rangle_T,\\
A_-:&=\sup_{T>0}\inf_{p^{-1}(1)}\langle a\rangle_T=\lim_{T\to \infty}\inf_{p^{-1}(1)}\langle a\rangle_T.
\end{align*}
\begin{thm}[\cite{Lebeau_1996},  \cite{Sjostrand_2000},  \cite{Koch_Tataru_1995}] 
\label{thm_Lebeau_Sjostrand}
For every $\varepsilon>0$, there are at most finitely many eigenvalues of $P(\tau)$ outside the strip $\R+i(A_--\varepsilon, A_++\varepsilon)$.
\end{thm}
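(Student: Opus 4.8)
The plan is to argue by contradiction, extracting from a hypothetical sequence of eigenvalues escaping the strip a nonzero positive measure on $p^{-1}(1)$ that obeys a transport equation along the geodesic flow with source term $4(a-\mu_\infty)$, and then showing that this equation forces the measure to vanish as soon as $\mu_\infty\notin[A_-,A_+]$. Concretely, suppose $P(\tau)$ had infinitely many eigenvalues outside $\R+i(A_--\varepsilon,A_++\varepsilon)$. Since $\mathrm{Spec}(P(\tau))$ is discrete and, by \eqref{eq_intro_localization_eigenvalues}, its elements have uniformly bounded imaginary parts, such eigenvalues $\tau_j$ must have $|\mathrm{Re}\,\tau_j|\to\infty$; using the symmetry $(\tau,u)\mapsto(-\overline\tau,\overline u)$ one may assume $\lambda_j:=\mathrm{Re}\,\tau_j\to+\infty$, and, after extracting a subsequence, that $\mu_j:=\mathrm{Im}\,\tau_j\to\mu_\infty$ with $\mu_\infty\ge A_++\varepsilon$, say (the case $\mu_\infty\le A_--\varepsilon$ being symmetric, with backward time averages replacing forward ones below). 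Set $h_j=1/\lambda_j\to0^+$ and pick $u_j\in C^\infty(M)$, $\|u_j\|_{L^2(M)}=1$, with $P(\tau_j)u_j=0$. The expansion
\[
h_j^2P(\tau_j)=(-h_j^2\Delta_g-1)+2ih_j(a-\mu_j)+h_j^2(\mu_j^2-2a\mu_j)
\]
then gives $(-h_j^2\Delta_g-1)u_j=-2ih_j(a-\mu_j)u_j+\mathcal{O}_{L^2}(h_j^2)$; in particular $u_j$ is a quasimode, $(-h_j^2\Delta_g-1)u_j=\mathcal{O}_{L^2}(h_j)$.

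Next I would pass to a semiclassical defect measure $\nu$ of $(u_j)$ on $T^*M$ (with $\mathrm{Op}_h$ a semiclassical quantization). Ellipticity of the symbol $|\xi|_g^2-1$ away from $\{|\xi|_g=1\}$, together with the quasimode bound, localizes $\nu$ to $p^{-1}(1)$, and comparison with $\|u_j\|_{L^2(M)}=1$ shows $\nu$ carries the full mass, $\nu(p^{-1}(1))=1$; in particular $\nu\ne0$. For $q\in C^\infty(p^{-1}(1))$, extended to a compactly supported symbol on $T^*M$, I would evaluate $\tfrac{i}{h_j}\langle[-h_j^2\Delta_g-1,\mathrm{Op}_{h_j}(q)]u_j,u_j\rangle$ in two ways: by the symbol calculus it equals $\langle\mathrm{Op}_{h_j}(H_pq)u_j,u_j\rangle+\mathcal{O}(h_j)$, while expanding the commutator via self-adjointness of $-h_j^2\Delta_g-1$ and substituting the quasimode equation it equals $-4\langle\mathrm{Op}_{h_j}((a-\mu_j)q)u_j,u_j\rangle+\mathcal{O}(h_j)$. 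Letting $j\to\infty$ (so $\mu_j\to\mu_\infty$) yields the transport equation
\[
\int_{p^{-1}(1)}\bigl(H_pq+4(a-\mu_\infty)q\bigr)\,d\nu=0\qquad\text{for all }q\in C^\infty(p^{-1}(1)).
\]

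To close the argument I would exploit $\mu_\infty>A_+$. By the definition of $A_+$ there is $T_0>0$ with $\sup_{p^{-1}(1)}\langle a\rangle_{T_0}<\mu_\infty$; with $T:=2T_0$, shifting the integration variable and using the invariance of $p^{-1}(1)$ under $\exp(tH_p)$ gives $\sup_{\rho\in p^{-1}(1)}\tfrac1T\int_0^Ta(\exp(sH_p)\rho)\,ds=\sup_{p^{-1}(1)}\langle a\rangle_{T_0}\le\mu_\infty-\eta$ for some $\eta>0$. Introduce the smooth function on $p^{-1}(1)$
\[
q_T(\rho)=-\int_0^T\exp\Bigl(4\int_0^u\bigl(a(\exp(sH_p)\rho)-\mu_\infty\bigr)\,ds\Bigr)\,du
\]
(smooth since the integrals run over a compact set). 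Differentiating along the flow and using that $\partial_u$ of the inner exponential is $4(a(\exp(uH_p)\rho)-\mu_\infty)$ times itself, one finds $H_pq_T+4(a-\mu_\infty)q_T=1-\Psi_T$ with $\Psi_T(\rho)=\exp\bigl(4\int_0^T(a(\exp(sH_p)\rho)-\mu_\infty)\,ds\bigr)\le e^{-4\eta T}<1$. Testing the transport equation against $q=q_T$ gives $\nu(p^{-1}(1))=\int_{p^{-1}(1)}\Psi_T\,d\nu\le e^{-4\eta T}\,\nu(p^{-1}(1))$, hence $\nu(p^{-1}(1))=0$, contradicting $\nu(p^{-1}(1))=1$. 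This proves the claim.

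The main obstacles I anticipate are, first, verifying that the defect measure retains the entire $L^2$-mass and is supported exactly on $p^{-1}(1)$ — no mass escaping to $\xi=0$ or to fibre infinity — which is precisely where the ellipticity of $-h^2\Delta_g-1$ off its characteristic variety must be used; and second, pinning down the source term in the transport equation, both its constant $4$ and, above all, the sign of $a-\mu_\infty$, since it is this sign that makes $q_T$ a bounded smooth function and drives the contradiction. The reduction from the symmetric time averages $\langle a\rangle_T$ defining $A_\pm$ to the one-sided Cesàro averages appearing in $q_T$ is elementary, as sketched above, and the substantive ergodic input that $\sup_{p^{-1}(1)}\langle a\rangle_T\to A_+$ is already available from the references cited before the statement.
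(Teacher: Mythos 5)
The paper does not prove this theorem itself — it cites \cite{Lebeau_1996}, \cite{Sjostrand_2000}, \cite{Koch_Tataru_1995} — so there is no internal proof to compare against. Your argument is correct and self-contained, and it takes the semiclassical defect measure route rather than the pseudodifferential conjugation route of \cite{Sjostrand_2000} (which is the method this paper itself adapts in Section \ref{sec_Damped} to prove the resolvent bounds). The computations all check out: the rescaled equation $h_j^2P(\tau_j)=(-h_j^2\Delta_g-1)+2ih_j(a-\mu_j)+\mathcal{O}(h_j^2)$ is right, the two evaluations of $\tfrac{i}{h_j}\langle[-h_j^2\Delta_g-1,\mathrm{Op}_{h_j}(q)]u_j,u_j\rangle$ do give $\int H_pq\,d\nu$ and $-4\int(a-\mu_\infty)q\,d\nu$ respectively (the factor $4$ comes from the $2$ in $2ih_j(a-\mu_j)$ times the $2$ from symmetrizing $(a-\mu_j)Q+Q(a-\mu_j)$), the integrating-factor identity $H_pq_T+4(a-\mu_\infty)q_T=1-\Psi_T$ follows by differentiating along the flow, and the full-mass claim $\nu(p^{-1}(1))=1$ is secured by ellipticity of $|\xi|_g^2-1$ at fibre infinity ($\|h_j\nabla u_j\|\to1$ gives $h$-oscillation). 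One small remark: for the case $\mu_\infty\le A_--\varepsilon$ you do not actually need to switch to backward-time averages — the same forward $q_T$ yields $\Psi_T\ge e^{4\eta T}>1$, and $\nu(p^{-1}(1))=\int\Psi_T\,d\nu\ge e^{4\eta T}\nu(p^{-1}(1))$ forces $\nu=0$ just as well. The trade-off between the two approaches is the usual one: the defect-measure argument is softer and shorter but purely qualitative (a contradiction argument with no explicit bound on $L$ or on resolvent norms), whereas Sjöstrand's conjugation $Q^{-1}h^2P(\tau)Q$ by $Q=\mathrm{Op}_h^w(e^q)$ with $H_pq$ cancelling the oscillation of $a$ is quantitative, and it is precisely the quantitative version that the present paper needs in order to feed into the Strichartz machinery and prove the uniform $L^p\to L^{p'}$ resolvent bound.
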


In general we have $A_+\le \sup a$, $A_- \ge \inf a$, and the latter inequality becomes strict, for instance when 
$a\ge 0$, $\inf a=0$, and the geometric control condition of Rauch and Taylor holds. Here we say that the geometric control condition holds if there exists a time $T_0>0$ such that any geodesic of length $\ge T_0$ meets the open set 
$\{x\in M: a(x)>0\}$, see \cite{Rauch_Taylor_1975}. 

Our main result is the following generalization of the uniform estimate \eqref{eq_resolvent_est_laplacian_z} to the stationary damped wave operator.
\begin{thm}
\label{thm_main_2}
For any $\delta>0$, there exists a constant $L>0$, such that for any neighborhood $V$ of the set $\emph{\text{Spec}}(P(\tau))\cap \{\tau\in \C: |\emph{\Re}\tau|\le L\}$,  there exists a constant $C=C(\delta, V)>0$ such that we have the uniform estimate, 
\begin{equation}
\label{eq_resolvent_est_damped_wave_equation}
\|u\|_{L^{\frac{2n}{n-2}}(M)}\le C\|P(\tau)u\|_{L^{\frac{2n}{n+2}}(M)},
\end{equation}
for all $u\in C^\infty(M)$ and all  $\tau\in\Pi_{\delta,V}$. Here 
\begin{equation}
\label{eq_resolvent_est_damped_wave_equation_region}
\begin{aligned}
\Pi_{\delta,V}=&\{\tau\in \C: \emph{\Im} \tau\ge A_++\delta, |\emph{\Re} \tau|\ge L\}\\
&\cup \{\tau\in \C: \emph{\Im} \tau\le A_--\delta, |\emph{\Re} \tau|\ge L\}\\
&\cup 
\{\tau\in \C: |\emph{\Re}\tau|\le L\}\setminus V.
\end{aligned}
\end{equation}
\end{thm}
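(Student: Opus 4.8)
The plan is to split the region $\Pi_{\delta,V}$ into a low-frequency part $\{|\Re\tau|\le L\}\setminus V$ and the high-frequency parts $\{|\Re\tau|\ge L\}$, with $L>0$ fixed large (depending only on $\delta$) during the high-frequency analysis and $V$, hence the low-frequency constant, prescribed only afterwards. On $\{|\Re\tau|\le L\}\setminus V$ I would argue by compactness. The operator $P(\tau)\colon H^2(M)\to L^2(M)$ is a holomorphic family of second-order elliptic operators, so by elliptic regularity it is Fredholm of index $0$ also as a map $W^{2,\frac{2n}{n+2}}(M)\to L^{\frac{2n}{n+2}}(M)$, invertible there exactly when $\tau\notin\text{Spec}(P(\tau))$; composing the inverse with the Sobolev embedding $W^{2,\frac{2n}{n+2}}(M)\hookrightarrow L^{\frac{2n}{n-2}}(M)$ gives \eqref{eq_resolvent_est_damped_wave_equation} for each such $\tau$ with a $\tau$-dependent constant. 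By \eqref{eq_intro_localization_eigenvalues} the eigenvalues in $\{|\Re\tau|\le L\}$ lie in a bounded, hence — since $\text{Spec}(P(\tau))$ is discrete — finite, set, so $\{|\Re\tau|\le L\}\setminus V$ is compact, $\tau\mapsto P(\tau)^{-1}$ is norm-continuous on it from $L^{\frac{2n}{n+2}}(M)$ to $L^{\frac{2n}{n-2}}(M)$, and the bound is uniform there.

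For $|\Re\tau|\ge L$ I would pass to a semiclassical scale. Set $h=1/|\Re\tau|\in(0,1/L]$, $\epsilon=\operatorname{sgn}(\Re\tau)$ and $\gamma=\Im\tau$; since $h\tau=\epsilon+ih\gamma$, multiplying \eqref{eq_itro_damped_wave} by $h^2$ yields
\[
h^2P(\tau)=-h^2\Delta_g-(h\tau)^2+2i\epsilon h\,a(x)(h\tau)=-h^2\Delta_g-1+2i\epsilon h\bigl(a(x)-\gamma\bigr)+h^2\gamma\bigl(\gamma-2a(x)\bigr),
\]
and, the $L^p(M)$ norms being unaffected by this rescaling, \eqref{eq_resolvent_est_damped_wave_equation} is equivalent to $\|u\|_{L^{\frac{2n}{n-2}}(M)}\le Ch^{-2}\|h^2P(\tau)u\|_{L^{\frac{2n}{n+2}}(M)}$, to be proved uniformly in $h$ and $\gamma$. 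If $z:=(h\tau)^2$ satisfies $|z-1|\ge c$ (equivalently $|\Im\tau|\gtrsim|\Re\tau|$), then $z$ lies at distance $\gtrsim|z|$ from $[0,\infty)$, $-h^2\Delta_g-z$ is essentially elliptic (equivalently, sectorial after undoing the scale), and the estimate follows from the semiclassical elliptic estimate, the Sobolev embedding, and a Neumann series absorbing the damping term $2i\epsilon h\,a(x)(h\tau)$, which has relative size $O(h)$ with respect to $-h^2\Delta_g-z$. The substance is the region $|z-1|\le c$, i.e.\ $z$ close to $1$.

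There, writing $-h^2\Delta_g-1=h^2(\Lambda-h^{-1})(\Lambda+h^{-1})$ with $\Lambda=\sqrt{-\Delta_g}$ and localizing to frequencies $\Lambda\sim h^{-1}$, one reduces to estimating the resolvent of the dissipative-type operator $\Lambda-h^{-1}+i\epsilon\bigl(a(x)-\gamma\bigr)$ (modulo lower-order terms) as a map $L^{\frac{2n}{n+2}}(M)\to L^{\frac{2n}{n-2}}(M)$ with loss $h^{-1}$. I would obtain this by writing the resolvent as a time integral of the half-wave propagator $e^{it\Lambda}$ cut off to a fixed interval $|t|\le T$: the $L^{\frac{2n}{n+2}}\to L^{\frac{2n}{n-2}}$ bound for this truncated propagator is a local-in-time semiclassical Strichartz estimate (equivalently, a spectral cluster bound) on $(M,g)$, in the spirit of Burq--G\'erard--Tzvetkov and Koch--Tataru, and it produces exactly the loss $h^{-1}$; the non-self-adjoint term contributes over $|t|\le T$ a weight controlled, via Egorov's theorem, by the symmetric time-$2T$ average $\langle a-\gamma\rangle_T=\langle a\rangle_T-\gamma$ along the $H_p$-flow, $p=|\xi|^2_g$. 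On $\Pi_{\delta,V}$ this average has the favorable sign on $p^{-1}(1)$: if $\gamma\ge A_++\delta$ then $\epsilon=+1$ and $\sup_{p^{-1}(1)}\langle a\rangle_T-\gamma\to A_+-\gamma\le-\delta<0$ as $T\to\infty$, so for $T$ fixed large the weight is $\le 1$ and the parametrix closes with the asserted bound; the case $\gamma\le A_--\delta$, $\epsilon=-1$ is symmetric. When $|\Im z|\gtrsim h$ (bounded $\gamma$ away from $0$) the damping is merely a harmless perturbation of the self-adjoint semiclassical resolvent estimate; the averaging is genuinely needed precisely in the subcase of bounded $|\Im\tau|$ near $0$, where $z$ is within $o(1)$ of $1$ on the real axis and $-h^2\Delta_g-1$ may fail to be invertible at all.

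The hard part will be this last step: transplanting the Lebeau--Sj\"ostrand ``damping is absorbing on average'' mechanism — which underlies \thmref{thm_Lebeau_Sjostrand} at the $L^2$ level — to the off-diagonal $L^{\frac{2n}{n+2}}\to L^{\frac{2n}{n-2}}$ setting, i.e.\ controlling the Egorov remainders and the error terms of the time-truncated Strichartz parametrix in these norms while keeping the loss exactly $h^{-2}$ and the propagation time $T$ fixed (large enough that the flow-averages are within $\varepsilon<\delta$ of $A_\pm$) yet compatible with the local-in-time Strichartz estimates. Subsidiary issues are the uniform patching of the regimes $|z-1|\ge c$ and $|z-1|\le c$, and of bounded versus unbounded $|\Im\tau|$, and respecting the quantifier order — $L$ must be taken large enough for $h\le 1/L$ to lie below the threshold of the semiclassical parametrix and for the averages to be $\varepsilon$-close to $A_\pm$, before $V$ is prescribed.
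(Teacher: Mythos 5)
Your decomposition into a low--frequency strip $\{|\Re\tau|\le L\}\setminus V$ treated by compactness and a high--frequency regime $|\Re\tau|\ge L$ treated semiclassically matches the spirit of the paper, but there are two genuine gaps, one of them a hard error in the low--frequency step.

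\textbf{The compactness claim is false.} You assert that because the eigenvalues of $P(\tau)$ in $\{|\Re\tau|\le L\}$ form a finite set, the region $\{|\Re\tau|\le L\}\setminus V$ is compact. It is not: it is an infinite vertical strip with a \emph{bounded} open set removed, hence unbounded in $\Im\tau$. Norm continuity of $\tau\mapsto P(\tau)^{-1}$ on this set therefore does not give a uniform bound. You must supply a separate estimate as $|\Im\tau|\to\infty$ with $|\Re\tau|\le L$. The paper does this directly (its Region III): multiply $P(\tau)u=f$ by $\bar u$, integrate, take real parts to get
\[
\|\nabla_g u\|_{L^2}^2+\bigl((\Im\tau)^2-(\Re\tau)^2\bigr)\|u\|_{L^2}^2
=2\Im\tau\int a|u|^2\,dV_g+\Re(f,u)_{L^2},
\]
which for $|\Re\tau|\le\frac12|\Im\tau|$ and $|\Im\tau|$ large (depending on $\|a\|_{L^\infty}$) yields $\|u\|_{H^1}\le\|f\|_{H^{-1}}$; Sobolev embedding finishes. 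Only after splitting off this sector is the remaining set $\{|\Re\tau|\le L,\ |\Im\tau|\le 2L\}\setminus V$ actually compact, so the analytic--Fredholm/continuity argument you have in mind applies there.

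\textbf{The hard step is flagged but not carried out, and your route is genuinely more difficult than the paper's.} You propose to factor through $\Lambda=\sqrt{-\Delta_g}$, write the resolvent of $\Lambda-h^{-1}+i\epsilon(a-\gamma)$ as a time integral of the half--wave propagator, and absorb the damping via an Egorov argument producing the weight $\langle a\rangle_T-\gamma$ inside the $L^{\frac{2n}{n+2}}\to L^{\frac{2n}{n-2}}$ parametrix. You correctly identify that controlling the Egorov remainders and parametrix errors in these mixed $L^p$ norms, while keeping the loss $h^{-2}$ and the propagation time fixed, is where the difficulty lies; but you do not resolve it, and it is precisely the step that the paper avoids by a cleaner modular argument. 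The paper (following Sj\"ostrand) first conjugates $h^2P$ by $Q=\mathrm{Op}_h^w(e^q)$ with $q$ chosen so that $2a-H_p(q)=2\langle a\rangle_T$ near $p^{-1}(1)$. This converts the damping into $\langle a\rangle_T$ at the level of the symbol; the sign condition $\beta\ge A_++\delta$ and G{\aa}rding's inequality then give the scalar a priori estimate
\[
\|v\|_{L^2(M)}^2\le\mathcal{O}(h)\,\|f\|_{L^{\frac{2n}{n+2}}(M)}\|v\|_{L^{\frac{2n}{n-2}}(M)},\qquad v=Q^{-1}u,
\]
a purely $L^2$ fact that requires no Strichartz and no sign structure in $L^p$. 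After the conjugation, the equation takes the form $(-h^2\Delta_g-1)v=h^2Q^{-1}f+h\,\mathrm{Op}_h^w(r_7)v$ with $r_7\in S^0$, which is structurally identical to the Laplacian case $(-h^2\Delta_g-1)u=h^2f+h\alpha u$; the damping enters the semiclassical Strichartz step only as a bounded $\mathcal{O}(h)$ perturbation, whose sign is irrelevant. Your proposal puts the dynamical averaging \emph{inside} the $L^p$ parametrix, where the error analysis is much harder, rather than \emph{before} it, where it is soft. Similarly, the paper restricts the hard semiclassical analysis to $A_++\delta\le\Im\tau\le\sup a+\delta$ (so all conjugated coefficients are uniformly bounded symbols) and reaches the remaining sectors $\sup a+\delta<\Im\tau<2\Re\tau$ etc.\ by Phragm\'en--Lindel\"of, whereas your $|z-1|\ge c$ versus $|z-1|\le c$ split would have to run the parametrix/Neumann--series analysis for $\gamma$ up to size $|\Re\tau|$, which introduces further uniformity issues you do not address.

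One more structural difference, which is a matter of taste rather than a gap: you factor globally through $\sqrt{-\Delta_g}$ (closer to the Bourgain--Shao--Sogge--Yao route via the half--wave group), while the paper factors the symbol locally as $p_0(x,\xi)=e(x,\xi)(\xi_1-a(x,\xi'))$ and treats a spatial coordinate $x_1$ as time for a microlocal semiclassical Schr\"odinger evolution, \`a la Burq--G\'erard--Tzvetkov and Koch--Tataru--Zworski. Either can work for the Laplacian; the paper's choice meshes cleanly with the conjugation step because the $\mathcal{O}(h)$ perturbation can simply be carried along as a source term $hB_1\tilde\varphi u$ in Duhamel's formula, without any need for a favorable sign.
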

 See Figure \ref{pic_damped_wave_equation}, where the spectral region $\Pi_{\delta,V}$ is described. 

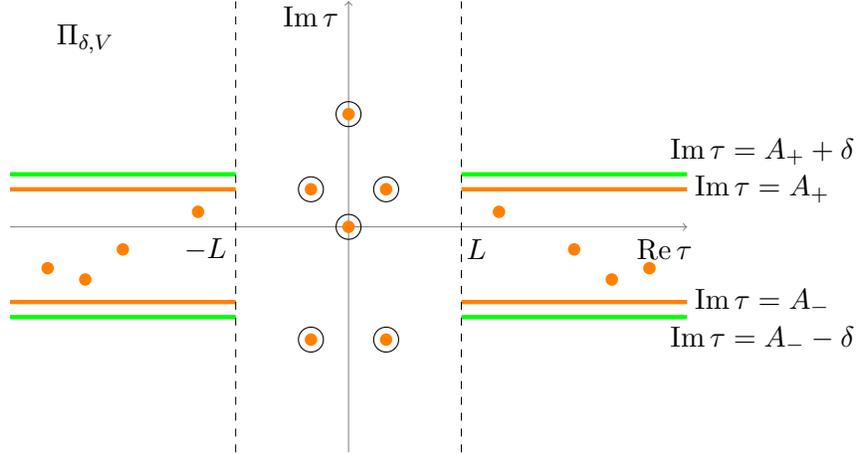
\begin{figure} [ht]
\centering
\begin{tikzpicture}
\draw [help lines, ->] (-3,0) -- (6,0);
\draw [help lines,  ->] (1.5,-3) -- (1.5,3);
\draw [orange,  ultra thick, -] (-3,0.5) -- (0,0.5);
\draw [orange,  ultra thick, -] (3,0.5) -- (6,0.5);
\node at (7,0.5) {$\Im \tau=A_+$};
\node at (7,1) {$\Im \tau=A_++\delta$};
\draw [green, ultra thick, -] (-3,0.7) -- (0,0.7);
\draw [green, ultra thick, -] (3,0.7) -- (6,0.7);
\draw [orange,  ultra thick, -] (-3,-1) -- (0,-1);
\draw [orange,  ultra thick, -] (3,-1) -- (6,-1);
\node at (7,-1) {$\Im \tau=A_-$};
\node at (7,-1.5) {$\Im \tau=A_--\delta$};
\draw [green, ultra thick, -] (-3,-1.2) -- (0,-1.2);
\draw [ green, ultra thick,-] (3,-1.2) -- (6,-1.2);
\draw [dashed] (0,-3) -- (0,3);
\node at (-0.4,-0.3) {$-L$};
\draw [dashed] (3,-3) -- (3,3);
\node at (3.2,-0.3) {$L$};
\node at (5.7,-0.3) {$\Re \tau$};
\node at (1, 2.8)  {$\Im \tau$};
\node at (-2, 2.5) {$\Pi_{\delta,V}$};
\draw (1.5,1.5) circle (1ex);
\fill[orange] (1.5,1.5) circle (0.5ex);
\draw (1.5,0) circle (1ex);
\fill[orange] (1.5,0) circle (0.5ex);
\draw (2,0.5) circle (1ex);
\fill[orange] (2,0.5) circle (0.5ex);
\draw (1,0.5) circle (1ex);
\fill[orange] (1,0.5) circle (0.5ex);
\draw (2,-1.5) circle (1ex);
\fill[orange] (2,-1.5) circle (0.5ex);
\draw (1,-1.5) circle (1ex);
\fill[orange] (1,-1.5) circle (0.5ex);
\fill[orange] (3.5,0.2) circle (0.5ex);
\fill[orange] (-0.5,0.2) circle (0.5ex);
\fill[orange] (4.5,-0.3) circle (0.5ex);
\fill[orange] (-1.5,-0.3) circle (0.5ex);
\fill[orange] (5,-0.7) circle (0.5ex);
\fill[orange] (-2,-0.7) circle (0.5ex);
\fill[orange] (5.5,-0.55) circle (0.5ex);
\fill[orange] (-2.5,-0.55) circle (0.5ex);
\end{tikzpicture}
\caption{Spectral region $\Pi_{\delta,V}$ in the uniform resolvent bound for the stationary damped wave equation \eqref{eq_resolvent_est_damped_wave_equation}.}
\label{pic_damped_wave_equation}
\end{figure}

\begin{rem}
\label{rem_main_2} 
In fact we shall prove the following more general result: 
for any $\delta>0$, there exists a constant $L>0$, such that for any neighborhood $V$ of the set $\emph{\text{Spec}}(P(\tau))\cap \{\tau\in \C: |\emph{\Re}\tau|\le L\}$,  there exists a constant $C=C(\delta, V)>0$ such that we have  
\begin{equation}
\label{eq_rem_main_2}
\|u\|_{L^{p'}(M)}\le C|\tau|^{2n(\frac{1}{p}-\frac{1}{2})-2} \|P(\tau)u\|_{L^{p}(M)},
\end{equation}
for all $u\in C^\infty(M)$, all  $\tau\in\Pi_{\delta,V}$,  and all $p\in [\frac{2n}{n+2}, \frac{2(n+1)}{n+3}]$. Here $\frac{1}{p}+\frac{1}{p'}=1$.
\end{rem}

\begin{rem} 
\label{rem_main_2_1} 
It is interesting and important to point out that using $L^2$ bounds for solutions of second order hyperbolic equations established in \cite{Koch_Tataru_1995}, $L^p$ resolvent estimates of Theorem \ref{thm_main_2}  and Remark \ref{rem_main_2} can be extended to 
the stationary wave operator perturbed by general first order terms, containing spatial derivatives. However, for simplicity of the exposition here we prefer to keep our discussion focused on the significant case of the damped wave equation.  
\end{rem} 

Notice that, in particular  in the region where $|\Re\tau|$ is large, Theorem \ref{thm_main_2} establishes uniform resolvent estimates for $P(\tau)$ outside of the dynamically defined strip  
$\R+i (A_--\delta, A_++\delta)$. After a semiclassical reduction, the first step in the proof of Theorem \ref{thm_main_2}  in a crucial region in the spectral $\tau$--plane given by 
$\Im\tau=A_++\delta$, $|\Re \tau|$ large, is therefore an averaging procedure realized by means of a pseudodifferential conjugation, which we carry out following \cite{Sjostrand_2000}. Once the averaging procedure has been performed, we obtain a semiclassical non-self-adjoint operator which is an $\mathcal{O}(h)$ perturbation of the operator $-h^2\Delta_g-1$, with the range of the symbol of the perturbation confined to the dynamical strip. To conclude, we develop an alternative proof  of Theorem \ref{thm_main} relying on the semiclassical Strichartz estimates~\cite{ST, BGT, Burq_Gerard_Tzvetkov_2004, Koch_Tataru_Zworski} which handles very easily such perturbations and embeds naturally in the semiclassical spectral analysis framework. Let us also mention the work \cite{Koch_Tataru_2005}, which gives a parametrix construction for principally normal operators, that could also be used to establish the Strichartz estimates, leading to our results. 

\begin{rem}
\label{rem_main}
In fact exploiting the semiclassical approach, we shall give a proof of the following more general version of Theorem \ref{thm_main}: given $\delta>0$,
there exists a constant $C=C(\delta)>0$ such that  for all $u\in C^\infty(M)$, all $z\in\Xi_\delta$, and all $p\in [\frac{2n}{n+2}, \frac{2(n+1)}{n+3}]$, we have 
\begin{equation}
\label{eq_rem_main}
\|u\|_{L^{p'}(M)}\le C |z|^{2n(\frac{1}{p}-\frac{1}{2})-2}\|(-\Delta_g-z^2)u\|_{L^{p}(M)},
\end{equation}
where $\frac{1}{p}+\frac{1}{p'}=1$. The estimate \eqref{eq_rem_main} agrees with the corresponding resolvent bounds for the Euclidean Laplacian, valid in all of $\C$, see \cite{Kenig_Ruiz_Sogge}.
\end{rem}

The proof of Theorem \ref{thm_main} given in \cite{DKS_resolvent} is based on the Hadamard parametrix for the resolvent $(-\Delta_g-\lambda)^{-1}$, and some estimates for oscillatory integral operators obtained by the Carleson-Sj\"olin theory.  The proof  developed in 
\cite{Bourgain_Shao_Sogge_Yao} is based on the Hadamard parametrix for the operator $\cos (t\sqrt{-\Delta_g})$,  the spectral cluster estimates, obtained in \cite{Sogge_1988},  and the method of stationary phase. Here we shall  deduce Theorem \ref{thm_main} from  semiclassical Strichartz estimates.  In doing so, we observe that the most crucial region in the complex spectral $z$--plane where the uniform estimates should be obtained is given by $\Im z=\delta>0$, $|\Re z|$ is large, which corresponds to the boundary of the parabolic region in the $\lambda$--plane. Here we perform a semiclassical rescaling, which allows us to microlocalize the problem to the energy surface $p(x,\xi) = 1$, where $p(x,\xi)$ is the principal symbol of $-\Delta_g$. To handle the problem near the energy surface, we follow the idea of \cite{BGT} (see also \cite{Koch_Tataru_Zworski}), and perform a factorization of $p(x,\xi)-1$, to reduce the problem to an evolution equation in $n-1$ variables, for which semiclassical Strichartz estimates in their localized form can be applied. The uniform resolvent estimates in the complementary regions in the spectral $z$--plane follow directly by means of some elliptic a priori estimates combined with the Phragm\'en--Lindel\"of principle. 

It has to be noticed that, at the end of the day, all known proofs of Theorem~\ref{thm_main} (including ours) rely on the choice of a particular space direction which plays the role of ``time" and eventually on the use of the (classical) Hardy-Littlewood-Sobolev inequality (see also~\cite{MSS,SS} where similar arguments are used). However, our approach to prove  Theorem \ref{thm_main} appears to have several advantages. One of them is that it is directly applicable to more general elliptic operators, including some non-self-adjoint ones, and can be combined  very naturally with methods and ideas coming from semiclassical spectral theory. 

The paper is organized as follows. In Section \ref{sec_Prelim} we collect some preliminaries on the semiclassical analysis, evolution equations, and semiclassical Strichartz estimates. Section \ref{sec_Laplace} is devoted to the proof of Theorem \ref{thm_main}, while the estimate \eqref{eq_rem_main} in Remark \ref{rem_main} is established in Section \ref{sec_Laplace_Remark}. In Section \ref{sec_Damped} the argument is extended to the damped wave operator and Theorem \ref{thm_main_2} is proved. Finally Section \ref{sec_damped_Remark} discusses the proof of  
the estimate \eqref{eq_rem_main_2} in  Remark \ref{rem_main_2}.

\section{Preliminaries on semiclassical analysis and Strichartz estimates}

\label{sec_Prelim}

In this section we shall collect some well-known results of semiclassical analysis and semiclassical Strichartz estimates  following \cite{Burq_Gerard_Tzvetkov_2004, BGT, Koch_Tataru_Zworski, Dimassi_Sjostrand_book, Zworski_book}.

Let  $m\in\R$ and let
\[
S^m(\R^{2k})=\{a\in C^\infty (\R^{2k}): |\p_x^\alpha\p_\xi^\beta a(x,\xi)|
\le C_{\alpha, \beta} \langle \xi\rangle^{m-|\beta|}, \alpha,\beta\in \N^{k}\}
\]
be the Kohn--Nirenberg symbol class. When $a\in S^m(\R^{2k})$, one defines 
the semiclassical Weyl quantization of $a$ as follows, 
\[
a^w(x,hD_x)u(x)=\text{Op}_h^w(a)u(x)=\frac{1}{(2\pi h)^k}\int\!\!\!\int e^{\frac{i}{h}\langle x-y,\xi\rangle} a\bigg(\frac{x+y}{2},\xi\bigg) u(y) dy d\xi, 
\]
$u\in \mathcal{S}(\R^k)$, the Schwartz space of functions on $\R^k$.

Let $a\in C^\infty(\R_t,S^0(\R^{2k}))$ be real-valued.  
Then by \cite[Theorem 4.23]{Zworski_book}, the self-adjoint operator $a^w(t,x,hD_x)$ is bounded on $L^2(\R^k)$, uniformly in $h\in (0,1]$, locally uniformly in $t\in \R$. 

Letting $r\in \R$, consider the operator equation,
\begin{equation}
\label{eq_4_1}
\begin{aligned}
(hD_{t}-a^w(t, x,hD_{x})) F(t,r)&=0,\quad t\in \R,\\
F(r,r)&=I.
\end{aligned}
\end{equation}
Then by \cite[Theorem 10.1]{Zworski_book},  the equation \eqref{eq_4_1} is uniquely solved by a family $\{F(t,r)\}_{t\in\R}$ of  unitary operators on $L^2(\R^k)$. Furthermore, we have
\begin{equation}
\label{eq_4_2}
F(t,r)F(r,s)=F(t,s)
\end{equation}
for all $t,r,s\in \R$. This follows from the fact that for any $r,s\in \R$,  $\tilde F(t,s,r)=F(t,r)F(r,s)-F(t,s)$ solves the homogeneous equation 
\[
(hD_{t}-a^w(t, x,hD_{x})) \tilde F(t,s,r)=0,
\]
with the initial conditions $\tilde F(r,s,r)=0$, and hence, $\tilde F(t,s,r)=0$ for any $t\in \R$. The fact that the operator $F(t,r)$ is unitary and \eqref{eq_4_2} yield  that 
\[
F(t,r)^*=F(t,r)^{-1}=F(r,t).
\]
This together with \eqref{eq_4_2} implies that 
\begin{equation}
\label{eq_4_3}
F(t,r)F(s,r)^*=F(t,s). 
\end{equation}

Let $l\in \R$, and  consider now the following initial value problem,
\begin{align*}
(hD_{t}-a^w(t, x,hD_{x})) u(t,x)&=f(t,x),\quad (t,x)\in \R\times\R^k,\\
u(l,x)&=u_0(x).
\end{align*}
Then Duhamel's formula together with \eqref{eq_4_3}  give
\begin{equation}
\label{eq_4_4}
\begin{aligned}
u(t,x)&=F(t,l)u_0(x)+\frac{i}{h} \int_{l}^t F(t,s) f(s,x) ds\\
&=F(t,l)u_0(x)+\frac{i}{h} \int_{l}^t F(t,r)F(s,r)^* f(s,x) ds,
\end{aligned}
\end{equation}
with any $r\in\R$.

Let us now recall the semiclassical Strichartz estimates,  following \cite{BGT,  
Burq_Gerard_Tzvetkov_2004, Koch_Tataru_Zworski} and  \cite[Theorem 10.6]{Zworski_book}.  The exponent pair $(p,q)$ is called  sharp $\sigma$-admissible, $\sigma>0$, if 
\[
\frac{2}{p}+\frac{2\sigma}{q}=\sigma,\quad 2\le p\le \infty, \quad 1\le q\le \infty, \quad (p,q)\ne (2,\infty),
\]
If $G(t,x)$ is a function on $\R\times\R^k$, we use the following standard notation for its mixed norm,  
\[
\| G\|_{L^p_tL^q_x}=\bigg(\int_{\R}\| G (t,\cdot)\|_{L^q(\R^k)}^p dt\bigg)^{1/p}.
\]
If $\tilde q\in [1,\infty]$, we denote by $\tilde q'$ the H\"older conjugate exponent of $\tilde q$. Notice that below we shall {\em not} need the difficult end-point case ($p=2$) but only the much easier case $2<p \leq + \infty$. Notice also that these estimates have a long history in harmonic analysis which can be traced back to the works by Stein and Tomas~\cite{To},  Strichartz~\cite{Stri}, Greenleaf~\cite{Gr} (see also Stein restriction theorem~\cite{St86} and the abstract setting developed by Ginibre and Velo in~\cite{GiVe}).

We shall need the semiclassical Strichartz estimates for the microlocalized solution operator $F(t,r)$ to the time-dependent Schr\"odinger equation \eqref{eq_4_1}, i.e. for
\[
U(t,r):=\psi(t-r)\chi^w(x,hD_x)F(t,r),
\]
where $\chi\in C^\infty_0(\R^{2k})$ and $\psi\in L^\infty(\R)$ with compact support sufficiently close to $0$. Using the semiclassical parametrix construction for $U(t,r)$, the following result was established in \cite[Section 2.2]{BGT}, see also \cite{Burq_Gerard_Tzvetkov_2004, Koch_Tataru_Zworski}, \cite[Theorem 10.8]{Zworski_book}.

\begin{thm}
\label{thm_Zworski}
Let $I\subset\R$ be a compact interval.  Assume that 
\begin{equation}
\label{eq_3_3}
\begin{aligned}
\p_\xi^2 a(t,x,\xi) \text{ is nonsingular for all } (x,\xi)\in\supp(\chi)\\
 \text{ and for all } t \text{ in a neighborhood of }I.  
 \end{aligned}
\end{equation}
Then we have 
\[
\| U(t,r)U^*(s,r)\|_{L^1(\R^k)\to L^\infty(\R^k)}\le C h^{-k/2} |t-s|^{-k/2},
\]
for all $s,t\in \R$ and $r\in I$ with a constant independent of $r$. 
\end{thm}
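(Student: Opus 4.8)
The plan is to establish the dispersive estimate by constructing a semiclassical parametrix (WKB representation) for the microlocalized propagator $U(t,r)$ and then reading off the $L^1\to L^\infty$ decay from an explicit oscillatory-integral kernel to which stationary phase applies. First I would fix $r\in I$ and look for an approximate solution of \eqref{eq_4_1} in the form
\[
\widetilde F(t,r)u_0(x)=\frac{1}{(2\pi h)^k}\int\!\!\!\int e^{\frac{i}{h}(\varphi(t,x,\xi)-\langle y,\xi\rangle)}\, b(t,x,\xi;h)\, u_0(y)\,dy\,d\xi,
\]
with $\varphi(r,x,\xi)=\langle x,\xi\rangle$ and $b(r,x,\xi;h)=1$ on a neighborhood of $\supp(\chi)$. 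Plugging this ansatz into $hD_t-a^w(t,x,hD_x)$ and collecting powers of $h$ yields the Hamilton--Jacobi equation $\p_t\varphi + a(t,x,\p_x\varphi)=0$ for the phase, together with transport equations for the amplitude $b=b_0+hb_1+\cdots$. Since we only integrate over $\xi$ in a compact set (from $\chi$) and over $t$ in a short interval (from $\psi$ supported close to $0$), the Hamilton--Jacobi equation has a smooth solution for $|t-r|$ small by the method of characteristics — this is exactly why $\psi$ is required to have support sufficiently close to $0$. The difference between the true and the approximate propagator is $\mathcal{O}_{L^2\to L^2}(h^\infty)$ on the relevant time interval, which is absorbed harmlessly into the final estimate after composing with the smoothing cutoffs; I would dispatch this with a standard energy/Duhamel argument using the uniform $L^2$ boundedness of $a^w(t,x,hD_x)$ quoted before Theorem~\ref{thm_Zworski}.

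Next I would form the kernel of $U(t,r)U^*(s,r)$ by composing the two parametrices. Carrying out the $\xi$-integration in the intermediate variable (or equivalently writing the kernel of $U(t,r)U^*(s,r)$ directly), one obtains an oscillatory integral
\[
K_{t,s}(x,y)=\frac{1}{(2\pi h)^k}\int e^{\frac{i}{h}\Psi(t,s,x,y,\xi)}\, c(t,s,x,y,\xi;h)\,d\xi,
\]
where $\Psi=\varphi(t,x,\xi)-\varphi(s,y,\xi)$ (after identifying the intermediate frequency variable) and $c$ is a symbol compactly supported in $\xi$, bounded uniformly in $h$. The crucial point is a lower bound on the Hessian of $\Psi$ in $\xi$: since $\varphi(r,x,\xi)=\langle x,\xi\rangle$ has vanishing $\xi$-Hessian, a Taylor expansion in $t-r$ gives $\p_\xi^2\varphi(t,x,\xi)=(t-r)\,\p_\xi^2 a(r,x,\xi)+\mathcal{O}((t-r)^2)$, so the nonsingularity hypothesis \eqref{eq_3_3} yields $\|\p_\xi^2\Psi\|\gtrsim |t-s|$ uniformly, for $|t-r|,|s-r|$ small. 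Then the semiclassical stationary phase estimate (nonstationary phase when $|t-s|$ is not small relative to the size of the $\xi$-support, stationary phase otherwise) gives $|K_{t,s}(x,y)|\le C h^{-k}(|t-s|/h)^{-k/2}=Ch^{-k/2}|t-s|^{-k/2}$, uniformly in $x,y$ and in $r\in I$. Taking the supremum over $x,y$ is precisely the $L^1\to L^\infty$ operator norm bound, and uniformity in $r$ follows because $I$ is compact and all constructions depend continuously (indeed smoothly) on $r$ with the relevant bounds controlled on $\supp(\chi)$ and a fixed neighborhood of $I$.

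The main obstacle is the parametrix construction itself: one must check that the Hamilton--Jacobi equation is solvable with a smooth, uniformly controlled phase $\varphi$ on the support of the cutoffs for the full (short) time interval, that the amplitude transport equations can be solved to all orders with symbols of uniformly bounded seminorms, and — most delicately — that the composition $U(t,r)U^*(s,r)$ can be organized so that the $\xi$-Hessian lower bound $\gtrsim|t-s|$ genuinely holds with constants independent of $r\in I$ and of the particular points on $\supp(\chi)$. This is where hypothesis \eqref{eq_3_3} is used in an essential, quantitative way. All of this is carried out in detail in \cite[Section 2.2]{BGT}; see also \cite{Burq_Gerard_Tzvetkov_2004, Koch_Tataru_Zworski} and \cite[Theorem 10.8]{Zworski_book}, and we refer the reader there for the complete argument. $\qed$
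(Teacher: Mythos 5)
The paper itself does not prove Theorem~\ref{thm_Zworski}: it records it as an established result and refers to \cite[Section 2.2]{BGT}, \cite{Burq_Gerard_Tzvetkov_2004, Koch_Tataru_Zworski} and \cite[Theorem 10.8]{Zworski_book}, which is exactly what your sketch does at the end. Your outline of the parametrix argument is the standard one from those sources and correctly isolates the role of \eqref{eq_3_3}: the $\xi$-Hessian of the WKB phase vanishes at $t=s$ and is comparable to $(t-s)\partial_\xi^2 a$, so stationary phase produces the $h^{-k/2}|t-s|^{-k/2}$ kernel bound. Two small points worth repairing. First, the eikonal equation associated to $(hD_t-a^w)F=0$ is $\partial_t\varphi = a(t,x,\partial_x\varphi)$, not $\partial_t\varphi + a(t,x,\partial_x\varphi)=0$; the sign is immaterial for the Hessian estimate but should be stated correctly. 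Second, the composition is more transparently handled by invoking \eqref{eq_4_3} to write $U(t,r)U^*(s,r)=\psi(t-r)\overline{\psi(s-r)}\,\chi^w(x,hD_x) F(t,s)\,\chi^w(x,hD_x)^*$ and constructing a single parametrix for $F(t,s)$ with reference time $s$, so that the phase has Hessian $(t-s)\partial_\xi^2 a(s,x,\xi)+\mathcal{O}((t-s)^2)$ directly; your phase $\Psi=\varphi(t,x,\xi)-\varphi(s,y,\xi)$, built from two parametrices both based at $r$, mixes $\partial_\xi^2 a$ at the two distinct spatial points $x,y$, and the advertised lower bound $\|\partial_\xi^2\Psi\|\gtrsim|t-s|$ only holds near the stationary set in $\xi$ — a subtlety you rightly flag as the delicate step, and which the single-parametrix formulation avoids.
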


As a consequence of the standard $TT^*$ argument, the Hardy-Littlewood-Sobolev inequality and Theorem \ref{thm_Zworski}, we state the following semiclassical Strichartz estimate, see \cite[Corollary 2.2]{BGT} and \cite{Koch_Tataru_Zworski}, \cite[Theorem 10.7]{Zworski_book}.

\begin{cor}
Let $I\subset\R$ be a compact interval and assume that the condition \eqref{eq_3_3} holds. Then we have 
\begin{equation}
\label{eq_3_4}
\sup_{r\in I} \bigg( \int_{\R}\|U(t,r)f\|^p_{L^q(\R^k)} dt\bigg)^{1/p}\le Ch^{-1/p}\|f\|_{L^2(\R^k)},
\end{equation}
\begin{equation}
\label{eq_3_5}
\sup_{r\in I}\bigg\| \int_{-\infty}^t U(t,r) U^*(s,r) G(s,\cdot)ds\bigg\|_{L^p_tL^q_x}\le C h^{\big(\frac{1}{\tilde p'}-\frac{1}{p}-1\big)}\|G\|_{L^{\tilde p'}_tL^{\tilde q'}_x},
\end{equation}
for  all sharp $k/2$-admissible pairs $(p,q)$ and $(\tilde p,\tilde q)$. 
\end{cor}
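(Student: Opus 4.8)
The plan is to deduce the Corollary from Theorem~\ref{thm_Zworski} by the classical $TT^*$ machinery. First, I would fix $r\in I$ and set $T=T_r$ to be the operator $f\mapsto U(\cdot,r)f$, mapping $L^2(\R^k)$ into space-time functions on $\R\times\R^k$. The dual $T^*$ takes a space-time function $G$ to $\int_\R U^*(s,r)G(s,\cdot)\,ds\in L^2(\R^k)$, so that $TT^*G(t,\cdot)=\int_\R U(t,r)U^*(s,r)G(s,\cdot)\,ds$. The estimate \eqref{eq_3_4} is the bound $\|T\|_{L^2_x\to L^p_tL^q_x}\le Ch^{-1/p}$, and by duality this is equivalent to $\|TT^*\|_{L^{p'}_tL^{q'}_x\to L^p_tL^q_x}\le C^2h^{-2/p}$. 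So the whole problem reduces to establishing that last mixed-norm bound for $TT^*$, and then the inhomogeneous estimate \eqref{eq_3_5} with $(\tilde p,\tilde q)=(p,q)$ will follow by the Christ--Kiselev lemma (which upgrades the full integral $\int_\R$ to the retarded integral $\int_{-\infty}^t$, legitimate precisely because $p>2$, so the diagonal endpoint is avoided), together with the usual interpolation/duality trick to get mixed admissible pairs $(\tilde p,\tilde q)\ne(p,q)$.

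The core step is thus the dispersive-to-Strichartz passage. Here I would combine two ingredients. The dispersive bound from Theorem~\ref{thm_Zworski} gives
\[
\|U(t,r)U^*(s,r)\|_{L^1_x\to L^\infty_x}\le Ch^{-k/2}|t-s|^{-k/2},
\]
while the unitarity of $F(t,r)$ and the $L^2\to L^2$ boundedness of $\chi^w(x,hD_x)$ (uniform in $h$) give the trivial bound $\|U(t,r)U^*(s,r)\|_{L^2_x\to L^2_x}\le C$. Interpolating these two endpoint bounds yields, for $2\le q\le\infty$,
\[
\|U(t,r)U^*(s,r)f\|_{L^q_x}\le C h^{-k(\frac12-\frac1q)}|t-s|^{-k(\frac12-\frac1q)}\|f\|_{L^{q'}_x}.
\]
Now I apply this inside the time integral and invoke the Hardy--Littlewood--Sobolev inequality in the $t$ variable: writing $\sigma=k/2$ and using the admissibility relation $\frac2p+\frac{2\sigma}{q}=\sigma$, the exponent $k(\frac12-\frac1q)=\sigma-\frac{2\sigma}{q}\cdot\frac12\cdot\ldots$ works out so that $k(\frac12-\frac1q)=\frac2p\cdot\frac{?}{?}$; more precisely the admissibility condition is exactly what makes $HLS$ applicable with the right gain, giving $\|TT^*G\|_{L^p_tL^q_x}\le Ch^{-k(\frac12-\frac1q)}\|G\|_{L^{p'}_tL^{q'}_x}$, and one checks $k(\frac12-\frac1q)=\frac2p$ under sharp $k/2$-admissibility, which is the claimed power $h^{-2/p}$. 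This is the step I expect to require the most care: one must be scrupulous that the HLS exponents match the admissibility relation (including the endpoint exclusion $(p,q)\ne(2,\infty)$), and one must keep the constant uniform in $r\in I$, which is guaranteed by the uniformity in Theorem~\ref{thm_Zworski}.

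Finally, to obtain \eqref{eq_3_5} in its stated generality, with two possibly different admissible pairs $(p,q)$ and $(\tilde p,\tilde q)$ and the precise power $h^{(1/\tilde p'-1/p-1)}$, I would run the following bookkeeping. From $\|T\|_{L^2_x\to L^p_tL^q_x}\le Ch^{-1/p}$ and its dual $\|T^*\|_{L^{\tilde p'}_tL^{\tilde q'}_x\to L^2_x}\le Ch^{-1/\tilde p}$, composition gives $\|TT^*\|_{L^{\tilde p'}_tL^{\tilde q'}_x\to L^p_tL^q_x}\le Ch^{-1/p-1/\tilde p}$; rewriting $-1/\tilde p=1/\tilde p'-1$ produces exactly the exponent $h^{(1/\tilde p'-1/p-1)}$. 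Passing from the full integral to the retarded one $\int_{-\infty}^t$ costs nothing beyond the Christ--Kiselev lemma, whose hypothesis (strict inequality between the exponents, equivalently $p>\tilde p'$ in the relevant range, certainly satisfied away from the double endpoint) holds here since we have excluded $p=2$. I would remark that this is entirely standard and point to \cite[Corollary 2.2]{BGT}, \cite{Koch_Tataru_Zworski}, and \cite[Theorem 10.7]{Zworski_book} for the details, emphasizing only the two nontrivial inputs: the dispersive estimate of Theorem~\ref{thm_Zworski} and the uniformity in $r$.
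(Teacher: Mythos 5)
Your proposal is correct and reconstructs precisely the argument the paper alludes to: the paper gives no proof of its own and simply cites the standard $TT^*$ plus Hardy--Littlewood--Sobolev machinery together with Theorem~\ref{thm_Zworski} (referring to \cite[Corollary 2.2]{BGT}, \cite{Koch_Tataru_Zworski}, \cite[Theorem 10.7]{Zworski_book}), which is exactly what you carry out. One small imprecision worth noting: the HLS step requires $\alpha=2/p<1$, i.e.\ $p>2$, not merely $(p,q)\ne(2,\infty)$; for $k\ge3$ there do exist sharp $k/2$-admissible pairs with $p=2$ and $q<\infty$, and in that case one would need the Keel--Tao endpoint theorem rather than HLS and Christ--Kiselev. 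The paper sidesteps this by remarking explicitly, just before the corollary, that only the easier range $2<p\le\infty$ is needed in what follows, so your proof covers the case actually used.
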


We shall need the following result, which is a consequence of the proof of \cite[Lemma 2.2]{Koch_Tataru_Zworski}. 
\begin{lem}
\label{lem_Zworski_lem_2_2}
Let $x=(x',x'')\in \R^k$, $x'\in \R^{k_1}$, $x''\in\R^{k_2}$, so that $k_1+k_2=k$, and let $a\in \mathcal{S}(T^*\R^{k_1})$. Then for $1\le q\le p\le \infty$ and $1\le r\le \infty$, we have
\[
\|a^w(x',hD_{x'})u(x',x'')\|_{L^p_{x'}L^r_{x''}}\le C h^{k_{1}(\frac{1}{p}-\frac{1}{q})})\| u\|_{L^q_{x'}L^r_{x''}}.
\]
\end{lem}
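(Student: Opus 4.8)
The statement to prove is \lemref{lem_Zworski_lem_2_2}: a mixed-norm bound for the operator $a^w(x',hD_{x'})$ acting only in the $x'$-variables, namely
\[
\|a^w(x',hD_{x'})u(x',x'')\|_{L^p_{x'}L^r_{x''}}\le C h^{k_1(\frac{1}{p}-\frac{1}{q})}\| u\|_{L^q_{x'}L^r_{x''}},
\qquad 1\le q\le p\le \infty,\ 1\le r\le\infty.
\]
The plan is to reduce everything to a Young-type convolution inequality in the $x'$-variable, uniformly in $x''$, using that the Schwartz kernel of $a^w(x',hD_{x'})$ has nice scaling in $h$. First I would write
\[
a^w(x',hD_{x'})v(x')=\int_{\R^{k_1}} K_h(x',y')\,v(y')\,dy',
\qquad
K_h(x',y')=\frac{1}{(2\pi h)^{k_1}}\int e^{\frac{i}{h}\langle x'-y',\xi'\rangle} a\Big(\tfrac{x'+y'}{2},\xi'\Big)\,d\xi',
\]
valid since $a\in\mathcal S(T^*\R^{k_1})$. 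Because $a$ is Schwartz (not merely a symbol), nonstationary phase / integration by parts in $\xi'$ gives, for every $N$,
\[
|K_h(x',y')|\le C_N\, h^{-k_1}\Big(1+\frac{|x'-y'|}{h}\Big)^{-N},
\]
so that the rescaled kernel $\widetilde K(z',w'):=h^{k_1}K_h(hz',hw')$ — or more simply the bound on $K_h$ above — has the property that $\int_{\R^{k_1}} |K_h(x',y')|\,dy'$ and $\int_{\R^{k_1}}|K_h(x',y')|\,dx'$ are both bounded by $C$ independently of $h$, and more precisely the $L^s$-norm of $K_h(x',\cdot)$ in $y'$ (or of $K_h(\cdot,y')$ in $x'$) is $\le C_s\,h^{-k_1/s'}=C_s\,h^{-k_1(1-1/s)}$ for any $s\in[1,\infty]$, by inserting the pointwise bound with $N$ large and changing variables $y'\mapsto (x'-y')/h$.

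Next I would apply Young's inequality in the $x'$-variable. Fix $x''$; set $v(x')=u(x',x'')$ and think of $r$ as a frozen outer exponent. With $\frac1p=\frac1q+\frac1s-1$, i.e. $\frac1s=1-\frac1q+\frac1p\in[0,1]$ so that $\frac1{s'}=\frac1q-\frac1p\ge0$, Young's convolution inequality (in the form for operators with kernel dominated by a convolution kernel $|K_h(x',y')|\le \Phi_h(x'-y')$) yields
\[
\|a^w(x',hD_{x'})v\|_{L^p_{x'}}\le \|\Phi_h\|_{L^s_{x'}}\,\|v\|_{L^q_{x'}}
\le C\,h^{-k_1/s'}\,\|v\|_{L^q_{x'}} = C\,h^{k_1(\frac1p-\frac1q)}\,\|v\|_{L^q_{x'}}.
\]
Here $\Phi_h(z')=C_N h^{-k_1}(1+|z'|/h)^{-N}$ with $N$ chosen large enough that $\Phi_h\in L^s$, and the stated power of $h$ comes out by the substitution $z'=h\zeta'$. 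Finally I would restore the $x''$-variable: the operator $a^w(x',hD_{x'})$ acts as the identity on the $x''$-slot, so applying Minkowski's integral inequality to move the $L^r_{x''}$-norm inside — legitimate since $r\ge 1$ — gives
\[
\|a^w(x',hD_{x'})u\|_{L^p_{x'}L^r_{x''}}
\le \big\|\,\|\Phi_h * |u(\cdot,x'')|\,\|_{L^p_{x'}}\,\big\|_{L^r_{x''}}
\le C h^{k_1(\frac1p-\frac1q)}\big\|\,\|u(\cdot,x'')\|_{L^q_{x'}}\,\big\|_{L^r_{x''}}.
\]
When $q\le r$ this is immediate; when $r<q$ one first uses Minkowski to bound $\|\,\|u(\cdot,x'')\|_{L^q_{x'}}\,\|_{L^r_{x''}}\le \|u\|_{L^q_{x'}L^r_{x''}}$ in the appropriate order, and the convolution estimate commutes with the outer norm since convolution with a positive kernel is a positive operator. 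Either way one arrives at the claimed inequality.

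The only genuinely delicate point is the kernel estimate $|K_h(x',y')|\le C_N h^{-k_1}(1+|x'-y'|/h)^{-N}$: one must use that $a$ decays (it is Schwartz, so $\partial_{\xi'}^\alpha a$ is integrable in $\xi'$ uniformly in the base point) to integrate by parts in $\xi'$ as many times as needed, each integration by parts producing a factor $h/|x'-y'|$. I do not expect any obstruction here — it is the standard dispersive-free kernel bound for a Schwartz symbol — and the rest is bookkeeping with Young's inequality and Minkowski's inequality. I would present it in exactly this order: (i) kernel formula and pointwise bound; (ii) $L^s$-norm of the kernel and the resulting $L^q_{x'}\to L^p_{x'}$ bound via Young with the correct power of $h$; (iii) insertion of the inert $L^r_{x''}$ norm via Minkowski.
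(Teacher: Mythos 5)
Your overall plan—kernel formula, nonstationary-phase decay estimate on $K_h$, Young's inequality in $x'$, Minkowski in $x''$—is the standard argument and is exactly what the reference cited in the paper (Koch--Tataru--Zworski, Lemma 2.2) uses; the paper itself does not reproduce a proof. The kernel bound $|K_h(x',y')|\le C_N h^{-k_1}(1+|x'-y'|/h)^{-N}$ is legitimate (one only needs $\partial_{\xi'}^\alpha a$ integrable in $\xi'$ uniformly in the base point, which $a\in\mathcal S(T^*\R^{k_1})$ certainly gives), and your computation $\|\Phi_h\|_{L^s}\sim h^{-k_1/s'}=h^{k_1(\frac1p-\frac1q)}$ with $1+\frac1p=\frac1q+\frac1s$ is correct, with the constraint $1\le q\le p\le\infty$ ensuring $s\in[1,\infty]$.

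The one place your write-up is off is the Minkowski step: the displayed chain
\[
\|a^w(x',hD_{x'})u\|_{L^p_{x'}L^r_{x''}}
\le \big\|\,\|\Phi_h * |u(\cdot,x'')|\,\|_{L^p_{x'}}\,\big\|_{L^r_{x''}}
\]
reverses the order of the mixed norm (the right-hand side is $L^r_{x''}L^p_{x'}$, not $L^p_{x'}L^r_{x''}$), and swapping the order that way requires $p\ge r$, which is not assumed; this is why you then find yourself distinguishing cases $q\le r$ versus $r<q$. The clean path avoids interchanging the two outer norms altogether: from the pointwise domination $|(a^w u)(x',x'')|\le\int\Phi_h(x'-y')\,|u(y',x'')|\,dy'$, apply Minkowski's integral inequality only to move the $L^r_{x''}$-norm through the $dy'$-integral, obtaining
\[
\|(a^w u)(x',\cdot)\|_{L^r_{x''}}\ \le\ \int \Phi_h(x'-y')\,\|u(y',\cdot)\|_{L^r_{x''}}\,dy' \ =\ (\Phi_h * g)(x'),\qquad g(y'):=\|u(y',\cdot)\|_{L^r_{x''}},
\]
and then Young's inequality in $x'$ gives
\[
\|a^w u\|_{L^p_{x'}L^r_{x''}}\le\|\Phi_h * g\|_{L^p_{x'}}\le\|\Phi_h\|_{L^s}\,\|g\|_{L^q_{x'}}=\|\Phi_h\|_{L^s}\,\|u\|_{L^q_{x'}L^r_{x''}}.
\]
This works for every $r\in[1,\infty]$ and every $1\le q\le p\le\infty$ without a case distinction. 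With this correction your proof is complete.
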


Recall from \cite{Koch_Tataru_Zworski} that $u=u(h)\in L^2(\R^k)$ is said to be microlocalized in a compact subset of $T^*\R^k$ if there exists $\chi\in C^\infty_0(T^*\R^k)$ such that 
\begin{equation}
\label{eq_3_6_0}
 u=\chi^w(x,hD_x)u+\mathcal{O}_{\mathcal{S}}(h^\infty)\|u\|_{L^2(\R^k)}.
\end{equation}
Here  and in what follows the notation  $\mathcal{O}_{\mathcal{S}}(h^\infty)$ stands for a function $g(x;h)\in \mathcal{S}(\R^k)$ such that 
\[
\sup_{x\in \R^k}|x^\beta \p_{x}^\alpha g(x;h)|=\mathcal{O}(h^\infty), \quad \text{for all }\alpha, \beta. 
\]

We shall also need the following result from 
\cite[Lemma 3.1]{Nonnenmacher_Sjostrand_Zworski_2014}. 
\begin{lem}
\label{lem_NSZ}
Let $u=u(h)\in L^2(\R^k)$ be  microlocalized in a compact subset of $T^*\R^k$. Then 
\begin{equation}
\label{eq_lem_NSZ_1}
\|u\|_{L^\infty_{x_1}L^2_{x'}}=\mathcal{O}(h^{-\frac{1}{2}})\|u\|_{L^2(\R^{k})},
\end{equation}
and there exists $\varphi\in C^\infty_0(T^*\R^{k-1})$ such that 
\begin{equation}
\label{eq_lem_NSZ_2}
 u(x_1,x')=\varphi^w(x',hD_{x'})u(x_1,\cdot)+\mathcal{O}_{\mathcal{S}(\R^k)}(h^\infty)\|u\|_{L^2(\R^k)}.
\end{equation}
\end{lem}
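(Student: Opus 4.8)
\emph{Proof proposal.} The plan is to prove the two assertions separately, using only the defining property of microlocalization, namely $u=\chi^w(x,hD_x)u+\mathcal{O}_{\mathcal{S}}(h^\infty)\|u\|_{L^2(\R^k)}$ for some $\chi\in C_0^\infty(T^*\R^k)$, together with the semiclassical pseudodifferential calculus. Note first that this identity already shows $u\in\mathcal{S}(\R^k)$, since $\chi^w(x,hD_x)$ maps $\mathcal{S}'(\R^k)$ into $\mathcal{S}(\R^k)$, so all the manipulations below are legitimate.

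For \eqref{eq_lem_NSZ_1} I would use a one–dimensional interpolation inequality in the variable $x_1$. Writing $g(x_1)=\|u(x_1,\cdot)\|_{L^2_{x'}}^2=\int_{\R^{k-1}}|u(x_1,x')|^2\,dx'$, one has $g'(x_1)=2\,\Re\int \overline{u(x_1,x')}\,\partial_{x_1}u(x_1,x')\,dx'$, so by Cauchy--Schwarz and the fundamental theorem of calculus $\sup_{x_1}g(x_1)\le 2\|u\|_{L^2(\R^k)}\|\partial_{x_1}u\|_{L^2(\R^k)}$. It remains to bound $\|\partial_{x_1}u\|_{L^2}=h^{-1}\|hD_{x_1}u\|_{L^2}$. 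Composing the microlocalization identity with $hD_{x_1}$, the Weyl calculus gives $hD_{x_1}\circ\chi^w(x,hD_x)=\text{Op}_h^w(b)$ with $b=\xi_1\chi+\tfrac{h}{2i}\partial_{x_1}\chi$ a compactly supported symbol, hence an operator bounded on $L^2(\R^k)$ uniformly in $h\in(0,1]$; the $\mathcal{O}_{\mathcal{S}}(h^\infty)$ remainder contributes $\mathcal{O}(h^\infty)\|u\|_{L^2}$. Thus $\|hD_{x_1}u\|_{L^2}\le C\|u\|_{L^2}$, so $\|\partial_{x_1}u\|_{L^2}\le Ch^{-1}\|u\|_{L^2}$, and taking square roots in the displayed bound yields \eqref{eq_lem_NSZ_1}.

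For \eqref{eq_lem_NSZ_2} I would construct $\varphi$ by ``forgetting'' the variables $(x_1,\xi_1)$. Let $K\subset T^*\R^{k-1}$ be the compact image of $\supp\chi$ under the projection $(x_1,x',\xi_1,\xi')\mapsto(x',\xi')$, and pick $\varphi\in C_0^\infty(T^*\R^{k-1})$ with $\varphi\equiv 1$ on a neighborhood of $K$. Regarded as a symbol on $T^*\R^k$ independent of $(x_1,\xi_1)$, $\varphi^w(x',hD_{x'})$ is just $\mathrm{id}_{x_1}\otimes\varphi^w(x',hD_{x'})$, bounded on $L^2(\R^k)$ and on $\mathcal{S}(\R^k)$ uniformly in $h$. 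Since $\varphi\chi=\chi$ and $\partial^\alpha\varphi$ vanishes on a neighborhood of $\supp\chi$ for every $\alpha\neq0$, the Moyal product gives $\varphi\#\chi=\chi$ modulo a symbol that is $\mathcal{O}(h^\infty)$ in every seminorm, i.e. $\varphi^w(x',hD_{x'})\chi^w(x,hD_x)=\chi^w(x,hD_x)+R$ with $R$ smoothing and $\mathcal{O}(h^\infty)$ as a map $L^2(\R^k)\to\mathcal{S}(\R^k)$. Feeding $\chi^w(x,hD_x)u=u-\mathcal{O}_{\mathcal{S}}(h^\infty)\|u\|_{L^2}$ into both sides and using the uniform boundedness of $\varphi^w$, one gets $u=\varphi^w(x',hD_{x'})u+\mathcal{O}_{\mathcal{S}(\R^k)}(h^\infty)\|u\|_{L^2(\R^k)}$; since $\varphi^w(x',hD_{x'})$ acts only in $x'$ with $x_1$ a parameter, this is precisely \eqref{eq_lem_NSZ_2}.

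The $L^2$ and Schwartz mapping bounds are routine. The one point deserving care is the composition step in the proof of \eqref{eq_lem_NSZ_2}, where $\varphi$, viewed on $T^*\R^k$, is \emph{not} compactly supported in $\xi_1$; this is harmless because the other factor $\chi$ is compactly supported, so the asymptotic expansion of $\varphi\#\chi$ is controlled and the disjointness of $\supp\partial^\alpha\varphi$ from $\supp\chi$ annihilates every term of positive order. I expect this bookkeeping, together with tracking the precise meaning of the $\mathcal{O}_{\mathcal{S}}(h^\infty)$ errors through these operations, to be the only mildly delicate aspect of the argument.
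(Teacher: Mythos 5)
The paper does not prove this lemma; it is imported from Nonnenmacher--Sj\"ostrand--Zworski \cite{Nonnenmacher_Sjostrand_Zworski_2014}, Lemma 3.1, so there is no in-paper proof to compare against. That said, your argument is correct and is essentially the standard one. For \eqref{eq_lem_NSZ_1}, the chain ``$g(x_1)\le\int|g'|\le 2\|u\|_{L^2}\|\partial_{x_1}u\|_{L^2}$'' is valid since $u\in\mathcal{S}$ (so $g\to 0$ at infinity), and $\|hD_{x_1}u\|_{L^2}\lesssim\|u\|_{L^2}$ follows because $\xi_1\#\chi=\xi_1\chi+\tfrac{h}{2i}\partial_{x_1}\chi$ is an exact, compactly supported symbol (the Moyal expansion terminates when one factor is linear), and the $\mathcal{O}_{\mathcal{S}}(h^\infty)$ tail absorbs the extra factor of $h$. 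For \eqref{eq_lem_NSZ_2}, your ``disjoint microsupport'' composition argument is right: $\varphi\in S(1)$ and $\chi\in S(\langle(x,\xi)\rangle^{-N})$ for all $N$, every $j\ge 1$ term of $\varphi\#\chi$ is a contraction $\partial^\alpha\varphi\cdot\partial^\alpha\chi$ which vanishes identically because $\partial^\alpha\varphi$ ($\alpha\ne 0$) is zero on a neighborhood of the $(x',\xi')$-projection of $\supp\chi$, and the remainder estimates in the composition calculus then give $\varphi\#\chi-\chi=\mathcal{O}(h^\infty)$ in $S(\langle(x,\xi)\rangle^{-N})$, hence a Schwartz kernel that is $\mathcal{O}_{\mathcal{S}(\R^{2k})}(h^\infty)$. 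The only point you should spell out slightly more carefully is the last step, $\varphi^w r=\mathcal{O}_{\mathcal{S}}(h^\infty)\|u\|_{L^2}$ where $r=u-\chi^w u$: the Schwartz seminorms of $\varphi^w(x',hD_{x'})$ are only polynomially bounded in $h^{-1}$, but since $r$ is already $\mathcal{O}_{\mathcal{S}}(h^\infty)\|u\|_{L^2}$ this loss is harmless -- worth stating explicitly since \eqref{eq_lem_NSZ_2} demands an $\mathcal{O}_{\mathcal{S}}(h^\infty)$ remainder, not merely $\mathcal{O}_{L^2}(h^\infty)$.
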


Let us next  recall the classical Sobolev embedding. When $1\le p\le \infty$ and $s\in\R$, we set 
\[
W^{s,p}(\R^n)=\{u\in \mathcal{S}'(\R^n): (1-\Delta)^{\frac{s}{2}}u\in L^p(\R^n)\}. 
\]
Let  $1<p\le q<\infty$ and $\frac{1}{p}-\frac{1}{q}=\frac{s}{n}$. Then we know that 
\begin{equation}
\label{eq_classical_Sobolev_emb}
W^{s,p}(\R^n)\subset L^q(\R^n), \quad L^p(\R^n)\subset W^{-s,q}(\R^n),
\end{equation}
and the inclusions are continuous. We shall need the following semiclassical version of the embeddings \eqref{eq_classical_Sobolev_emb}. 

\begin{lem}
\label{lem_semiclass_emd}
Let $1<p\le q<\infty$ and $\frac{1}{p}-\frac{1}{q}=\frac{s}{n}$. Then 
\begin{equation}
\label{eq_semiclass_emb}
\|u\|_{L^q(\R^n)}\le C h^{-s}\|u\|_{W^{s,p}_{\emph{scl}}(\R^n)}, \quad u\in C^\infty_0(\R^n), 
\end{equation}
where
\[
\|u\|_{W^{s,p}_{\emph{scl}}(\R^n)}=\|  (1-h^2\Delta)^{\frac{s}{2}}u\|_{L^p(\R^n)}.
\]
\end{lem}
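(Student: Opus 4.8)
The plan is to reduce the semiclassical Sobolev embedding to the classical one \eqref{eq_classical_Sobolev_emb} by a rescaling argument. Given $u \in C^\infty_0(\R^n)$, set $u_h(x) = u(hx)$. The point is that the semiclassical operator $(1-h^2\Delta)^{s/2}$ acting on $u$ becomes, after this rescaling, the classical operator $(1-\Delta)^{s/2}$ acting on $u_h$: indeed $h^2\Delta$ in the $x$--variable corresponds to $\Delta$ in the rescaled variable. More precisely, if $\widehat{u}$ denotes the Fourier transform, then $(1-h^2\Delta)^{s/2}u$ has Fourier transform $(1+h^2|\xi|^2)^{s/2}\widehat{u}(\xi)$, and a change of variables $\xi = \eta/h$ shows this is exactly the rescaled version of $(1-\Delta)^{s/2}$ applied to a dilate of $u$.

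The key steps, in order, are the following. First, introduce the dilation operator and compute how the $L^q$ and $L^p$ norms transform under $u \mapsto u(h\,\cdot)$: one has $\|u(h\,\cdot)\|_{L^q(\R^n)} = h^{-n/q}\|u\|_{L^q(\R^n)}$, and similarly with $p$. Second, verify the intertwining identity $(1-\Delta)^{s/2}\big(u(h\,\cdot)\big)(x) = \big((1-h^2\Delta)^{s/2}u\big)(hx)$, which is immediate on the Fourier side. Third, apply the classical embedding $W^{s,p}(\R^n) \subset L^q(\R^n)$ from \eqref{eq_classical_Sobolev_emb} to the function $u(h\,\cdot)$, which is legitimate since $1<p\le q<\infty$ and $\tfrac1p - \tfrac1q = \tfrac sn$. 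Fourth, unwind the dilation factors: the classical embedding gives $\|u(h\,\cdot)\|_{L^q} \le C\|(1-\Delta)^{s/2}(u(h\,\cdot))\|_{L^p}$; using steps one and two this becomes $h^{-n/q}\|u\|_{L^q} \le C h^{-n/p}\|(1-h^2\Delta)^{s/2}u\|_{L^p}$, so $\|u\|_{L^q} \le C h^{n/q - n/p}\|u\|_{W^{s,p}_{\mathrm{scl}}} = C h^{-s}\|u\|_{W^{s,p}_{\mathrm{scl}}}$, using the relation $\tfrac1p - \tfrac1q = \tfrac sn$, i.e. $n/q - n/p = -s$. This is exactly \eqref{eq_semiclass_emb}.

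There is essentially no serious obstacle here; this is a standard scaling argument. The only mild technical point to be careful about is that the classical Bessel potential operator $(1-\Delta)^{s/2}$ is defined via Fourier multipliers on $\mathcal{S}'(\R^n)$, so one should check that the dilation commutes with it in the stated way for $u \in C^\infty_0$, which is routine since $C^\infty_0 \subset \mathcal{S} \subset \mathcal{S}'$ and dilation is continuous on $\mathcal{S}$. One should also note that the constant $C$ in \eqref{eq_semiclass_emb} is the classical Sobolev constant and in particular is independent of $h \in (0,1]$, which is the whole point of the semiclassical formulation. If desired, the dual inclusion $L^p_{\mathrm{scl}} \subset W^{-s,q}_{\mathrm{scl}}$ follows by the same argument or by duality, though only the stated form \eqref{eq_semiclass_emb} is needed in the sequel.
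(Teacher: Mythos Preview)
Your proof is correct. The paper takes a different route: it writes down the Schwartz kernel of $(1-h^2\Delta)^{-s/2}$, shows the pointwise bound $|K(x,y)|\le Ch^{-s}|x-y|^{-n+s}$ (by a direct estimate of the oscillatory integral, quoting \cite[Lemma~0.3.8]{Sogge_book_Fourier}), and then invokes the Hardy--Littlewood--Sobolev inequality for the Riesz potential $|x-y|^{-n+s}$. Your argument is the cleaner one: by dilating $u\mapsto u(h\,\cdot)$ you reduce directly to the classical embedding \eqref{eq_classical_Sobolev_emb} as a black box, and the factor $h^{-s}$ drops out transparently from the mismatch $h^{n/q-n/p}$ between the scaling exponents of $L^q$ and $L^p$. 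The paper's approach has the minor advantage of making the kernel explicit (which could be useful if one needed more refined information), but for the purposes of this lemma your rescaling argument is both shorter and more conceptual, and it makes the $h$--independence of the constant $C$ immediate.
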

\begin{proof}
It suffices to consider the case $s>0$.  
We have to show that 
\[
\|  (1-h^2\Delta)^{-\frac{s}{2}}v\|_{L^q(\R^n)}\le C h^{-s} \|v\|_{L^p(\R^n)}, \quad v\in \mathcal{S}(\R^n). 
\]
Let 
\[
K(x,y)=\frac{1}{(2\pi h)^n}\int e^{i(x-y)\cdot\xi/h}(1+\xi^2)^{-\frac{s}{2}}d\xi
\]
be the Schwartz kernel of $(1-h^2\Delta)^{-\frac{s}{2}}$. Noticing that $0<s<n$ and applying  \cite[Lemma 0.3.8]{Sogge_book_Fourier}, we get 
\[
|K(x,y)|\le Ch^{-s} |x-y|^{-n+s}, \quad x\ne y.
\]
The result follows as explained in the proof of \cite[Theorem 0.3.7]{Sogge_book_Fourier}. 
 \end{proof}

By duality from \eqref{eq_semiclass_emb}, we get  
\begin{equation}
\label{eq_semiclass_emb_dual}
\|u\|_{W^{-s,q}_{\text{scl}}(\R^n)}\le Ch^{-s}\|u\|_{L^p(\R^n)}, \quad u\in C^\infty_0(\R^n).
\end{equation}
Here 
\[
\|u\|_{W^{-s,q}_{\text{scl}}(\R^n)}=\sup_{0\ne v\in W^{s,p}(\R^n)}\frac{|\langle u,v\rangle_{W^{-s,q}_{\text{scl}}(\R^n), W^{s,p}_{\text{scl}}(\R^n)} |}{\|v\|_{W^{s,p}_{\text{scl}}(\R^n)}}.
\]

When proving Theorem \ref{thm_main}, we shall need the calculus of semiclassical pseudodifferential operators on a compact smooth Riemannian manifold  $M$. Let us proceed by recalling some definitions and facts about them, following \cite[Chapter 14]{Zworski_book}. First recall the standard class of symbols on $T^*M$,
\[
S^m(T^*M)=\{a(x,\xi;h)\in C^\infty(T^*M\times(0,1]):|\p_x^\alpha\p_\xi^\beta a(x,\xi;h)|\le C_{\alpha\beta}\langle \xi\rangle^{m-|\beta|}\},
\]
$m\in \R$.  Let us fix a choice of the quantization map 
\[
\text{Op}_h^w: S^m(T^*M)\to \Psi^m(M), 
\]
given by the Weyl quantization in local coordinate charts, identified with convex domains in $\R^n$, with the associated symbol map,
\[
\sigma: \Psi^m(M)\to S^m(T^*M)/hS^{m-1}(T^*M).
\]

We have the following properties, enjoyed by the semiclassical pseudodifferential operators on $M$. 
\begin{prop} 
\label{prop_composition_of_operators}
Assume that $a\in S^{m_1}(T^*M)$ and $b\in S^{m_2}(T^*M)$. Then 
\begin{itemize}
\item[(i)] 
\[
\emph{\text{Op}}_h^w (a)\emph{\text{Op}}_h^w(b)-\emph{\text{Op}}_h^w(ab)\in h\emph{\text{Op}}_h^w(S^{m_1+m_2-1}),\]
\item[(ii)] 
\[
[\emph{\text{Op}}_h^w (a),\emph{\text{Op}}_h^w(b)]-\frac{h}{i}\emph{\text{Op}}_h^w(H_a(b))\in h^2\emph{\text{Op}}_h^w(S^{m_1+m_2-2}),
\]
where 
\[
H_a=\nabla_\xi a\cdot \nabla_x-\nabla_x a\cdot\nabla_\xi
\]
is the Hamilton vector field of $a$,
\item[(iii)] 
\[
( \emph{\text{Op}}_h^w (a))^*-\emph{\text{Op}}_h^w (\overline{a})\in h \emph{\text{Op}}_h^w (S^{m_1-1}),
\]
where $( \emph{\text{Op}}_h^w (a))^*$ is the formal $L^2$-adjoint of the operator $\emph{\text{Op}}_h^w (a)$.
\end{itemize}
\end{prop}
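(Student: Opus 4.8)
The plan is to deduce all three statements from the corresponding facts of the Weyl calculus on $\R^n$, by means of a partition of unity subordinate to a finite atlas of $M$, using two standard inputs: (a) the Euclidean Moyal composition formula, and (b) the fact that under a change of coordinates the Weyl quantization of a symbol is, up to an operator of order $h^2$ times a $\Psi$DO of lower order, the Weyl quantization of the pulled-back symbol (so that both the class $S^m(T^*M)$ and the symbol map modulo $hS^{m-1}$ are coordinate invariant). Concretely, I would fix a finite cover $\{U_\iota\}$ of $M$ by coordinate charts $\kappa_\iota$, functions $\varphi_\iota\in C^\infty_0(U_\iota)$ with $\sum_\iota\varphi_\iota^2=1$, and $\psi_\iota\in C^\infty_0(U_\iota)$ equal to $1$ near $\supp\varphi_\iota$, so that each of $\text{Op}_h^w(a)$ and $\text{Op}_h^w(b)$ is a finite sum of operators, each conjugate through some $\kappa_\iota$ to the Weyl quantization on $\R^n$ of a symbol of the same order supported in $\kappa_\iota(U_\iota)$.

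For (i): in the product $\text{Op}_h^w(a)\,\text{Op}_h^w(b)$, the terms coming from pairs of charts whose localizing cut-offs have disjoint supports lie in $h^\infty\Psi^{-\infty}(M)$, by the non-stationary phase (pseudolocality) of semiclassical $\Psi$DOs; the remaining terms may be computed in a common chart, where the Euclidean Moyal formula gives $\text{Op}_h^w(a')\text{Op}_h^w(b')-\text{Op}_h^w(a'b')\in h\,\text{Op}_h^w(S^{m_1+m_2-1})$. Reassembling the pieces, and observing that the derivatives falling on the cut-offs $\varphi_\iota,\psi_\iota$ only generate symbols of strictly lower order, one finds the total error in the class $h\,\text{Op}_h^w(S^{m_1+m_2-1})$, which is well defined on $M$ precisely by input (b).

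Statement (ii) follows from (i) by antisymmetrization: for the Weyl quantization the even-order terms of $a\#b-b\#a$ cancel, and the subleading (order-$1$) term is exactly $\tfrac{h}{i}\{a,b\}$ (the Poisson bracket), so $[\text{Op}_h^w(a),\text{Op}_h^w(b)]-\tfrac{h}{i}\text{Op}_h^w(\{a,b\})\in h^3\text{Op}_h^w(S^{m_1+m_2-3})\subset h^2\text{Op}_h^w(S^{m_1+m_2-2})$, where $\{a,b\}=H_a(b)$ is an invariantly defined symbol on $T^*M$. For statement (iii), the Weyl quantization on $\R^n$ enjoys the exact symmetry $(\text{Op}_h^w(a'))^*=\text{Op}_h^w(\overline{a'})$; the sole error in passing to $M$ comes from the partition of unity together with the change of variables (and the choice of volume density defining the $L^2(M)$-pairing), and a direct inspection of these contributions shows it lies in $h\,\text{Op}_h^w(S^{m_1-1})$.

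I expect the one genuinely substantive ingredient to be input (b) — the change-of-variables formula for the Weyl quantization, equivalently the consistency of the chosen quantization map across overlapping charts — which is established by the standard stationary-phase expansion and is exactly where the $h^2$ correction (for conjugations) and the $h$ correction (for the adjoint on $M$) originate; once it and the Euclidean Weyl calculus are granted, the three assertions are a matter of bookkeeping. All of this is carried out in detail in \cite[Chapter 14]{Zworski_book}.
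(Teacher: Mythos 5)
The paper itself offers no proof of this proposition: it is recalled as a standard package of facts about the semiclassical calculus on manifolds, with reference to \cite[Chapter 14]{Zworski_book}. Your overall strategy --- localize to a finite atlas via a partition of unity, invoke the Euclidean Moyal composition formula and the change-of-variables formula for the Weyl quantization, and reassemble --- is exactly the standard route, and your treatment of (i) and (iii) along these lines is sound.

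The argument for (ii), however, contains an overclaim that is not substantiated and, as a by-product, is phrased misleadingly. After antisymmetrizing the Euclidean Moyal expansion you assert that on $M$ one has $[\text{Op}_h^w(a),\text{Op}_h^w(b)]-\frac{h}{i}\text{Op}_h^w(\{a,b\})\in h^3\text{Op}_h^w(S^{m_1+m_2-3})$. The even-order cancellation you invoke is a feature of the exact Weyl calculus on $\R^n$; on a manifold the patched quantization picks up non-canonical corrections --- at order $h^2$ from the stationary-phase change-of-variables and at order $h$ from the cut-offs and from the Riemannian density defining $L^2(M)$, the latter being precisely what produces the error in (iii). Nothing in your sketch shows that these corrections antisymmetrize away to order $h^3$ in the commutator, and in general one should not expect them to: the subprincipal symbol of $\text{Op}_h^w(a)$ on $M$ depends on the atlas, which is exactly why the proposition stops at an $h^2$ bound. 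The correct route, still entirely within your framework, is to push (i) one order further on $M$, namely $\text{Op}_h^w(a)\text{Op}_h^w(b)=\text{Op}_h^w\bigl(ab+\tfrac{h}{2i}\{a,b\}\bigr)+h^2\text{Op}_h^w(S^{m_1+m_2-2})$, and subtract the expression with $a,b$ interchanged: the principal and subprincipal Moyal terms then yield $\frac{h}{i}\text{Op}_h^w(\{a,b\})$, and every manifold correction lands harmlessly in $h^2\text{Op}_h^w(S^{m_1+m_2-2})$. In the same vein, (ii) does not literally \emph{follow from (i)} by antisymmetrization: (i) controls the composition only modulo $h\text{Op}_h^w(S^{m_1+m_2-1})$, and antisymmetrizing that gives nothing better than $[\text{Op}_h^w(a),\text{Op}_h^w(b)]\in h\text{Op}_h^w(S^{m_1+m_2-1})$, which is weaker than what is claimed; one genuinely needs the subprincipal term of the composition, as you implicitly use.
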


Let us also recall the semiclassical microlocalized version of G{\aa}rding's inequality, see \cite{Zworski_book}.
\begin{thm}
\label{thm_Garding_ineq}
Let $\Omega\subset T^*M$ be open bounded and let $a\in S^0(T^*M)$ be such that
\[
a\ge \gamma_0>0\quad \text{on}\quad \Omega.
\]
Let $\chi\in C^\infty_0(\Omega)$. Then for all $h>0$ small enough, and $u\in L^2(M)$, we have
\[
( \emph{\text{Op}}_h^w (a) \emph{\text{Op}}_h^w (\chi)u,  \emph{\text{Op}}_h^w (\chi)u)_{L^2(M)}\ge \frac{\gamma_0}{2}\| \emph{\text{Op}}_h^w (\chi)u\|^2_{L^2(M)}-\mathcal{O}(h^\infty)\|u\|^2_{L^2(M)}.
\]
\end{thm}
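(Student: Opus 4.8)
The plan is to localize the symbol $a$ near $\supp\chi$, replace it by a symbol that is bounded below by $\gamma_0$ on all of $T^*M$, and then read off the estimate from the elementary positivity of an elliptic order-zero semiclassical operator; every $\mathcal{O}(h^\infty)$ loss will be produced by the \emph{pseudolocal} character of the calculus. Concretely, I would first fix nested cut-offs $\chi\prec\tilde\chi\prec\hat\chi$ in $C^\infty_0(\Omega)$ — that is, $0\le\tilde\chi,\hat\chi\le1$ with $\tilde\chi\equiv1$ on a neighbourhood of $\supp\chi$ and $\hat\chi\equiv1$ on a neighbourhood of $\supp\tilde\chi$ — and set
\[
\tilde a:=\hat\chi^2 a+(1-\hat\chi^2)\gamma_0\in S^0(T^*M).
\]
Since $a\ge\gamma_0$ on $\supp\hat\chi\subset\Omega$, one has $\tilde a\ge\gamma_0$ everywhere on $T^*M$; moreover $a-\tilde a=(1-\hat\chi^2)(a-\gamma_0)$ has support disjoint from $\supp\tilde\chi$, and $\supp(1-\tilde\chi)$ is disjoint from $\supp\chi$.

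Next I would invoke the pseudolocal property of the semiclassical calculus: iterating Proposition \ref{prop_composition_of_operators}(i), the terms of a composed symbol are supported in the intersection of the supports of the factors, so a composition of two operators whose symbols have disjoint supports is $\mathcal{O}(h^\infty)$ as an operator on $L^2(M)$. This gives $(I-\text{Op}_h^w(\tilde\chi))\text{Op}_h^w(\chi)=\mathcal{O}(h^\infty)$ and $\text{Op}_h^w(a-\tilde a)\text{Op}_h^w(\tilde\chi)=\mathcal{O}(h^\infty)$ in operator norm. Writing $v=\text{Op}_h^w(\chi)u$ and $v_1=\text{Op}_h^w(\tilde\chi)v$, the first bound yields $v=v_1+r$ with $\|r\|_{L^2}=\mathcal{O}(h^\infty)\|u\|_{L^2}$, and then — using the uniform $L^2(M)$-boundedness of $\text{Op}_h^w(a)$ and $\text{Op}_h^w(\chi)$ — bilinearity together with the second bound reduces the left-hand side to
\[
(\text{Op}_h^w(a)v,v)=(\text{Op}_h^w(\tilde a)v_1,v_1)+\mathcal{O}(h^\infty)\|u\|^2_{L^2}.
\]

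For the positivity of $\text{Op}_h^w(\tilde a)$ I would observe that $\tilde a-\tfrac34\gamma_0\ge\tfrac14\gamma_0>0$ on $T^*M$, so $c:=(\tilde a-\tfrac34\gamma_0)^{1/2}$ is a real symbol in $S^0(T^*M)$; Proposition \ref{prop_composition_of_operators}(i),(iii) together with the uniform $L^2$-boundedness of order $-1$ operators give $\text{Op}_h^w(c)^*\text{Op}_h^w(c)=\text{Op}_h^w(\tilde a)-\tfrac34\gamma_0 I+hE$ with $\|E\|_{L^2\to L^2}\le C$. Since $\text{Op}_h^w(c)^*\text{Op}_h^w(c)\ge0$, this forces $\text{Op}_h^w(\tilde a)\ge(\tfrac34\gamma_0-Ch)I\ge\tfrac12\gamma_0 I$ once $h\le\gamma_0/(4C)$, hence $(\text{Op}_h^w(\tilde a)v_1,v_1)\ge\tfrac12\gamma_0\|v_1\|^2_{L^2}$. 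Combining this with the previous paragraph and with $\|v_1\|^2\ge\|v\|^2-2\|v\|\|r\|\ge\|v\|^2-\mathcal{O}(h^\infty)\|u\|^2$ produces $(\text{Op}_h^w(a)\text{Op}_h^w(\chi)u,\text{Op}_h^w(\chi)u)\ge\tfrac12\gamma_0\|\text{Op}_h^w(\chi)u\|^2_{L^2}-\mathcal{O}(h^\infty)\|u\|^2_{L^2}$, which is the assertion.

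This is a routine statement, so I do not expect a genuine obstacle. The two points that need attention are the support bookkeeping that converts pseudolocality of the calculus into the $\mathcal{O}(h^\infty)$ remainders (here it is essential that the errors in Proposition \ref{prop_composition_of_operators}(i) can be taken with symbols supported in the intersection of the two symbol supports), and the fact that one must subtract $\tfrac34\gamma_0$ rather than $\tfrac12\gamma_0$ before taking the square root, so that the unavoidable $\mathcal{O}(h)$ error of the symbol calculus is absorbed into the gap between $\gamma_0$ and $\tfrac12\gamma_0$ instead of surfacing in the final estimate.
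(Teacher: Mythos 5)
Your argument is correct and follows the same essential strategy as the paper: modify $a$ to a symbol $\tilde a$ that equals $a$ near $\supp\chi$ and is bounded below by $\gamma_0$ on all of $T^*M$, then apply the global positivity of $\text{Op}_h^w(\tilde a)$ to $\text{Op}_h^w(\chi)u$, absorbing the difference $\text{Op}_h^w(a-\tilde a)\text{Op}_h^w(\chi)$ into the $\mathcal{O}(h^\infty)$ remainder by disjointness of supports. The differences are minor and presentational: the paper uses the additive modification $\tilde a=a+C\psi$ with a single cutoff and a large constant $C$, whereas you use the convex combination $\hat\chi^2 a+(1-\hat\chi^2)\gamma_0$ with a nest of three cutoffs (the extra $\tilde\chi$ is harmless but not needed — the paper reaches the same conclusion directly with $\chi$); and the paper simply cites \cite[Theorem 4.30]{Zworski_book} for the G\aa rding inequality applied to $\tilde a$, while you reprove it self-containedly via the square-root construction $c=(\tilde a-\tfrac34\gamma_0)^{1/2}$, which is in fact how that cited theorem is established. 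One small point to be aware of: since the Weyl quantization of a real symbol on a manifold is self-adjoint only modulo $h\text{Op}_h^w(S^{-1})$, the quadratic form $(\text{Op}_h^w(\tilde a)v_1,v_1)$ may have an $\mathcal{O}(h)\|v_1\|^2$ imaginary part; the inequality you derive from $\|\text{Op}_h^w(c)v_1\|^2\ge0$ is an inequality for the real part, which is what is meant in the statement, so this does not affect the validity of the argument.
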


\begin{proof}
Let $\psi\in C^\infty(T^*M;[0,1])$ be such that $\psi=0$ near $\supp (\chi)$ and $\psi=1$ near $T^*M\setminus\Omega$.  Setting $\tilde a=a +C \psi\in S^0(T^*M)$ with the constant $C=\gamma_0+\sup_{T^*M}|a(x,\xi)|$, we have that $\tilde a=a$ near $\supp(\chi)$ and $\tilde a\ge \gamma_0>0$ on $T^*M$.  Applying G{\aa}rding's inequality \cite[Theorem 4.30]{Zworski_book} to $\tilde a$, we get
\[
(\text{Op}_h^w (\tilde a) \text{Op}_h^w (\chi)u, \text{Op}_h^w (\chi)u)_{L^2(M)}\ge \frac{\gamma_0}{2}\| \text{Op}_h^w (\chi)u\|^2_{L^2(M)},
\] 
for all $h>0$ small enough. 
This together with the fact that 
\[
(\text{Op}_h^w (\tilde a)-\text{Op}_h^w (a))\text{Op}_h^w (\chi) =\mathcal{O}(h^\infty):L^2(M)\to L^2(M)
\]
shows the claim. 
\end{proof}

\section{Laplace operator. Proof of Theorem \ref{thm_main}}

\label{sec_Laplace}

As a warmup, we shall give in this section a proof of Theorem~\ref{thm_main}, using a semiclassical point of view.
When  establishing  the resolvent estimate \eqref{eq_resolvent_est_laplacian_z},   for $u\in C^\infty(M)$, we write
\begin{equation}
\label{eq_2_1}
(-\Delta_g-z^2)u=f.
\end{equation}
The proof of the estimate \eqref{eq_resolvent_est_laplacian_z} will consist of several different cases, depending on the location of the spectral parameter $z$ in the region $\Xi_\delta=\{z\in \C: \Im z\ge \delta\}$.  

\subsection{Easy spectral regions}

 As the following proposition shows, in some cases the uniform resolvent estimate  \eqref{eq_resolvent_est_laplacian_z} is a direct consequence of \textit{a priori} estimates for the equation \eqref{eq_2_1}. 

\begin{prop}
There exists $C>0$ such that for any $z\in \mathbb{C}$,  $\emph{\Im} (z^2) \neq 0$,  we have  
\begin{equation}
\label{eq.ellipt1}
 \|u\|_{L^{\frac{ 2n} { n-2}}(M)}\leq C\bigg(\frac{|z|^2 +1} { |\emph{\Im} (z^2)|}+1\bigg)   \|(-\Delta_g-z^2) u\|_{L^{\frac{ 2n} { n+2}}(M)},
\end{equation}
and for any $z\in \mathbb{C}$,  $\emph{\Re} (z^2) < 0$,  we have
\begin{equation}\label{eq.ellipt2}
 \|u\|_{L^{\frac{ 2n} {n-2}}(M)}
\leq C\frac{1} { \min (1, - \emph{\Re} (z^2))}   \|(-\Delta_g-z^2) u\|_{L^{\frac{ 2n} { n+2}}(M)}.
\end{equation}

\end{prop}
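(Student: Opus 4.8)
The plan is to prove both inequalities by combining the standard elliptic a priori estimate for $-\Delta_g - z^2$ in $L^2$-based Sobolev spaces with the Sobolev embeddings $H^1(M)\hookrightarrow L^{\frac{2n}{n-2}}(M)$ and, dually, $L^{\frac{2n}{n+2}}(M)\hookrightarrow H^{-1}(M)$. The point is that on a \emph{compact} manifold the spectral parameter $z^2$ lives far from the spectrum of $-\Delta_g$ precisely in the two regimes described (either $\Im(z^2)\neq 0$, so $z^2$ avoids the real axis where $\mathrm{Spec}(-\Delta_g)$ sits, or $\Re(z^2)<0$, so $z^2$ is to the left of the spectrum), and the distance to the spectrum is controlled from below by $|\Im(z^2)|$ in the first case and by $|\Re(z^2)|$ (or a fixed positive constant, whichever is smaller) in the second.

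For \eqref{eq.ellipt1}: writing $(-\Delta_g - z^2)u = f$ and pairing with $u$ in $L^2(M)$, the imaginary part gives $|\Im(z^2)|\,\|u\|_{L^2}^2 = |\Im\langle f,u\rangle| \le \|f\|_{H^{-1}}\|u\|_{H^1}$; the real part together with $\|\nabla u\|_{L^2}^2 = \Re\langle f,u\rangle + \Re(z^2)\|u\|_{L^2}^2$ lets one bound $\|u\|_{H^1}^2 \lesssim \|f\|_{H^{-1}}\|u\|_{H^1} + (|z|^2+1)\|u\|_{L^2}^2$. Feeding in the $L^2$ bound $\|u\|_{L^2}^2 \le |\Im(z^2)|^{-1}\|f\|_{H^{-1}}\|u\|_{H^1}$ and absorbing, one obtains $\|u\|_{H^1} \lesssim \bigl(\frac{|z|^2+1}{|\Im(z^2)|}+1\bigr)\|f\|_{H^{-1}}$. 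Then Sobolev embedding on both ends converts this into \eqref{eq.ellipt1}. For \eqref{eq.ellipt2}, when $\Re(z^2)<0$ the operator is coercive: $\Re\langle (-\Delta_g - z^2)u, u\rangle = \|\nabla u\|_{L^2}^2 - \Re(z^2)\|u\|_{L^2}^2 \ge \min(1,-\Re(z^2))\,\|u\|_{H^1}^2$, so $\|u\|_{H^1}^2 \le \min(1,-\Re(z^2))^{-1}\|f\|_{H^{-1}}\|u\|_{H^1}$, giving $\|u\|_{H^1}\le \min(1,-\Re(z^2))^{-1}\|f\|_{H^{-1}}$, and again Sobolev embedding closes the argument.

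There is no serious obstacle here; this is a soft argument. The only mildly delicate point is bookkeeping the constants so that the final bound has exactly the stated form — in particular, in \eqref{eq.ellipt1} one must be slightly careful that the $\|u\|_{H^1}$ factor appearing on the right after the energy identity is genuinely absorbable, which requires first deriving the $L^2$ estimate and substituting it, rather than trying to close everything in one step. One should also note that the estimates are stated for smooth $u$, so all pairings and integrations by parts are legitimate without any density argument, and the compactness of $M$ guarantees the Sobolev embedding constants are uniform (no boundary terms, no decay issues at infinity).
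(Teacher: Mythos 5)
Your argument is correct and coincides with the paper's own proof: both start from the quadratic form identity $\|\nabla_g u\|_{L^2}^2 - z^2\|u\|_{L^2}^2 = (f,u)_{L^2}$, extract the $L^2$ bound from the imaginary part and the gradient bound from the real part, substitute the former into the latter to control $\|u\|_{H^1}$, use coercivity for the case $\Re(z^2)<0$, and conclude via the Sobolev embedding $H^1(M)\hookrightarrow L^{\frac{2n}{n-2}}(M)$ and its dual.
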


\begin{proof}
Using the formulation of \eqref{eq_2_1} in terms of quadratic forms, we get   
\begin{equation}
\label{eq_elliptic_3}
\|\nabla_g u\|_{L^2(M)}^2-z^2\|u\|_{L^2(M)}^2=(f,u)_{L^2(M)},
\end{equation}
where $\nabla_g$ is the gradient operator with respect to the metric $g$. 
We deduce that
\[
\begin{cases}
 & |\Im (z^2)|  \| u\|_{L^2(M)} ^2 \leq | ( f, u)_{L^2(M)}| \leq \| u\|_{H^1(M)} \| f\|_{H^{-1}(M)},\\
 &\| \nabla_g u\|_{L^2(M)}^2 \leq | z|^2\| u\|_{L^2(M)} ^2 +    \| u\|_{H^1(M)} \| f\|_{H^{-1}(M)},
 \end{cases} 
 \]
 and therefore, 
 \[
 \| u\|_{H^1}^2 = \| \nabla_g u\|_{L^2}^2 + \| u\|_{L^2} ^2 \leq  \bigg(\frac{|z|^2 +1} { |\Im (z^2)|}+1\bigg)  \| u\|_{H^1} \| f\|_{H^{-1}}.
 \]
This bound together with the classical Sobolev embedding  $H^1(M)\hookrightarrow  L^{\frac{ 2n} {n-2}}(M)$, and its dual $L^{\frac {2n} {n+2} }(M) \hookrightarrow H^{-1}(M)$ imply the estimate \eqref{eq.ellipt1}. 

To get \eqref{eq.ellipt2},  we conclude from \eqref{eq_elliptic_3} that 
\[
\min(1,-\Re(z^2))\|u\|_{H^1(M)}^2\leq\|\nabla_g u\|_{L^2(M)}^2-\Re (z^2)\|u\|_{L^2(M)}^2=\Re (f,u)_{L^2(M)},
\]
 and proceed similarly to the derivation of \eqref{eq.ellipt1}. 
 \end{proof}

Now when $z\in \Xi_\delta$ and $\Im z\ge 2|\Re z|$,  writing $z^2=(\Re z)^2-(\Im z)^2+2i \Re z\Im z$, we see that  the uniform resolvent estimate \eqref{eq_resolvent_est_laplacian_z} in this region is a consequence of the a priori estimate \eqref{eq.ellipt2}. Next, in the region where $\Im z=\delta$ and $\frac{\delta}{2}\le |\Re z|\le C$, with $C>0$ being a constant, the uniform resolvent estimate \eqref{eq_resolvent_est_laplacian_z}  follows from the a priori estimate \eqref{eq.ellipt1}. 

When establishing the uniform resolvent estimate \eqref{eq_resolvent_est_laplacian_z},  the most crucial region is therefore given by $\Im z=\delta$, $|\text{Re}\,z|$ is large. Assuming that we have proved \eqref{eq_resolvent_est_laplacian_z} in this region, let us conclude the proof of Theorem \ref{thm_main}.  To that end, it remains to establish the estimate \eqref{eq_resolvent_est_laplacian_z}  in the sectors $\delta< \Im z< 2 \Re z$ and $\delta< \Im z<  -2 \Re z$. Without loss of generality we shall consider the sector  $\Pi=\{z\in \C:\delta< \Im z< 2 \Re z\}$.  Let $u,v\in C^\infty(M)$ be fixed and let 
\[
F(z)=( (-\Delta_g-z^2)^{-1} u, v)_{L^2(M)}, \quad z\in \Pi.
\]
The function $F$ is analytic in $\Pi$ and continuous in $\overline{\Pi}$. For $z\in \Pi$, we have 
\[
|F(z)|\le \frac{1}{|\Im (z^2)|}\|u\|_{L^2(M)}\|v\|_{L^2(M)}\le \frac{1}{\delta^2}\|u\|_{L^2(M)}\|v\|_{L^2(M)}.
\]
For $z\in\p \Pi$, we also have with a constant $C>0$, 
\begin{equation}
\label{eq_PLMP}
|F(z)|\le C\|u\|_{L^{\frac{2n}{n+2}}(M)}\|v\|_{L^{\frac{2n}{n+2}}(M)},
\end{equation}
and hence, the Phragm\'en-Lindel\"of principle gives us the estimate \eqref{eq_PLMP} for all $z\in \Pi$. Now we get
\begin{align*}
\| (-\Delta_g-z^2)^{-1} u\|_{L^{\frac{2n}{n-2}}(M)}&=\sup_{v\in C^\infty(M),v\ne 0} \frac{|( (-\Delta_g-z^2)^{-1} u, v)_{L^2}|}{\|v\|_{L^{\frac{2n}{n+2}}(M)}}\\
&\le C\|u\|_{L^{\frac{2n}{n+2}}(M)}.
\end{align*}
This completes the proof of the uniform resolvent estimate \eqref{eq_resolvent_est_laplacian_z}  in the sector $\Pi$, and hence, the proof of Theorem \ref{thm_main}, once the the uniform estimate in the crucial region has been established.

\subsection{Analysis in the crucial spectral region} Let us assume here that $\Im z=\delta>0$ and $|\text{Re}\,z|$ is large. Then it will be convenient to  make a semiclassical reduction in \eqref{eq_2_1} so that we write 
\[
|\text{Re}\, z|=\frac{1}{h},
\]
where $0<h\ll 1$ is a semiclassical parameter.  It follows from \eqref{eq_2_1} that 
\begin{equation}
\label{eq_2_2}
(-h^2\Delta_g -1) u =h^2f+ h \alpha u, \quad \alpha:=\pm 2i\delta-h\delta^2.
\end{equation}

First multiplying \eqref{eq_2_2} by $\overline{u}$, integrating over the manifold, taking  the imaginary part,  and using 
H\"older's inequality, we get the following a priori estimate,
\begin{equation}
\label{eq_2_3_0}
\|u\|^2_{L^2(M)}\le \frac{h}{2\delta} \|f\|_{L^{\frac{2n}{n+2}}(M)}\|u\|_{L^{\frac{2n}{n-2}}(M)}.
\end{equation}
Using the Peter-Paul inequality
\[
ab\le \frac{a^2}{2\varepsilon}+\frac{\varepsilon b^2}{2}\le \bigg(\frac{a}{\sqrt{2\varepsilon}}+\frac{\sqrt{\varepsilon} b}{\sqrt{2}}\bigg)^2, \quad a,b\ge 0, \quad \varepsilon >0,
\]
we obtain from \eqref{eq_2_3_0} the following estimate, 
\begin{equation}
\label{eq_2_3}
\|u\|_{L^2(M)}\le \frac{h^{\frac{1}{2}}}{\sqrt{2\delta}}\bigg( \frac{\|f\|_{L^{\frac{2n}{n+2}}(M)}}{\sqrt{2\varepsilon}}+\frac{\sqrt{\varepsilon}  \|u\|_{L^{\frac{2n}{n-2}}(M)} }{\sqrt{2}} \bigg), \quad \varepsilon>0,
\end{equation}
needed in the sequel.

The semiclassical principal symbol of the operator $-h^2\Delta_g -1$ is given by $p_0(x,\xi)=|\xi|_g^2-1\in C^\infty(T^*M)$. Set
\[
\Sigma:=\{(x,\xi)\in T^*M: p_0(x,\xi)=0\}.
\]
 We have that 
\begin{equation}
\label{eq_2_4}
\p_\xi p_0(x,\xi)\ne 0 \quad \text{for all}\quad (x,\xi)\in \Sigma\subset T^*M.
\end{equation}
Then for each $x\in M$,  the hypersurface
\[
\Sigma_x:=\{ \xi\in T^*_xM : p_0(x,\xi)=0\}
\]
is the unit sphere and hence, 
\begin{equation}
\label{eq_2_5}
\Sigma_x \text{ has a nondegenerate second fundamental form at each point $\xi$}.
\end{equation}  

We shall now follow an argument of  \cite{BGT} (see also \cite{Koch_Tataru_Zworski}). 
Let $(x_0,\xi_0)\in \Sigma$. In view of  \eqref{eq_2_4}, we can assume that $\p_{\xi_1} p_0(x_0,\xi_0)\ne 0$.    Thus,  the implicit function theorem implies that there is a neighborhood of $(x_0,\xi_0)$ in $T^*M$ such that the following factorization
\begin{equation}
\label{eq_2_6}
p_0(x,\xi)=e(x,\xi)(\xi_1-a(x,\xi'))
\end{equation}
holds in this neighborhood. Here $\xi=(\xi_1,\xi')$, $a(x,\xi')$, $e(x,\xi)$ are real-valued and $e(x_0,\xi_0)\ne 0$. Then in a possibly smaller neighborhood of $(x_0,\xi_0)$, we have 
\begin{equation}
\label{eq_2_7}
|e(x,\xi)|\ge e_0>0
\end{equation}
for some $e_0$ fixed.   Furthermore, it follows from \eqref{eq_2_5} that 
\begin{equation}
\label{eq_2_8}
\p^2_{\xi'} a(x_0,\xi_0') \text{ is nondegenerate}.
\end{equation}

Since the energy surface $\Sigma$ is compact in $T^*M$, there are a finite number of points $(x^{(j)},\xi^{(j)})\in \Sigma$ and their neighborhoods $V_{(x^{(j)},\xi^{(j)})}$ in $T^*M$ such that 
\begin{equation}
\label{eq_2_9}
\Sigma\subset \cup_{j=1}^N V_{(x^{(j)},\xi^{(j)})},
\end{equation}
and the conditions \eqref{eq_2_6}, \eqref{eq_2_7} and \eqref{eq_2_8} hold in each $V_{(x^{(j)},\xi^{(j)})}$, with $\xi_1$ possibly replaced by some other $\xi_j$ in the factorization \eqref{eq_2_6}.

Let $\rho_j\in C^\infty_0(V_{(x^{(j)},\xi^{(j)})}; [0,1])$, $1\le j\le N$, be a partition of unity, associated to the cover \eqref{eq_2_9}, so that 
\[
\sum_{j=1}^N\rho_j=1 \quad \text{near} \quad \Sigma.
\]
Let $\chi\in C^\infty_0(T^*M; [0,1])$ be such that $\supp(\chi)$ is contained in a neighborhood of $\Sigma$, with $\chi=1$ in a smaller neighborhood of $\Sigma$.  We have
\begin{equation}
\label{eq_2_10_0}
\chi=\sum_{j=1}^N \chi_j,\quad \chi_j:=\rho_j\chi.
\end{equation}

Associated to $\chi_j$ is the corresponding $h$-pseudodifferential operator  $\text{Op}_h^w(\chi_j)\in \Psi^{-\infty}(M)=\cap_N \Psi^N(M)$. Then 
\[
WF_h(\text{Op}_h^w(\chi_j))=\supp(\chi_j). 
\]
Here $WF_h$ stands for the semiclassical wave front set, see \cite{Zworski_book}.

Let $\kappa_i:U_i\to V_i\subset \R^n$ be a set of local charts in $M$ so that $U_i\subset M$ is open and $\cup_{i=1}^L U_i=M$. Let $\varphi_i\in C^\infty_0(U_i)$, $1\le i\le L$, be a partition of unity, associated to the cover $U_i$ so that 
\[
\sum_{i=1}^L\varphi_i=1 \quad \textrm{on}\quad M. 
\] 

Using \eqref{eq_2_2}, we get
\[
(-h^2\Delta_g-1)\varphi_i u=h^2\varphi_i f+h\alpha \varphi_i u+[-h^2\Delta_g,\varphi_i]u.
\]
It follows that 
\begin{equation}
\label{eq_2_11}
(-h^2\Delta_g-1)\text{Op}_h^w(\chi_j)\varphi_i u=h^2\text{Op}_h^w(\chi_j) \varphi_i f+h B_j u, 
\end{equation}
where 
\begin{equation}
\label{eq_B}
\begin{aligned}
B_j=&\alpha \text{Op}_h^w(\chi_j)\varphi_i+ \frac{1}{h} \text{Op}_h^w(\chi_j) [-h^2\Delta_g,\varphi_i]\\
&+ \frac{1}{h}[-h^2\Delta_g,\text{Op}_h^w(\chi_j)]\varphi_i \in \Psi^{-\infty}(M),
\end{aligned}
\end{equation} 
and 
\[
WF_h(B_j)\subset WF_h(\text{Op}_h^w(\chi_j))\subset\subset V_{(x^{(j)},\xi^{(j)})}.
\]
Let $\psi_i, \tilde \varphi_i\in C^\infty_0(U_i)$ be such that $\tilde\varphi_i=1$ near $\supp(\varphi_i)$ and  $\psi_i=1$ near $\supp (\tilde \varphi_i)$. Then  we have, see \cite[Chapter 14]{Zworski_book},
\begin{equation}
\label{eq_2_11_def_on_R}
\psi_i\text{Op}_h^w(\chi_j)\varphi_i=\psi_i\kappa_i^* \tilde{\chi}_{j,i}^w(y,hD_{y};h) (\kappa_i^{-1})^*\varphi_i, 
\end{equation}
where $\tilde{\chi}_{j,i}^w(y,hD_{y};h)$ is the Weyl pseudodifferential operator on $\R^n$ with the symbol $\tilde{\chi}_{j,i}\in S^{-N}(\R^{2n})$ for any $N$, and
the principal symbol 
\begin{equation}
\label{eq_principal_symbol_m-r}
\sigma\big(\tilde{\chi}_{j,i}^w(y,hD_{y};h)  \big) (y,\eta)=\chi_j(\kappa_i^{-1}(y), (\p\kappa_i(\kappa_i^{-1}(y)))^T\eta).
\end{equation}
Furthermore, 
\[
WF_h(\tilde{\chi}_{j,i}^w(y,hD_{y};h))=\supp(\chi_j(\kappa_i^{-1}(y), (\p\kappa_i(\kappa_i^{-1}(y)))^T\eta)).
\] 
We also have
\begin{equation}
\label{eq_pseu_rest}
(1-\psi_i)\text{Op}_h^w(\chi_j)\varphi_i=\mathcal{O}(h^\infty): H^{-N}(M)\to H^N(M),
\end{equation}
for all $N$, see   \cite[Chapter 14]{Zworski_book}.

Using \eqref{eq_2_11_def_on_R}, \eqref{eq_pseu_rest} and \eqref{eq_2_1}, we obtain from  \eqref{eq_2_11} and \eqref{eq_B} that 
\begin{equation}
\label{eq_R_n_main}
\begin{aligned}
(-h^2\Delta_g-1)\psi_i\kappa_i^*\tilde{\chi}_{j,i}^w(y,hD_{y};h) (\kappa_i^{-1})^*\varphi_i u=h^2 \psi_i\kappa_i^*\tilde{\chi}_{j,i}^w(y,hD_{y};h) (\kappa_i^{-1})^*\varphi_i f\\
+h\psi_i\kappa_i^*\tilde{B}_{j,i} (\kappa_i^{-1})^*\tilde \varphi_i u+R_0u,
\end{aligned}
\end{equation}
where $\tilde{B}_{j,i}\in \text{Op}_h^w\big(S^{-N}(\R^{2n})\big)$ for all $N$,  and $R_0=\mathcal{O}(h^\infty): H^{-N}(M)\to H^N(M)$ for all $N$. 

Committing an error $\mathcal{O}(h^\infty)$, in what follows we shall view, as we may,  \eqref{eq_R_n_main} as an equation on $\R^n$. We shall therefore be concerned with the following situation. Let $\varphi,\tilde\varphi,\psi\in C^\infty_0(\R^n)$ be such that $\tilde \varphi=1$ near $\supp(\varphi)$, and $\psi=1$ near $\supp(\tilde\varphi)$. Let $g$ be a $C^\infty$ Riemannian metric on $\R^n$ such that $\p_{x}^\alpha g\in L^\infty$ for all $\alpha$, and let $p_0(x,\xi)=\sum_{i,j=1}^ng^{ij}(x)\xi_i\xi_j-1$. Let $\chi_0\in C^\infty_0(\R^{2n})$ be such that $\supp(\chi_0)$ is in a neighborhood of $(x_0,\xi_0)\in p_0^{-1}(0)$ and such that near $\supp(\chi_0)$, we have
\begin{equation}
\label{eq_2_6_new}
p_0(x,\xi)=e(x,\xi) (\xi_1-a(x,\xi')), 
\end{equation}
where  $\xi=(\xi_1,\xi')$ and $e$ satisfies 
\begin{equation}
\label{eq_2_7_R_n}
|e(x,\xi)|\ge e_0>0. 
\end{equation}
Furthermore, 
\begin{equation}
\label{eq_nondegenerate_a}
\p_{\xi'}^2 a(x,\xi')\text{ is nondegenerate near }\supp(\chi_0), 
\end{equation}
see \eqref{eq_2_8}.   Let $B\in \text{Op}_h^w\big( S^{-N}(\R^{2n})\big)$ for all $N$, and let $\chi=\chi(x,\xi;h)\in  S^{-N}(\R^{2n})$ for all $N$ be such that 
\[
\chi=\chi_0+\mathcal{O}(h)\quad \text{in}\quad S^{-N}(\R^{2n}),
\]
and 
\begin{equation}
\label{eq_wavefront_set}
WF_h(\chi^w(x,hD_x;h))=\supp(\chi_0).
\end{equation} 
When $u\in L^2(\R^n)\cap C^\infty(\R^n)$, consider the equation,
\begin{equation}
\label{eq_R_n_main_new}
(-h^2\Delta_g-1)\psi\chi^w(x,hD_x;h)\varphi u=h^2\psi\chi^w(x,hD_x;h)\varphi f+h\psi B\tilde \varphi u+R_0u,
\end{equation}
where 
\begin{equation}
\label{eq_non_local_R_0}
\|R_0u\|_{L^2(\R^n)}=\mathcal{O}(h^\infty)\|u\|_{L^2(\R^n)}.
\end{equation} 

Let us now extend $e(x,\xi)$ arbitrarily to a symbol in $S^0(\R^{2n})$ with the bound \eqref{eq_2_7_R_n} in all of $\R^{2n}$, and  let us also extend $a(x_1,x',\xi')$ to a real-valued element of $C^\infty_0(\R_{x_1}, S^{0}(\R^{2(n-1)}))$.

Microlocal factorization \eqref{eq_2_6_new} yields that 
\begin{align*}
(-h^2\Delta_g-1)\chi^w(x,hD_x;h)=&e^w(x,hD_x)(hD_{x_1}-a^w(x,hD_{x'}))\chi^w(x,hD_x;h)\\
&+hR^w(x,hD_{x};h),
\end{align*}
where $R^w\in \text{Op}_h^w(S^{-N}(\R^{2n}))$ for all $N>0$.  
This implies that 
\begin{equation}
\label{eq_2_12}
\begin{aligned}
(-h^2\Delta_g-1)\psi\chi^w(x,hD_x;h)=&e^w(x,hD_x)(hD_{x_1}-a^w(x,hD_{x'}))\psi \chi^w(x,hD_x;h)\\
&+hR_1^w(x,hD_{x};h),
\end{aligned}
\end{equation}
where $R_1^w(x,hD_{x};h)\in  \text{Op}_h^w(S^{-N}(\R^{2n}))$ for all $N$. 

Since $e\in S^0(\R^{2n})$ is elliptic, see \eqref{eq_2_7_R_n}, there exists $h_0>0$ such that for all $0<h\le h_0$,  the inverse $e^w(x,hD_x)^{-1}$ exists and $e^w(x,hD_x)^{-1}\in \text{Op}^w_h(S^{0}(\R^{2n}))$. Therefore, we conclude from \eqref{eq_R_n_main_new} and \eqref{eq_2_12} that 
\begin{equation}
\label{eq_2_13}
(hD_{x_1}-a^w(x,hD_{x'}))\psi \chi^w(x,hD_x;h)\varphi u=h^2\tilde f+h B_1\tilde \varphi u + e^w(x,hD_x)^{-1}R_0u, 
\end{equation}
where
\begin{equation}
\label{eq_tilde_f}
\tilde f=e^w(x,hD_{x})^{-1}\psi  \chi^w(x,hD_x;h)\varphi f,
\end{equation}
\begin{equation}
\label{eq_tilde_B}
B_1=e^w(x,hD_x)^{-1} (\psi B-R_1^w(x,hD_{x};h)\varphi ).
\end{equation}
Now if  $\supp(\psi)\cap\pi_x(\supp(\chi_0))=\emptyset$, then using  \eqref{eq_wavefront_set}, we get 
\[
\psi(x)\chi^w(x,hD_x;h)=\mathcal{O}_{\mathcal{S}'(\R^n)\to \mathcal{S}(\R^n)}(h^\infty).
\]
Here $\pi_x: (x,\xi)\mapsto x$ is the projection. 
Modifying  $B_1\in \text{Op}_h^w(S^0(\R^{2n}))$ in \eqref{eq_2_13},  we can assume therefore that $\supp(\psi)$ is contained in a small neighborhood of $x_0$. 

Let $x_0=(x_{0,1},x_0')\in \R\times\R^{n-1}$ and let $(l_1,l_2)$ be an interval around   $x_{0,1}$, close to $x_{0,1}$,  so that $\pi_{x_1}(\supp (\psi))\subset (l_1,l_2)$.  By Duhamel's formula \eqref{eq_4_4} applied to \eqref{eq_2_13}, we get for $x_1\in (l_1,l_2)$,
\begin{equation}
\label{eq_2_14}
\begin{aligned}
(\psi & \chi^w (x,hD_x;h)\varphi u)(x_1,x')=i \int_{l_1}^{x_1} F(x_1,s)  (B_1\tilde \varphi u)(s,x')ds\\
&+ \frac{i}{h} \int_{l_1}^{x_1} F(x_1,s)  ((e^{w})^{-1}R_0 u)(s,x')ds +ih\int_{l_1}^{x_1} F(x_1,r)F(s,r)^*\tilde f(s,x')ds,
\end{aligned}
\end{equation}
 for all fixed $r\in \R$.
Here $\{F(x_1,r)\}_{x_1\in \R}$ is a family of unitary operators on $L^2(\R^{n-1})$ solving 
\begin{align*}
(hD_{x_1}-a^w(x,hD_{x'})) F(x_1,r)&=0,\\
F(r,r)&=I,
\end{align*}
for $x\in \R^n$ and for all $r\in \R$. The unitarity of $F(x_1,r)$ is a consequence of the fact that  $a^w(x,hD_{x'})$ is self-adjoint.

Let $0\le \tilde \chi\in C^\infty_0(T^*\R^{n-1})$ be  such that  $\tilde \chi=1$ near $\pi_{(x',\xi')}(\supp (\chi_0))$ and $\supp(\tilde \chi)$ is in a small neighborhood of $(x_0',\xi_0')$ so that the condition \eqref{eq_nondegenerate_a} holds on 
$\supp(\tilde \chi)$ for all $x_1$ in a neighborhood of $x_{0,1}$. Here 
\[
\pi_{(x',\xi')}:T^*\R^n\to T^*\R^{n-1},\quad (x_1,x',\xi_1,\xi')\mapsto (x',\xi').
\]
Hence, by the  composition formula of $h$-pseudodifferential operators \cite[Theorem 4.11]{Zworski_book}, see also \cite[Proposition 9.5]{Dimassi_Sjostrand_book}, we get
\begin{equation}
\label{eq_2_15}
(1- \tilde \chi^w(x',hD_{x'}))\psi \chi^w(x,hD_x;h)=\mathcal{O}_{\mathcal{S}'(\R^n)\to\mathcal{S}(\R^n)}(h^\infty). 
\end{equation}

Using \eqref{eq_2_15} and \eqref{eq_2_14}, we get  
\begin{equation}
\label{eq_2_16}
\begin{aligned}
(\psi & \chi^w (x,hD_x;h)\varphi u)(x_1,x')=i \int_{l_1}^{x_1} \tilde \chi^w(x',hD_{x'}) F(x_1,s)  (B_1\tilde \varphi u)(s,x')ds\\
&+ \frac{i}{h} \int_{l_1}^{x_1} \tilde \chi^w(x',hD_{x'}) F(x_1,s)  ((e^{w})^{-1}R_0 u)(s,x')ds\\
& +ih\int_{l_1}^{x_1} \tilde \chi^w(x',hD_{x'}) F(x_1,r)F(s,r)^*\tilde f(s,x')ds
+\mathcal{O}_{\mathcal{S}(\R^n)}(h^\infty)\|\varphi u\|_{L^2(\R^n)}.
\end{aligned}
\end{equation}

We shall next estimate $\|\psi \chi^w (x,hD_x;h)\varphi u\|_{L^{\frac{2n}{n-2}}(\R^n)}$.  First, a repeated application of Lemma \ref{lem_NSZ}, allows us to write 
\begin{align*}
\psi \chi^w (x,hD_x;h)\varphi u=&\rho^w(x_1,hD_{x_1}) \psi \chi^w (x,hD_x;h)\varphi u\\
&+\mathcal{O}_{\mathcal{S}(\R^n)}(h^\infty)\|\psi \chi^w (x,hD_x;h)\varphi u\|_{L^2(\R^n)},
\end{align*}
where $\rho\in C^\infty_0(T^*\R)$.  In view of Lemma \ref{lem_Zworski_lem_2_2}, we get  
\begin{equation}
\label{eq_2_16_2}
\begin{aligned}
\|\psi \chi^w (x,hD_x;h)\varphi u\|_{L^{\frac{2n}{n-2}}(\R^n)}\le &C h^{-\frac{1}{2n}} \|\psi \chi^w (x,hD_x;h)\varphi u\|_{L^{\frac{2n}{n-1}}_{x_1}L^{\frac{2n}{n-2}}_{x'}}\\
&+\mathcal{O}(h^\infty)\|\varphi u\|_{L^2(\R^n)}.
\end{aligned}
\end{equation}

Let us now proceed to estimate $\|\psi \chi^w (x,hD_x;h)\varphi u\|_{L^{\frac{2n}{n-1}}_{x_1}L^{\frac{2n}{n-2}}_{x'}}$ using the semiclassical Strichartz estimates \eqref{eq_3_4} and \eqref{eq_3_5}.  To that end, we start by estimating the first integral  in the right hand side of \eqref{eq_2_16},  
\[
J_1(x_1,x'):=i \int_{l_1}^{x_1} \tilde \chi^w(x',hD_{x'}) F(x_1,s)  (B_1\tilde \varphi u)(s,x')ds,\quad x_1\in (l_1,l_2).
\]
 Letting  
\begin{equation}
\label{eq_U}
U(x_1,s)=\textbf{1}_{[0,l_2-l_1]}(x_1-s)\tilde \chi^w(x',hD_{x'}) F(x_1,s),
\end{equation}
we have 
\[
J_1(x_1,x')=i \int_{l_1}^{x_1} U(x_1,s)   (B_1\tilde \varphi u)(s,x')  ds.
\]
Following \cite[the proof of Theorem 10.8]{Zworski_book}, we write
\begin{align*}
J_1(x_1,x')=i\int_{\R} \textbf{1}_{(l_1,l_2)}(s) \textbf{1}_{(-\infty,x_1)}(s)U(x_1,s)  (B_1\tilde \varphi u)(s,x')ds\\
=i\int_{\R} \textbf{1}_{(l_1,l_2)}(s) \textbf{1}_{(s,+\infty)}(x_1)U(x_1,s) (B_1\tilde \varphi u)(s,x')ds.
\end{align*}
Letting $q=\frac{2n}{n-2}$ and $p=\frac{2n}{n-1}$ and using Minkowski's inequality, we get 
\begin{align*}
\| J_1\|_{L^p_{x_1}L^q_{x'}} &\le \int_{\R} \|\textbf{1}_{(l_1,l_2)}(s) \textbf{1}_{(s,+\infty)}(x_1)U(x_1,s)  (B_1\tilde \varphi u)(s,x')\|_{L^p_{x_1}L^q_{x'}}ds\\
&\le \int_{l_1}^{l_2} \|U(x_1,s)(B_1\tilde \varphi u)(s,x')\|_{L^p_{x_1}L^q_{x'}}ds
\end{align*}

In view of the condition \eqref{eq_nondegenerate_a}, using the semiclassical Strichartz estimate \eqref{eq_3_4} with $k=n-1$, $q=\frac{2n}{n-2}$ and $p=\frac{2n}{n-1}$ in the last term of the above estimate, we obtain that 
\begin{equation}
\label{eq_2_16_3}
\begin{aligned}
\| J_1\|_{L^p_{x_1}L^q_{x'}}&\le Ch^{-1/p} \int_{l_1}^{l_2} \| (B_1\tilde \varphi u)(s,x')\|_{L^2_{x'}} ds\le Ch^{-1/p} \|  B_1\tilde \varphi u\|_{L^2_{x}}\\
&\le Ch^{-1/p} \|\tilde \varphi  u\|_{L^2_{x}}. 
\end{aligned}
\end{equation}
Here we have used the fact that the operator $B_1$ in \eqref{eq_2_13} is  bounded on $L^2(\R^n)$ uniformly in $h$ for all $h>0$ small enough. 

Similarly, we can estimate the second integral in the right hand side of  \eqref{eq_2_16},  
\[
J_2(x_1,x'):=\frac{i}{h} \int_{l_1}^{x_1} \tilde \chi^w(x',hD_{x'}) F(x_1,s)  ((e^{w})^{-1}R_0 u)(s,x')ds,
\]
obtaining the bound
\begin{equation}
\label{eq_J_2_add}
\| J_2\|_{L^p_{x_1}L^q_{x'}}\le \mathcal{O}(h^\infty)\|u\|_{L^2(\R^n)}.
\end{equation}
Here we have also used \eqref{eq_non_local_R_0}.

Let us now estimate the third integral in the right hand side \eqref{eq_2_16},  
\[
J_3(x_1,x'):=ih\int_{l_1}^{x_1} \tilde \chi^w(x',hD_{x'}) F(x_1,r)F(s,r)^*\tilde f(s,x')ds,
\]
 where $\tilde f$ is given by \eqref{eq_tilde_f}.  To that end we write
 \begin{equation}
 \label{eq_J_2}
 J_3(x_1,x')= J_{3,1}(x_1,x')+J_{3,2}(x_1,x'),
 \end{equation}
 where 
 \begin{equation}
 \label{eq_J_2_new_revised}
 \begin{aligned}
 J_{3,1}(x_1,x')=ih\int_{l_1}^{x_1} \tilde \chi^w(x',hD_{x'}) F(x_1,r)F(s,r)^* \tilde \chi^w(x',hD_{x'}) \tilde  f(s,x')ds, \\
 J_{3,2}(x_1,x')=ih\int_{l_1}^{x_1} \tilde \chi^w(x',hD_{x'}) F(x_1,s) (1- \tilde \chi^w(x',hD_{x'}))\tilde f(s,x')ds.
 \end{aligned}
 \end{equation}
 In view of \eqref{eq_2_15},
 \begin{equation}
 \label{eq_J_3_2_help}
 \begin{aligned}
 (1- &\tilde \chi^w(x',hD_{x'}))\tilde f=(1- \tilde \chi^w(x',hD_{x'}))  e^w(x,hD_x)^{-1}\psi \chi^w(x,hD_x;h)\varphi f\\
 &=h^{-2} (1- \tilde \chi^w(x',hD_{x'}))  e^w(x,hD_x)^{-1}\psi \chi^w(x,hD_x;h)\varphi (-h^2\Delta_g-(hz)^2)\tilde \varphi u\\
 &=\mathcal{O}_{\mathcal{S}(\R^{n})}(h^\infty)\|\tilde \varphi u\|_{L^2(\R^n)}.
 \end{aligned}
 \end{equation}
 Now writing 
 \[
 J_{3,2}(x_1,x')=ih\int_{l_1}^{x_1} U(x_1,s) (1- \tilde \chi^w(x',hD_{x'}))\tilde f(s,x')ds,
 \]
 where $U(x_1,s)$ is given by \eqref{eq_U}, and estimating $J_{3,2}$ similarly to $J_1$ above, using \eqref{eq_J_3_2_help}, we get 
\begin{equation}
\label{eq_J_2_2}
\|J_{3,2}\|_{L^{\frac{2n}{n-1}}_{x_1} L^{\frac{2n}{n-2}}_{x'}}=\mathcal{O}(h^\infty)\|\tilde \varphi u\|_{L^2(\R^n)}.
\end{equation}

We shall next estimate $J_{3,1}$. In doing so, we let $r=l_1$. 
Since $\tilde \chi (x',\xi')$ is real-valued, its Weyl quantization  $\chi^w(x',hD_{x'})$ is self-adjoint, and therefore, the $L^2$ adjoint of $U(s,l_1)$,  given by \eqref{eq_U}, is as follows,
 \[
 U(s,l_1)^*= \textbf{1}_{[0,l_2-l_1]}(s-l_1)F(s,l_1)^*\tilde \chi^w(x',hD_{x'}).
 \]  
Hence,  for $r=l_1$, we get
\[
  J_{3,1}(x_1,x')=ih\int_{-\infty}^{x_1} U(x_1,l_1)U(s,l_1)^*\tilde f(s,x')ds, \quad x_1\in (l_1,l_2).
 \]

By the semiclassical Strichartz estimate \eqref{eq_3_5} with $k=n-1$, $q=\tilde q=\frac{2n}{n-2}$ and $p=\tilde p=\frac{2n}{n-1}$, we obtain that 
\begin{equation}
\label{eq_2_17}
\|J_{3,1}\|_{L^{\frac{2n}{n-1}}_{x_1} L^{\frac{2n}{n-2}}_{x'}}\le C h^{\frac{1}{n}}\|\tilde f\|_{L^{\frac{2n}{n+1}}_{x_1} L^{\frac{2n}{n+2}}_{x'}}.
\end{equation}
Since $\tilde f$ is microlocalized in a compact subset of $T^*\R^n$, a repeated application of Lemma \ref{lem_NSZ} shows that 
\[
\tilde f=\rho^w(x_1,hD_{x_1}) \tilde f+\mathcal{O}_{\mathcal{S}(\R^n)}(h^\infty)\|\tilde f\|_{L^2(\R^n)},
\]
where $\rho\in C^\infty_0(T^*\R)$.  By Lemma \ref{lem_Zworski_lem_2_2} and the fact that 
\[
\tilde f=h^{-2}e^w(x,hD_{x})^{-1}\psi \chi^w(x,hD_x;h)\varphi (-h^2\Delta_g-(hz)^2)\tilde \varphi u,
\]
see \eqref{eq_tilde_f},
we get 
\begin{equation}
\label{eq_2_18}
\|\tilde f \|_{L^{\frac{2n}{n+1}}_{x_1} L^{\frac{2n}{n+2}}_{x'}}\le C h^{-\frac{1}{2n}}\|\tilde f\|_{L^{\frac{2n}{n+2}}(\R^n)}
+\mathcal{O}(h^\infty)\|\tilde \varphi u\|_{L^2(\R^n)},
\end{equation}
for all $0<h$ small enough. Using \eqref{eq_J_2}, \eqref{eq_2_17}, \eqref{eq_2_18},  \eqref{eq_J_2_2},  \eqref{eq_tilde_f}, and  the fact that 
$e^w(x,hD_{x})^{-1}\psi \chi^w(x,hD_x;h)\in \text{Op}_h^w(S^0(\R^{2n}))$
 is bounded on $L^p$ uniformly in $h$, for all $0<h$ small enough, see 
  \cite[Theorem 2.1]{Taylor_book_pseudo},
 we conclude that 
\begin{equation}
\label{eq_2_19}
\|J_3\|_{L^{\frac{2n}{n-1}}_{x_1} L^{\frac{2n}{n-2}}_{x'}}\le C h^{\frac{1}{2n}}\|    \varphi   f\|_{L^{\frac{2n}{n+2}}(\R^n)}+\mathcal{O}(h^\infty)\|\tilde \varphi u\|_{L^2(\R^n)}.
\end{equation}

By \eqref{eq_2_16}, \eqref{eq_2_16_3}, \eqref{eq_J_2_add} and \eqref{eq_2_19}, we get
\begin{align*}
 \|\psi \chi^w(x,hD_x;h)\varphi u\|_{L^{\frac{2n}{n-1}}_{x_1}L^{\frac{2n}{n-2}}_{x'}}\le& C h^{-\frac{1}{2}+\frac{1}{2n}} \|\tilde \varphi u\|_{L^2(\R^n)} +Ch^{\frac{1}{2n}}\|\varphi f\|_{L^{\frac{2n}{n+2}}(\R^n)}\\
 &+\mathcal{O}(h^\infty)\|u\|_{L^2(\R^n)},
\end{align*}
and therefore, using 
 \eqref{eq_2_16_2}, we obtain that 
\begin{equation}
\label{eq_2_20}
\begin{aligned}
\|\psi \chi^w(x,hD_x;h)\varphi u\|_{L^{\frac{2n}{n-2}}(\R^n)}\le &C h^{-\frac{1}{2}} \|\tilde \varphi u\|_{L^2(\R^n)}+C\|\varphi f\|_{L^{\frac{2n}{n+2}}(\R^n)}\\
&+\mathcal{O}(h^\infty)\|u\|_{L^2(\R^n)}.
\end{aligned}
\end{equation}

We shall return to the compact manifold $M$. In view of  \eqref{eq_2_11_def_on_R} and \eqref{eq_2_20}, we have 
\begin{equation}
\label{eq_2_20_2}
\begin{aligned}
\|\psi_i \text{Op}^w_h(\chi_j)\varphi_i u\|_{L^{\frac{2n}{n-2}}(M)}\le & C h^{-\frac{1}{2}} \| \tilde \varphi_i u\|_{L^2(M)}+C\| f\|_{L^{\frac{2n}{n+2}}(M)}\\
&+\mathcal{O}(h^\infty)\|u\|_{L^2(M)},
\end{aligned}
\end{equation}
for $i=1, \dots, L$ and $j=1,\dots, N$. By \eqref{eq_pseu_rest}
 and Sobolev's embedding, we get 
\[
(1-\psi_i)\text{Op}^w_h(\chi_j)\varphi_i =\mathcal{O}(h^\infty): L^2(M)\to L^{\frac{2n}{n-2}}(M),
\]
and therefore, summing over $i$, we obtain  that 
\[
\| \text{Op}^w_h(\chi_j)u\|_{L^{\frac{2n}{n-2}}(M)}\le C h^{-\frac{1}{2}} \| u\|_{L^2(M)}+C\| f\|_{L^{\frac{2n}{n+2}}(M)}.
\]
Hence, 
\begin{equation}
\label{eq_2_21}
\| \text{Op}^w_h(\chi)u\|_{L^{\frac{2n}{n-2}}(M)}\le C h^{-\frac{1}{2}} \| u\|_{L^2(M)}+C\| f\|_{L^{\frac{2n}{n+2}}(M)}.
\end{equation}

Let us now estimate 
$ \| (1- \text{Op}^w_h(\chi) )u\|_{L^{\frac{2n}{n-2}}(M)}$. 
To that end, using the equation \eqref{eq_2_2}, we get 
\begin{equation}
\label{eq_2_23}
(-h^2\Delta_g-1)(1- \text{Op}^w_h(\chi))u=h^2(1-\text{Op}^w_h(\chi)) f+h(1-\text{Op}^w_h(\chi))\alpha u+hLu,
\end{equation}
where $L=h^{-1}[h^2\Delta_g, \text{Op}^w_h(\chi)]\in \Psi^{-\infty}(M)$. 

For the semiclassical principal symbol $p_0(x,\xi)=|\xi|_g^2-1$ of the operator $-h^2\Delta_g-1$, we have $p_0(x,\xi)\ne 0$ on $\supp(1-\chi)$, and therefore, 
\[
|p_0(x,\xi)|\ge \frac{\langle \xi\rangle_g^2}{C}, \quad \langle \xi\rangle_g=\sqrt{1+|\xi|_g^2},
\]
for all $(x,\xi)\in \supp(1-\chi)$, i.e. the operator $-h^2\Delta_g-1$ is elliptic on $\supp(1-\chi)$. 
Hence,  there exists an operator $E\in \text{Op}^w_h(S^{-2}(T^*M))$ such that 
\begin{equation}
\label{eq_2_24}
E(-h^2\Delta_g-1)(1-\text{Op}^w_h(\chi))=1-\text{Op}^w_h(\chi)+R,
\end{equation}
where 
\begin{equation}
\label{eq_2_25}
R\in \cap_{N\ge 0, M\ge 0} h^N\text{Op}^w_h(S^{-M}).
\end{equation} 

Applying the operator $E$ to \eqref{eq_2_23} and using \eqref{eq_2_24}, we get 
\begin{equation}
\label{eq_2_26}
\begin{aligned}
(1-\text{Op}^w_h(\chi))u=&-Ru+h^2E(1-\text{Op}^w_h(\chi))f+hE(1-\text{Op}^w_h(\chi))\alpha u\\
&+hELu.
\end{aligned}
\end{equation}
It follows from \eqref{eq_2_25}  that 
\begin{equation}
\label{eq_2_27}
\|Ru\|_{L^{\frac{2n}{n-2}}(M)}=\mathcal{O}(h^\infty)\|u\|_{L^{\frac{2n}{n-2}}(M)},
\end{equation}
see \cite[Theorem 2.2]{Taylor_book_pseudo}.
As $E\in \text{Op}^w_h(S^{-2}(T^*M))$,  we have $E:L^{\frac{2n}{n-2}}(M)\to L^{\frac{2n}{n-2}}(M)$ is bounded uniformly for $0<h\ll 1$, see \cite[Theorem 2.2]{Taylor_book_pseudo},  and therefore, 
\begin{equation}
\label{eq_2_28}
\|hE(1-\text{Op}^w_h(\chi))\alpha u+hELu\|_{L^{\frac{2n}{n-2}}(M)}=\mathcal{O}(h)\|u\|_{L^{\frac{2n}{n-2}}(M)}.
\end{equation}
Furthermore, as 
\[
E: W^{-2,\frac{2n}{n-2}}_{\text{scl}}(M)\to L^{\frac{2n}{n-2}}(M)
\]
is uniformly bounded for $0<h\ll 1$, see \cite[Theorem 2.2]{Taylor_book_pseudo}, and from the semiclassical Sobolev embedding \eqref{eq_semiclass_emb_dual}, we obtain that 
\begin{equation}
\label{eq_2_29}
\begin{aligned}
h^2\| E(1- \text{Op}^w_h(\chi) )f\|_{L^{\frac{2n}{n-2}}(M)}&\le Ch^2\|(1- \text{Op}^w_h(\chi) )f\|_{W^{-2,\frac{2n}{n-2}}_{\text{scl}}(M)}\\
&\le C\|f\|_{L^{\frac{2n}{n+2}}(M)}.
\end{aligned}
\end{equation}
We conclude from \eqref{eq_2_26}, using \eqref{eq_2_27}, \eqref{eq_2_28} and \eqref{eq_2_29} that 
\begin{equation}
\label{eq_2_30}
\| (1- \text{Op}^w_h(\chi) )u\|_{L^{\frac{2n}{n-2}}(M)}\le \mathcal{O}(h)\|u\|_{L^{\frac{2n}{n-2}}(M)} +C\|f\|_{L^{\frac{2n}{n+2}}(M)}.
\end{equation}
It follows from \eqref{eq_2_21} and \eqref{eq_2_30} that 
\begin{equation}
\label{eq_2_30_2damp}
\|u\|_{L^{\frac{2n}{n-2}}(M)}\le C(h^{-\frac{1}{2}}\|u\|_{L^2(M)}+h\|u\|_{L^{\frac{2n}{n-2}}(M)} +\|f\|_{L^{\frac{2n}{n+2}}(M)}).
\end{equation}
By \eqref{eq_2_3}, we get 
\begin{align*}
\|u\|_{L^{\frac{2n}{n-2}}(M)}\le C \bigg( \frac{1}{\sqrt{2\varepsilon}}\|f\|_{L^{\frac{2n}{n+2}}(M)} &+\frac{\sqrt{\varepsilon}}{\sqrt{2}} \|u\|_{L^{\frac{2n}{n-2}}(M)}\\
&+ h\|u\|_{L^{\frac{2n}{n-2}}(M)} +\|f\|_{L^{\frac{2n}{n+2}}(M)} \bigg),
\end{align*}
and therefore, taking $\varepsilon>0$ and $h_0>0$ sufficiently small but fixed, we obtain that for all $h\in (0,h_0]$,
\[
\|u\|_{L^{\frac{2n}{n-2}}(M)}\le C \|f\|_{L^{\frac{2n}{n+2}}(M)}.
\]
This completes the proof of the uniform resolvent estimate \eqref{eq_resolvent_est_laplacian_z}  in the region where $\Im z=\delta$ and $|\Re z|\ge C$, for some $C>0$ large enough, and thus, 
the proof of Theorem \ref{thm_main}.

\section{Laplace operator. Proof of Remark \ref{rem_main}}

\label{sec_Laplace_Remark}

Let us first remark that in view of the Riesz--Thorin interpolation theorem,  the estimate \eqref{eq_rem_main} is a consequence of \eqref{eq_resolvent_est_laplacian_z} and the following endpoint bound, 
\begin{equation}
\label{eq_Lap_rev_0}
\|u\|_{L^{\frac{2(n+1)}{n-1}}(M)}\le C |z|^{-\frac{2}{n+1}}\|(-\Delta_g-z^2)u\|_{L^{\frac{2(n+1)}{n+3}}(M)},
\end{equation}
valid for $u\in C^\infty(M)$ and $z\in\Xi_\delta$.  In what follows we shall therefore concentrate on proving \eqref{eq_Lap_rev_0}. 
When doing so, we shall continue to use the same notation as in the previous sections, and we shall proceed by inspection of the arguments used in the proof of Theorem \ref{thm_main}.

\subsection{Analysis in the crucial spectral region} Here we assume that $\Im z=\delta>0$ and $|\Re z|$ is large, and then we see that similarly 
to the a priori estimate \eqref{eq_2_3_0}, we have  
\begin{equation}
\label{eq_Lap_rev_1}
\|u\|^2_{L^2(M)}\le \frac{h}{2\delta} \|f\|_{L^{\frac{2(n+1)}{n+3}}(M)}\|u\|_{L^{\frac{2(n+1)}{n-1}}(M)}.
\end{equation}

Next, we consider the formula \eqref{eq_2_16}, where we now have to estimate the norm $\|\psi \chi^w (x,hD_x;h)\varphi u\|_{L^{\frac{2(n+1)}{n-1}}(\R^n)}$.  Contrary to the proof of Theorem \ref{thm_main}, to that end, we do not need to use the semiclassical embedding of Lemma \ref{lem_Zworski_lem_2_2} but we rely on the semiclassical Strichartz estimates \eqref{eq_3_4} and \eqref{eq_3_5} only.  To bound the first integral in the right hand side of \eqref{eq_2_16}, using \eqref{eq_3_4} with $k=n-1$ and $p=q=\frac{2(n+1)}{n-1}$, we get 
\begin{equation}
\label{eq_Lap_rev_2}
\|J_1\|_{L^{\frac{2(n+1)}{n-1}}(\R^n)}\le C h^{-\frac{(n-1)}{2(n+1)}}\|\tilde \varphi u\|_{L^2(\R^n)},
\end{equation}
cf.  \eqref{eq_2_16_3}.  The estimate for the second integral in the right hand side of  \eqref{eq_2_16} is as follows, 
\begin{equation}
\label{eq_Lap_rev_3}
\| J_2\|_{L^{\frac{2(n+1)}{n-1}}(\R^n)}\le \mathcal{O}(h^\infty)\|u\|_{L^2(\R^n)},
\end{equation}
cf. \eqref{eq_J_2_add}.  To bound  the third integral in the right hand side \eqref{eq_2_16},  we shall estimate $J_{3,1}$ and $J_{3,2}$ given by \eqref{eq_J_2} and \eqref{eq_J_2_new_revised}. First we have 
\begin{equation}
\label{eq_Lap_rev_4}
\|J_{3,2}\|_{L^{\frac{2(n+1)}{n-1}}(\R^n)}=\mathcal{O}(h^\infty)\|\tilde \varphi u\|_{L^2(\R^n)},
\end{equation}
cf. \eqref{eq_J_2_2}.  Using the semiclassical Strichartz estimate \eqref{eq_3_5} with $k=n-1$, $p=q=\tilde p=\tilde q=\frac{2(n+1)}{n-1}$, we get  
\begin{equation}
\label{eq_Lap_rev_5}
\|J_{3,1}\|_{L^{\frac{2(n+1)}{n-1}}(\R^n)}\le C h^{\frac{2}{n+1}}\|\varphi f\|_{L^{\frac{2(n+1)}{n+3}}(\R^n)},
\end{equation}
cf. \eqref{eq_2_17}.  By \eqref{eq_Lap_rev_2}, \eqref{eq_Lap_rev_3}, \eqref{eq_Lap_rev_4}, \eqref{eq_Lap_rev_5}, we obtain that 
\begin{align*}
\|\psi \chi^w (x,hD_x;h)\varphi u\|_{L^{\frac{2(n+1)}{n-1}}(\R^n)}&\le C h^{-\frac{(n-1)}{2(n+1)}}\|\tilde \varphi u\|_{L^2(\R^n)}\\
&+ C h^{\frac{2}{n+1}}\| \varphi f\|_{L^{\frac{2(n+1)}{n+3}}(\R^n)}+\mathcal{O}(h^\infty)\| u\|_{L^2(\R^n)},
\end{align*}
cf. \eqref{eq_2_20}.  Returning to the compact manifold $M$, we conclude that 
\begin{equation}
\label{eq_Lap_rev_6}
\| \text{Op}^w_h(\chi)u\|_{L^{\frac{2(n+1)}{n-1}}(M)}\le C h^{-\frac{(n-1)}{2(n+1)}}\|u\|_{L^2(M)}+ C h^{\frac{2}{n+1}}\|f\|_{L^{\frac{2(n+1)}{n+3}}(M)},
\end{equation}
cf. \eqref{eq_2_21}.

Let us now estimate 
$ \| (1- \text{Op}^w_h(\chi) )u\|_{L^{\frac{2(n+1)}{n-1}}(M)}$. To this end, we shall examine the expression \eqref{eq_2_26}. When doing so we rely on the fact that $E\in \text{Op}^w_h(S^{-2}(T^*M))$,  and thus, for any $s\in [0,2]$, and $1<p<\infty$, we have
\begin{equation}
\label{eq_Lap_rev_7}
E: W^{-s,p}_{\text{scl}}(M)\to L^{p}(M)
\end{equation}
is uniformly bounded for $0<h\ll 1$, see \cite[Theorem 2.5, page 268]{Taylor_book_pseudo},

Using \eqref{eq_Lap_rev_7} with $s=\frac{2n}{n+1}$ and the semiclassical Sobolev embedding \eqref{eq_semiclass_emb_dual},  we obtain that 
\begin{equation}
\label{eq_Lap_rev_8}
\begin{aligned}
h^2\| E(1- \text{Op}^w_h(\chi) )f\|_{L^{\frac{2(n+1)}{n-1}}(M)}&\le Ch^2\|(1- \text{Op}^w_h(\chi) )f\|_{W^{-\frac{2n}{n+1},\frac{2(n+1)}{n-1}}_{\text{scl}}(M)}\\
&\le Ch^{\frac{2}{n+1}}\|f\|_{L^{\frac{2(n+1)}{n+3}}(M)},
\end{aligned}
\end{equation}
cf. \eqref{eq_2_29}. We also have  
\begin{equation}
\label{eq_Lap_rev_9}
\|hE(1-\text{Op}^w_h(\chi))\alpha u+hELu\|_{L^{\frac{2(n+1)}{n-1}}(M)}=\mathcal{O}(h)\|u\|_{L^{\frac{2(n+1)}{n-1}}(M)},
\end{equation}
cf. \eqref{eq_2_28}, and 
\begin{equation}
\label{eq_Lap_rev_10}
\|Ru\|_{L^{\frac{2(n+1)}{n-1}}(M)}=\mathcal{O}(h^\infty)\|u\|_{L^{\frac{2(n+1)}{n-1}}(M)},
\end{equation}
cf. \eqref{eq_2_27}.
We conclude from \eqref{eq_2_26}, using \eqref{eq_Lap_rev_8}, \eqref{eq_Lap_rev_9} and \eqref{eq_Lap_rev_10}  that 
\begin{equation}
\label{eq_Lap_rev_11}
\| (1- \text{Op}^w_h(\chi) )u\|_{L^{\frac{2(n+1)}{n-1}}(M)}\le \mathcal{O}(h)\|u\|_{L^{\frac{2(n+1)}{n-1}}(M)} +Ch^{\frac{2}{n+1}}\|f\|_{L^{\frac{2(n+1)}{n+3}}(M)},
\end{equation}
cf. \eqref{eq_2_30}.

Now it follows from \eqref{eq_Lap_rev_6} and \eqref{eq_Lap_rev_11} that 
\[
\|u\|_{L^{\frac{2(n+1)}{n-1}}(M)}\le C \bigg(h^{-\frac{(n-1)}{2(n+1)}}\|u\|_{L^2(M)}+h\|u\|_{L^{\frac{2(n+1)}{n-1}}(M)}  + h^{\frac{2}{n+1}}\|f\|_{L^{\frac{2(n+1)}{n+3}}(M)}\bigg),
\]
cf. \eqref{eq_2_30_2damp}, and therefore, by \eqref{eq_Lap_rev_1} and the Peter--Paul inequality, we get 
\begin{align*}
\|u\|_{L^{\frac{2(n+1)}{n-1}}(M)}\le C \bigg(h^{\frac{1}{n+1}} \|f\|^{1/2}_{L^{\frac{2(n+1)}{n+3}}(M)} \|u\|^{1/2}_{L^{\frac{2(n+1)}{n-1}}(M)}\\
 +h\|u\|_{L^{\frac{2(n+1)}{n-1}}(M)} + h^{\frac{2}{n+1}}\|f\|_{L^{\frac{2(n+1)}{n+3}}(M)}\bigg)\\
 \le C  \bigg( \frac{h^{\frac{2}{n+1}}}{2\varepsilon}\|f\|_{L^{\frac{2(n+1)}{n+3}}(M)} +\frac{\varepsilon}{2} \|u\|_{L^{\frac{2(n+1)}{n-1}}(M)} \\
+ h\|u\|_{L^{\frac{2(n+1)}{n-1}}(M)} + h^{\frac{2}{n+1}}\|f\|_{L^{\frac{2(n+1)}{n+3}}(M)}. 
 \bigg)
\end{align*}
 Taking $\varepsilon>0$ and $h_0>0$ sufficiently small but fixed, we obtain that for all $h\in (0,h_0]$,
\[
\|u\|_{L^{\frac{2(n+1)}{n-1}}(M)}\le Ch^{\frac{2}{n+1}}\|f\|_{L^{\frac{2(n+1)}{n+3}}(M)}.
\]
This completes the proof of the resolvent estimate \eqref{eq_Lap_rev_0}  in the region where $\Im z=\delta$ and $|\Re z|\ge C$, for some $C>0$ large enough. 

\subsection{Easy spectral regions} In the region where $\Im z=\delta$ and $|\Re z|\le C$, the estimate \eqref{eq_Lap_rev_0}  is a consequence of the uniform estimate \eqref{eq_resolvent_est_laplacian_z}, in view of the embeddings
\begin{equation}
\label{eq_Lap_rev_embedding_new}
L^{\frac{2n}{n-2}}(M)\subset L^{\frac{2(n+1)}{n-1}}(M)\subset L^2(M)\subset L^{\frac{2(n+1)}{n+3}}(M)\subset L^{\frac{2n}{n+2}}(M).
\end{equation}
When $z\in \Xi_\delta$ and $\Im z\ge 2 |\Re z|$, we see that $\Re(z^2)=(\Re z)^2-(\Im z)^2<0$, and therefore,
\begin{equation}
\label{eq_Lap_rev_12}
\|(-\Delta_g-z^2)^{-1}\|_{L^2(M)\to L^2(M)}=\frac{1}{\text{dist}(z^2,\text{Spec}(-\Delta_g))}=\frac{1}{|z|^2}. 
\end{equation}
In this region the estimate \eqref{eq_Lap_rev_0} follows therefore by the Riesz--Thorin interpolation theorem between  \eqref{eq_Lap_rev_12} and \eqref{eq_resolvent_est_laplacian_z}. To establish the estimate \eqref{eq_Lap_rev_0}  in the remaining regions given by $\delta<\Im z<2\Re z$ and $\delta<\Im z<-2\Re z$, we apply the Phragm\'en--Lindel\"of principle  to the holomorphic function
\[
z\mapsto z^{\frac{2}{n+1}}((-\Delta_g-z^2)^{-1}u, v)_{L^2(M)}, \quad u,v\in C^\infty(M),
\] 
in these regions. The proof of Remark \ref{rem_main} is complete.

\section{Damped wave equation. Proof of Theorem \ref{thm_main_2}}

\label{sec_Damped}

Here we shall revisit the proof of Theorem \ref{thm_main}, to prove Theorem \ref{thm_main_2}.  For $u\in C^\infty(M)$, we write
\begin{equation}
\label{eq_dw_1}
Pu=P(\tau)u=(-\Delta_g+2i\tau a(x)-\tau^2)u=f.
\end{equation}
 As in the case of the Laplacian, the proof of the estimate
\eqref{eq_resolvent_est_damped_wave_equation} will consist of several different cases, depending on the location of the spectral parameter $\tau$ in the region $\Pi_{\delta,V}$ of the complex plane, defined by  \eqref{eq_resolvent_est_damped_wave_equation_region}. Let us start with the most significant region. 

\subsection{Spectral Region I}  Assume that $\tau\in \C$ is such that $A_+ +\delta\le \Im \tau:=\beta\le \sup a+\delta$ and $|\Re\tau|$ is large.  Then it will be convenient to make a semiclassical reduction in \eqref{eq_dw_1} so that we let 
\[
|\Re \tau|=\frac{1}{h}, 
\]
where $0< h\ll 1$ is a semiclassical parameter. It follows from \eqref{eq_dw_1} that 
\begin{equation}
\label{eq_dw_2}
h^2P u=(-h^2\Delta_g-1 -h(\pm 2i\beta-h\beta^2)+2i h (\pm 1+ih\beta)a(x))u=h^2 f.
\end{equation}
We shall consider the case $\Re \tau>0$, as the other case can be treated in the same way. 

A crucial step now is the derivation of an a priori estimate, which is similar to the estimate \eqref{eq_2_3_0} in the case of the Laplacian.  Once this estimate has been established, the rest of the proof will follow along the same lines as the proof of Theorem \ref{thm_main}. When proving the a priori estimate, following an argument of \cite[Section 2]{Sjostrand_2000}, we shall conjugate the operator $h^2P$ by an elliptic self-adjoint operator $Q=\text{Op}_h^w(e^q)$, where $q\in S^0(T^*M)$ is to be chosen. Notice that for all $h>0$ small enough, we have $Q^{-1}=\text{Op}_h^w(e^{-q})+hR_0$ where $R_0\in \text{Op}_h^w(S^{-1})$. Letting $p(x,\xi)$ be  the semiclassical principal symbol of $-h^2\Delta_g$, by Proposition \ref{prop_composition_of_operators}, we get
\begin{align*}
Q^{-1}(-h^2\Delta_g) Q&=-h^2\Delta_g +Q^{-1}[-h^2\Delta_g, Q]\\
&=-h^2\Delta_g +Q^{-1}\bigg(\frac{h}{i}\text{Op}_h^w(e^q H_p(q))+h^2R_1\bigg)\\
&=-h^2\Delta_g -ih\text{Op}_h^w(H_p(q))+h^2R_2,
\end{align*}
and 
\[
Q^{-1} a(x) Q=a(x)+hR_3,
\]
with $R_1,R_2\in \text{Op}_h^w(S^0)$ and $R_3\in \text{Op}_h^w(S^{-1})$,  and therefore, 
\begin{equation}
\label{eq_conj_op}
\begin{aligned}
Q^{-1}(h^2P)Q=-h^2\Delta_g -1+ih\text{Op}^w_h(   2 a(x)-H_p(q))-2i h\beta+h^2R_4, 
\end{aligned}
\end{equation}
with $R_4\in \text{Op}_h^w(S^0)$. 
It also follows from \eqref{eq_dw_2} that 
\begin{equation}
\label{eq_damp_3_Q}
Q^{-1}(h^2P)Qv=h^2Q^{-1}f, 
\end{equation}
where 
\begin{equation}
\label{eq_def_v_damp}
v=Q^{-1}u.
\end{equation}

Setting 
\[
\tilde q(x,\xi)=\int_0^T\bigg(\frac{t}{T}-1\bigg) a(\text{exp}(t H_p)(x,\xi))dt+\int_{-T}^0\bigg(1+\frac{t}{T}\bigg) a(\text{exp}(t H_p)(x,\xi))dt,
\]
we check that 
\[
2 a(x)-H_p(\tilde q)= 2 \langle a\rangle_T\quad \text{on}\quad T^*M. 
\]
Let us choose $\varepsilon>0$ sufficiently small but fixed, and let $\varphi\in C^\infty_0(T^*M)$ be such that $\varphi=1$ on $p^{-1}([1-\varepsilon, 1+\varepsilon])$. Setting $q=\varphi\tilde q$, we see that $q\in S^0(T^*M)$ and 
\begin{equation}
\label{eq_damp_3_0_form}
2 a(x)-H_p(q)= 2 \langle a\rangle_T\quad \text{on}\quad p^{-1}((1-\varepsilon, 1+\varepsilon)). 
\end{equation}

It follows from \eqref{eq_conj_op}  and \eqref{eq_damp_3_0_form} that 
\begin{equation}
\label{eq_conj_op_1}
\begin{aligned}
Q^{-1}(h^2P)Q=-h^2\Delta_g -1+ih\text{Op}_h( \hat a_T)-2i h \beta+h^2R_4,
\end{aligned}
\end{equation}
where $\hat a_T\in S^0(T^*M)$ is such that 
\begin{equation}
\label{eq_damp_3}
\hat a_T=2\langle a\rangle_T \quad \text{on}\quad p^{-1}((1-\varepsilon, 1+\varepsilon)).
\end{equation}

Let $1-\varepsilon<E<1+\varepsilon$. Then by the homogeneity property of the $H_p$-flow, we have
\[
\sup_{p^{-1}(E)}\langle a \rangle_T=\sup_{p^{-1}(1)}\langle a\rangle_{\sqrt{E}T},
\]
and therefore, 
\[
\lim_{T\to \infty}\sup_{p^{-1}(E)}\langle a \rangle_T=\lim_{T\to\infty} \sup_{p^{-1}(1)}\langle a\rangle_{\sqrt{E}T}=A_+,
\] 
locally uniformly in $E>0$.  Hence, choosing $T$ sufficiently large but fixed, depending on $\delta>0$, we get 
\begin{equation}
\label{eq_damp_4}
\langle a \rangle_T(x,\xi)\le A_++\frac{\delta}{2}, 
\end{equation}
for all $(x,\xi)\in p^{-1}([1-\varepsilon, 1+\varepsilon])$. 

 It follows from \eqref{eq_conj_op_1} that 
\begin{equation}
\label{eq_damp_1}
\begin{aligned}
\Im(Q^{-1}(h^2P)Q)=h\Re \text{Op}_h^w(\hat a_T)-2h\beta+h^2R_5,
\end{aligned}
\end{equation}
where $R_5\in \text{Op}_h^w(S^0)$. Using Proposition \ref{prop_composition_of_operators} and the fact that $\hat a_T$ is real-valued, we get 
\begin{equation}
\label{eq_damp_2}
\begin{aligned}
\Re \text{Op}_h^w(\hat a_T)=\frac{1}{2} \big(\text{Op}_h^w(\hat a_T)+ \text{Op}_h^w(\hat a_T)^* \big)=\text{Op}_h^w(\hat a_T)+hR_6,
\end{aligned}
\end{equation}
where $R_6\in \text{Op}_h^w(S^{-1})$. We conclude from \eqref{eq_damp_1} and \eqref{eq_damp_2} that 
\begin{equation}
\label{eq_damp_5}
\Im(Q^{-1}(h^2P)Q)=h\text{Op}_h^w(\hat a_T-2\beta)+h^2R_7,
\end{equation}
where $R_7\in \text{Op}_h^w( S^0)$. 

Using the fact that $\beta\ge A_++\delta$, and \eqref{eq_damp_3}, \eqref{eq_damp_4}, we get
\[
2\beta-\hat a_T=2(\beta-\langle a\rangle_T)\ge \delta\quad  \text{on}\quad p^{-1}((1-\varepsilon, 1+\varepsilon)).
\]
Let $0\le \chi\in C^\infty_0(p^{-1}((1-\varepsilon,1+\varepsilon)))$ be such that $\chi=1$ near $p^{-1}(1)$. An application of the semiclassical  microlocalized version of  G{\aa}rding's inequality, see Theorem \ref{thm_Garding_ineq}, gives 
\begin{equation}
\label{eq_damp_6}
\begin{aligned}
\big(  \text{Op}_h^w(2\beta-\hat a_T) \text{Op}_h^w(\chi)v, \text{Op}_h^w(\chi)v\big)_{L^2(M)}\ge &\frac{\delta}{2}\| \text{Op}_h^w(\chi)v\|^2_{L^2(M)}\\
&-\mathcal{O}(h^\infty)\|v\|^2_{L^2(M)},
\end{aligned}
\end{equation}
for all $0<h$ small enough. 

Using \eqref{eq_damp_5} and \eqref{eq_damp_6}, we obtain that
\begin{equation}
\label{eq_damp_8_0}
\begin{aligned}
\frac{\delta h}{4}\| \text{Op}_h^w(\chi)v\|^2_{L^2(M)}&-\mathcal{O}(h^\infty)\|v\|^2_{L^2(M)} \\
&\le -\big(\Im(Q^{-1}(h^2P)Q) \text{Op}_h^w(\chi)v, \text{Op}_h^w(\chi)v\big)_{L^2(M)}\\
&= -\Im \big((Q^{-1}(h^2P)Q) \text{Op}_h^w(\chi)v, \text{Op}_h^w(\chi)v\big)_{L^2(M)}\\
&\le \big|\big(Q^{-1}(h^2P)Q \text{Op}_h^w(\chi)v, \text{Op}_h^w(\chi)v\big)_{L^2(M)}\big|,
\end{aligned}
\end{equation}
for all $0<h$ small enough. 

In view of \eqref{eq_damp_3_Q} we have
\begin{equation}
\label{eq_damp_8_0_1}
Q^{-1}(h^2P)Q \text{Op}_h^w(\chi)v= h^2 \text{Op}_h^w(\chi) Q^{-1}f+ [Q^{-1}(h^2P)Q, \text{Op}_h^w(\chi)]v.
\end{equation}
Hence, by  H\"older's inequality and the uniform boundedness of the operators $\text{Op}_h^w(\chi)$ and  $Q^{-1}$ in $L^p$ spaces with  $1<p<\infty$, we get 
\begin{equation}
\label{eq_damp_8_1}
h^2\big|\big(  \text{Op}_h^w(\chi) Q^{-1}f, \text{Op}_h^w(\chi)v\big)_{L^2(M)}\big|\le \mathcal{O}(h^2)\|f\|_{L^{\frac{2n}{n+2}}(M)}\|v\|_{L^{\frac{2n}{n-2}}(M)}.
\end{equation}

To estimate the scalar product $\big|\big(  [Q^{-1}(h^2P)Q, \text{Op}_h^w(\chi)]v,  \text{Op}_h^w(\chi)v \big)_{L^2(M)}\big|$ involving the commutator, we shall argue as follows. Let $\chi_1\in C^\infty_0(p^{-1}((1-\varepsilon, 1+\varepsilon)))$ be such that $\chi_1=1$ near $p^{-1}(1)$ and such that $\supp(\chi_1)$ is contained in the interior of the set where $\chi=1$. 
By \cite[Appendix A]{Sjostrand_Vodev_1997}, see also \cite[Theorem 9.5]{Zworski_book}, we know that $\text{WF}_h([Q^{-1}(h^2P)Q, \text{Op}_h^w(\chi)])$ is a compact subset of $\supp(\chi)$ such that
\[
\text{WF}_h([Q^{-1}(h^2P)Q, \text{Op}_h^w(\chi)])\cap \{(x,\xi): \chi(x,\xi)=1\}^{\circ}=\emptyset,
\]
where $\{\cdot\}^{\circ}$ denotes the interior of the set. 
Hence, 
\[
\text{WF}_h([Q^{-1}(h^2P)Q, \text{Op}_h^w(\chi)])\cap \text{WF}_h(\text{Op}_h^w(\chi_1))=\emptyset,
\]
and therefore, 
\[
[Q^{-1}(h^2P)Q, \text{Op}_h^w(\chi)]\text{Op}_h^w(\chi_1)=\mathcal{O}(h^\infty): H^{s_1}(M)\to H^{s_2}(M),
\]
for any $s_1, s_2\in \R$. Thus, 
\begin{equation}
\label{eq_damp_7}
\begin{aligned}
\big|\big(&  [Q^{-1}(h^2P)Q, \text{Op}_h^w(\chi)]v,  \text{Op}_h^w(\chi)v \big)_{L^2(M)}\big|\le \mathcal{O}(h^\infty)\|v\|_{L^2(M)}^2\\
&+ \big|\big( [Q^{-1}(h^2P)Q, \text{Op}_h^w(\chi)] (1-\text{Op}_h^w(\chi_1))v,  \text{Op}_h^w(\chi)v \big)_{L^2(M)}\big|.
\end{aligned}
\end{equation}
In view of \eqref{eq_conj_op}, we know that the operator $Q^{-1}(h^2P)Q$ is elliptic on $\supp(1-\chi_1)$, and thus, there exists a parametrix $E\in \text{Op}_h^w(S^{-2}(T^*M))$ such that 
\[
E Q^{-1}(h^2P)Q=1-\text{Op}_h^w(\chi_1)+R,
\]
where 
\[
R\in \cap_{N\ge 0, M\ge 0}h^N\text{Op}_h^w(S^{-M}).
\]
Applying $E$ to  \eqref{eq_damp_3_Q}, we get
\begin{equation}
\label{eq_damp_8_3}
(1-\text{Op}_h^w(\chi_1))v=h^2 EQ^{-1}f-Rv.
\end{equation}
Using \eqref{eq_damp_8_3} together with the fact that 
\[
 [Q^{-1}(h^2P)Q, \text{Op}_h^w(\chi)]\in h\text{Op}_h^w(S^{-\infty}),
\]
we get  
\begin{equation}
\begin{aligned}
\label{eq_damp_8}
\big|\big( [Q^{-1}(h^2P)Q,& \text{Op}_h^w(\chi)] (1-\text{Op}_h^w(\chi_1))v,  \text{Op}_h^w(\chi)v \big)_{L^2(M)}\big|\\
&\le h^2 |\big( [Q^{-1}(h^2P)Q, \text{Op}_h^w(\chi)]   EQ^{-1}f  ,  \text{Op}_h^w(\chi)v \big)_{L^2(M)}\big|\\
&+\big|\big( [Q^{-1}(h^2P)Q, \text{Op}_h^w(\chi)] Rv,  \text{Op}_h^w(\chi)v \big)_{L^2(M)}\big|
\\
&\le \mathcal{O}(h^3)\|f\|_{L^{\frac{2n}{n+2}}(M)}\|v\|_{L^{\frac{2n}{n-2}}(M)}+\mathcal{O}(h^\infty)\|v\|_{L^2(M)}^2.
\end{aligned}
\end{equation}
We conclude from \eqref{eq_damp_7} and \eqref{eq_damp_8} that
\begin{equation}
\label{eq_damp_8_2}
\begin{aligned}
\big|\big(  [Q^{-1}(h^2P)Q, \text{Op}_h^w(\chi)]v,  \text{Op}_h^w(\chi)v \big)_{L^2(M)}\big|&\le \mathcal{O}(h^\infty)\|v\|_{L^2(M)}^2\\
&+\mathcal{O}(h^3)\|f\|_{L^{\frac{2n}{n+2}}(M)}\|v\|_{L^{\frac{2n}{n-2}}(M)}.
\end{aligned}
\end{equation}
It follows from \eqref{eq_damp_8_0}, \eqref{eq_damp_8_0_1}, \eqref{eq_damp_8_1}, and \eqref{eq_damp_8_2} that 
\begin{equation}
\label{eq_damp_9}
\| \text{Op}_h^w(\chi)v\|^2_{L^2(M)}\le \mathcal{O}(h)\|f\|_{L^{\frac{2n}{n+2}}(M)}\|v\|_{L^{\frac{2n}{n-2}}(M)}  +\mathcal{O}(h^\infty)\|v\|^2_{L^2(M)}. 
\end{equation}
Using the fact that the operator $Q^{-1}(h^2P)Q$ is elliptic on $\supp(1-\chi)$, we get 
\eqref{eq_damp_8_3} with $\chi$ in place of $\chi_1$. This implies that 
\begin{equation}
\label{eq_damp_10}
\begin{aligned}
\|(1-\text{Op}_h^w(\chi))v\|^2_{L^2(M)}&\le h^2 |(EQf, (1-\text{Op}_h^w(\chi))v)_{L^2(M)}|\\
&+|(Rv, (1-\text{Op}_h^w(\chi))v)_{L^2(M)}|\\
& \le \mathcal{O}(h^2)\|f\|_{L^{\frac{2n}{n+2}}(M)}\|v\|_{L^{\frac{2n}{n-2}}(M)} 
+\mathcal{O}(h^\infty)\|v\|^2_{L^2(M)}.
\end{aligned}
\end{equation}
The estimates \eqref{eq_damp_9} and \eqref{eq_damp_10} yield the following a priori estimate,
\begin{equation}
\label{eq_damp_11}
\|v\|^2_{L^2(M)}\le \mathcal{O}(h)\|f\|_{L^{\frac{2n}{n+2}}(M)}\|v\|_{L^{\frac{2n}{n-2}}(M)}, 
\end{equation}
for all $h>0$ small enough, which is similar to \eqref{eq_2_3_0} in the case of the Laplacian. 

Now as a consequence of 
\eqref{eq_damp_3_Q} and \eqref{eq_conj_op_1}, we obtain the following equality, which is similar to \eqref{eq_2_2},
\[
(-h^2\Delta_g-1)v=h^2Q^{-1}f+h \text{Op}_h^w(r_7)v, 
\]
where $r_7=2i (A_+ +\delta)-i\hat a_T-hr_3\in S^0(T^*M)$. Relying on the microlocal factorization of the semiclassical principal symbol of  the operator $-h^2\Delta_g-1$, the semiclassical Strichartz estimates, and a parametrix in the elliptic region, similarly to the discussion of the crucial spectral region in the proof of  Theorem \ref{thm_main}, we obtain the estimate
\[
\|v\|_{L^{\frac{2n}{n-2}}(M)}\le C(h^{-\frac{1}{2}}\|v\|_{L^2(M)}+h\|v\|_{L^{\frac{2n}{n-2}}(M)} +\|f\|_{L^{\frac{2n}{n+2}}(M)}),
\]
which is the same as the estimate \eqref{eq_2_30_2damp} in the proof of Theorem \ref{thm_main}. Using the a priori estimate \eqref{eq_damp_11}, we get that for all $h>0$ small enough, 
\[
\|v\|_{L^{\frac{2n}{n-2}}(M)}\le C \|f\|_{L^{\frac{2n}{n+2}}(M)}.
\]
This together with \eqref{eq_def_v_damp} completes the proof of the uniform resolvent estimate \eqref{eq_resolvent_est_damped_wave_equation}  in the spectral region where  $A_++\delta\le \Im \tau\le \sup a+\delta$ and $|\Re \tau|\ge L_1$, for some $L_1>0$ large enough. 

\subsection{Spectral region II}  Let $\tau\in \C$ be such that  $\inf a-\delta\le \Im \tau\le A_--\delta$ and $|\Re\tau|$ is large. This region can be treated in the same way as the first region. 

\subsection{Spectral region III} Let $\tau\in \C$ be such that $|\Re\tau|\le \frac{1}{2}|\Im\tau|$ and $|\Im \tau|$ sufficiently large, depending on the damping coefficient $a$. Multiplying \eqref{eq_dw_1} by $\overline{u}$, integrating by parts and taking the real part, we get
\begin{equation}
\label{eq_spec_III_new}
\begin{aligned}
\|\nabla_g u\|_{L^2(M)}^2+((\Im\tau)^2-(\Re\tau)^2)&\|u\|_{L^2(M)}^2\\
&=2\Im \tau \int a|u|^2dV_g+\Re(f,u)_{L^2(M)},
\end{aligned}
\end{equation}
where $dV_g$ is the Riemannian volume element of $M$. Using that $|\Re\tau|\le \frac{1}{2}|\Im\tau|$, we obtain that 
\begin{align*}
\|\nabla_g u\|_{L^2(M)}^2+\frac{3}{4}(\Im\tau)^2&\|u\|_{L^2(M)}^2\\
&\le 2|\Im \tau| \|a\|_{L^\infty(M)}\|u\|^2_{L^2(M)}+\|f\|_{H^{-1}(M)}\|u\|_{H^1(M)}.
\end{align*}
Assuming that $|\Im \tau|\ge L_2$, where $L_2>0$ is sufficiently large constant, depending on $\|a\|_{L^\infty(M)}$, so that 
\[
|\Im \tau|\bigg(\frac{3}{4}|\Im \tau|-2\|a\|_{L^\infty}\bigg)\ge 1,
\] 
we get $\|u\|_{H^1(M)}\le \|f\|_{H^{-1}(M)}$, and therefore, by Sobolev's embedding, we obtain  the uniform resolvent estimate \eqref{eq_resolvent_est_damped_wave_equation} in this region.  

\subsection{Spectral region IV} Let $L=\max\{L_1,L_2\}$, and  let $V$ be an open neighborhood of the set $\text{Spec}(P(\tau))\cap \{\tau\in \C: |\Re \tau|\le L, |\Im \tau|\le 2L\}$. Then the set $K=\{\tau\in \C: |\Re \tau|\le L, |\Im \tau|\le 2L\}\setminus V$ is compact. Once we know that the following uniform estimate holds,
\begin{equation}
\label{eq_damp_12}
\| (-\Delta_g+2i a(x)\tau -\tau^2)^{-1}\|_{H^{-1}(M)\to H^1(M)}\le C,
\end{equation}
for all $\tau\in K$ with a constant $C>0$, independent of $\tau$, the uniform resolvent estimate \eqref{eq_resolvent_est_damped_wave_equation}  in the case $\tau\in K$ is a consequence of \eqref{eq_damp_12} and  Sobolev's embedding.  To show \eqref{eq_damp_12}, first observe that the operator 
\[
P(\tau)=-\Delta_g+2i a(x)\tau -\tau^2: H^1(M)\to H^{-1}(M)
\]
is Fredholm of index zero, and it follows from the analytic Fredholm theory that the inverse $P(\tau)^{-1}$ exists for $\tau\in \C\setminus\text{Spec}(P(\tau))$ and moreover, 
\[
\C\setminus\text{Spec}(P(\tau))\ni \tau\mapsto P(\tau)^{-1}\in \mathcal{L}(H^{-1}(M),H^{1}(M))
\]
is holomorphic. Hence,  the function
\[
\C\setminus\text{Spec}(P(\tau))\ni \tau \mapsto \|P^{-1}(\tau)\|_{\mathcal{L}(H^{-1}(M), H^{1}(M))}
\]
is continuous, and thus, bounded on the compact set $K$.  The uniform estimate  \eqref{eq_damp_12} follows.

\subsection{Spectral region V} Let us finally discuss the remaining four portions of the spectral $\tau$--plane, 
\begin{align*}
\Sigma_1&=\{\tau\in \C:  \sup a+\delta< \Im \tau< 2 \Re \tau, \Re\tau\ge  L\}, \\
\Sigma_2&=\{ \tau\in \C:   \sup a+\delta<\Im \tau<-2\Re \tau, \Re\tau\le - L\},\\
\Sigma_3&=\{\tau \in \C:  -2\Re \tau<\Im\tau<\inf a-\delta, \Re \tau\ge L\},\\
\Sigma_4&=\{\tau\in \C:  2\Re\tau<\Im \tau<\inf a-\delta, \Re\tau\le -L\}. 
\end{align*}
Here the following $L^2$ resolvent estimates for the stationary damped wave operator, obtained by integration by parts, will be important,
\begin{align}
\label{eq_resolvent_damp_L_2}
\| (-\Delta_g+2i a(x)\tau-\tau^2)^{-1}f\|_{L^2(M)}\le& \frac{1}{2|\Re  \tau|(\Im \tau -\sup a)}\|f\|_{L^2(M)}, \nonumber \\
&\Re\tau\ne 0,\quad \Im \tau>\sup a, \\
\| (-\Delta_g+2i a(x)\tau-\tau^2)^{-1}f\|_{L^2(M)}\le& \frac{1}{2|\Re  \tau|(\inf a- \Im \tau)}\|f\|_{L^2(M)}, \nonumber \\
&\Re\tau\ne 0,\quad \Im \tau<\inf a \nonumber.
\end{align}

Without loss of generality we shall consider the region $\Sigma_1$. Let $u,v\in C^\infty(M)$ be fixed and let 
\[
F(\tau)=( (-\Delta_g+2i a(x)\tau-\tau^2)^{-1} u,v  )_{L^2(M)}, \quad \tau\in \Sigma_1. 
\]
Then the function $F(\tau)$ is analytic in $\Sigma_1$ and continuous in $\overline{\Sigma}_1$. Furthermore, for any 
$\tau\in \Sigma_1$, using \eqref{eq_resolvent_damp_L_2}, we get
\[
|F(\tau)|\le \frac{1}{2L\delta}\|u\|_{L^2(M)}\|v\|_{L^2(M)}.
\]
We have also shown that for any $\tau\in \p \Sigma_1$, the following estimate holds,
\begin{equation}
\label{eq_damp_14}
|F(\tau)|\le C\|u\|_{L^\frac{2n}{n+2}(M)}\|v\|_{L^\frac{2n}{n+2}(M)},
\end{equation}
with $C$ being independent of $\tau$, and thus, by the Phragm\'en-Lindel\"of  principle, we have the estimate \eqref{eq_damp_14} for all $\tau\in \Sigma_1$.  Hence, for any $\tau\in \Sigma_1$,  we have
\begin{align*}
\|(-\Delta_g+&2i a(x)\tau-\tau^2)^{-1} u\|_{L^\frac{2n}{n-2}(M)}\\
&=\sup_{v\in C^\infty(M),v\ne 0}\frac{| ( (-\Delta_g+2i a(x)\tau-\tau^2)^{-1} u,v  )_{L^2(M)}|}{\|v\|_{L^{\frac{2n}{n+2}}(M)}}
\le C\|u\|_{L^\frac{2n}{n+2}(M)}, 
\end{align*}
which completes the proof of  the uniform resolvent estimate \eqref{eq_resolvent_est_damped_wave_equation} for $\tau\in \Sigma_1$. The proof of Theorem \ref{thm_main_2} is complete.

\section{Damped wave equation. Proof of Remark \ref{rem_main_2} }
\label{sec_damped_Remark}

Let us first observe that to establish  the estimate \eqref{eq_rem_main_2} it suffices to prove the following bound, 
\begin{equation}
\label{eq_damp_rev_0}
\|u\|_{L^{\frac{2(n+1)}{n-1}}(M)}\le C |\tau |^{-\frac{2}{n+1}}\|P(\tau) u\|_{L^{\frac{2(n+1)}{n+3}}(M)},
\end{equation}
valid for $u\in C^\infty(M)$ and $\tau\in\Pi_{\delta,V}$. When discussing the derivation of \eqref{eq_damp_rev_0}, we shall use the same notation as in Section \ref{sec_Damped}.

\subsection{Spectral Region I}  Here we assume that $\tau\in \C$ is such that $A_+ +\delta\le \Im \tau:=\beta\le \sup a+\delta$ and $|\Re\tau|$ is large.  The discussion in Sections \ref{sec_Damped} and \ref{sec_Laplace_Remark} shows that to establish \eqref{eq_damp_rev_0} in this region, it suffices to obtain the following a priori estimate 
\begin{equation}
\label{eq_damp_rev_1}
\|u\|^2_{L^2(M)}\le \mathcal{O}(h) \|f\|_{L^{\frac{2(n+1)}{n+3}}(M)}\|u\|_{L^{\frac{2(n+1)}{n-1}}(M)},
\end{equation}
cf. \eqref{eq_damp_11}, which follows by a straightforward inspection of the conjugation argument in Section \ref{sec_Damped}. 

\subsection{Spectral Region II}  Let $\tau\in \C$ be such that  $\inf a-\delta\le \Im \tau\le A_--\delta$ and $|\Re\tau|$ is large. This region can be treated in the same way as the first region. 

\subsection{Spectral region III} Let $\tau\in \C$ be such that $|\Re\tau|\le \frac{1}{2}|\Im\tau|$ and $|\Im \tau|$ sufficiently large, depending on the damping coefficient $a$. Here the estimate \eqref{eq_damp_rev_0} follows by the Riesz--Thorin interpolation theorem between the uniform estimate  \eqref{eq_resolvent_est_damped_wave_equation}  and the following $L^2$ bound for the resolvent,
\begin{equation}
\label{eq_damp_rev_2}
\|P(\tau)^{-1}\|_{L^2(M)\to L^2(M)}=\mathcal{O}\bigg(\frac{1}{|\tau|^2}\bigg). 
\end{equation}
When  checking \eqref{eq_damp_rev_2}, we observe that  \eqref{eq_spec_III_new} implies that 
 \[
 \frac{3}{4}(\Im \tau)^2\|u\|^2_{L^2(M)}\le 2|\Im \tau|\|a\|_{L^\infty}\|u\|^2_{L^2(M)}+\|f\|_{L^2(M)}\|u\|_{L^2(M)}.
 \]
Assuming that $|\Im \tau|\ge 4\|a\|_{L^\infty}:=L_2$, we get 
\[
\frac{1}{4}|\Im \tau |^2\|u\|_{L^2(M)}\le \|f\|_{L^2(M)},
\]
showing \eqref{eq_damp_rev_2}. 

\subsection{Spectral region IV}  The estimate \eqref{eq_damp_rev_0} in the compact spectral region $K=\{\tau\in \C: |\Re \tau|\le L, |\Im \tau|\le 2L\}\setminus V$  follows from the uniform estimate  \eqref{eq_resolvent_est_damped_wave_equation}  and the embedding \eqref{eq_Lap_rev_embedding_new}. 

\subsection{Spectral region V} Here the estimate \eqref{eq_damp_rev_0} is obtained by an application of the Phragm\'en--Lindel\"of principle  to the holomorphic function
\[
\tau\mapsto \tau^{\frac{2}{n+1}}(P(\tau)^{-1}u, v)_{L^2(M)}, \quad u,v\in C^\infty(M)
\] 
in this region. 
 The proof of Remark \ref{rem_main_2} is complete. 

\section{Note added in proof}
After submitting a revised version of this paper, we learned that in the paper "Endpoint resolvent estimates for compact Riemannian manifolds" by R. Frank and L. Schimmer, Journal of Functional Analysis, to appear, an alternative proof of the estimate \eqref{eq_rem_main}, in the endpoint case $p =2(n+1)/(n+3)$, is given, by a completely different method.

\section*{Acknowledgements}
N.B. and K.K. would like to thank Setsuro Fujii\'e for the very kind invitation and hospitality at the Ritsumeikan University, Kyoto, in July 2014, where this work was initiated.  D. DSF. and K.K. would also like to thank the Institut Henri Poincar\'e, Paris, where the work was completed. 
We are very grateful to the referees for helpful suggestions and remarks. We are particularly indebted to one of the referees for the 
very interesting suggestion to generalize our estimates, by establishing also the bounds in Remark \ref{rem_main_2} and Remark \ref{rem_main}, 
which has improved the paper. We are furthermore most grateful to this referee for pointing out the work \cite{Koch_Tataru_1995} to us, which has led to Remark \ref{rem_main_2_1}. The research of N.B. is partially supported by  the Agence Nationale de la Recherche through ANR-13-BS01-0010-03 (ANA\'E)  and  201-BS01019 01 (NOSEVOL). The research of D.DSF is partially supported by the Agence Nationale de la Recherche through ANR-13-JS01-0006 (\textit{iproblems}). The research of K.K. is partially supported by the National Science Foundation (DMS 1500703).

\end{document}